\documentclass[12pt,twoside]{amsart}
\usepackage{amssymb}
\usepackage{verbatim}
\usepackage{amsmath}
\usepackage{bm}
\usepackage{a4wide}
\usepackage[latin1]{inputenc}
\usepackage[T1]{fontenc}
\usepackage{times}
\usepackage{amssymb,latexsym}
\usepackage{enumerate}
\newcommand{\op} {\overline{\partial}}
\newcommand{\dbar}{\ensuremath{\overline\partial}}

\makeatletter
\newcommand{\sumprime}{\if@display\sideset{}{'}\sum%
            \else\sum'\fi}
\makeatother

\begin{document}

\numberwithin{equation}{section}

\newtheorem{theorem}{Theorem}[section]
\newtheorem{proposition}[theorem]{Proposition}
\newtheorem{conjecture}[theorem]{Conjecture}
\def\theconjecture{\unskip}
\newtheorem{corollary}[theorem]{Corollary}
\newtheorem{lemma}[theorem]{Lemma}
\newtheorem{observation}[theorem]{Observation}
\newtheorem{definition}{Definition}
\numberwithin{definition}{section} 
\newtheorem{remark}{Remark}
\def\theremark{\unskip}
\newtheorem{kl}{Key Lemma}
\def\thekl{\unskip}
\newtheorem{question}{Question}
\def\thequestion{\unskip}
\newtheorem{example}{Example}
\def\theexample{\unskip}
\newtheorem{problem}{Problem}

\address{DEPARTMENT OF MATHEMATICAL SCIENCES, NORWEGIAN UNIVERSITY OF SCIENCE AND TECHNOLOGY, NO-7491 TRONDHEIM, NORWAY}
\email{xu.wang@ntnu.no}

\title[Alexandrov-Fenchel inequality]{A remark on the Alexandrov-Fenchel inequality}

 \author{Xu Wang}
\date{\today}

\begin{abstract} In this article, we give a complex-geometric proof of the Alexandrov-Fenchel inequality without using toric compactifications. The idea is to use the Legendre transform and develop the Brascamp-Lieb proof of the Pr\'ekopa theorem. New ingredients in our proof include an integration of Timorin's mixed Hodge-Riemann bilinear relation and a mixed norm version of H\"ormander's $L^2$-estimate, which also implies a non-compact version of the Khovanski\u{i}-Teissier inequality. 

\bigskip

\noindent{{\sc Mathematics Subject Classification} (2010): 32A25, 53C55.}

\smallskip

\noindent{{\sc Keywords}:  Brunn-Minkowski inequality, Alexandrov-Fenchel inequality, Brascamp-Lieb proof, Khovanski\u{i}-Teissier inequality, Hodge theory, complete K\"ahler manifold.}
\end{abstract}
\maketitle

\section{Introduction}

The classical Brunn-Minkowski inequality is an inequality on the volumes of convex bodies in $\mathbb R^n$. It plays an important role in many branches of mathematics, to quote from Gardner's survey article \cite{Gardner}: "In a sea of mathematics, the Brunn-Minkowski inequality appears like an octopus, tentacles reaching far and wide...". A far reaching generalization of it is the Alexandrov-Fenchel inequality, which has many different proofs (see section 20.3 in \cite{BZ}).  In 1936, Alexandrov found a combinatorial proof and an analytic proof. The later is a generalization of Hilbert's 1910 proof ("Minkowskis Theorie von Volumen und Oberfl\"ache") of the Brunn-Minkowski inequality. A simple algebraic proof (see \cite{KK12} and \cite{KK12-1}) based on the Bernstein-Kushnirenko theorem and the intersection theory on quasi-projective variety was given by Kaveh and Khovanski\u{i} around 2008. For other interesting proofs and related results, see \cite{GMTZ}, \cite{MR13}, \cite{DN06} and \cite{Cattani08}, to cite only a few. The Brunn-Minkowski inequality also has a functional version, i.e. the Pr\'ekopa theorem \cite{Prekopa73} for convex functions, which was found by Pr\'ekopa in 1973. In 1976 \cite{BL76},  Brascamp and Lieb gave another proof of the Pr\'ekopa theorem, the main idea is to use the Brascamp-Lieb lemma (see Lemma \ref{le:BLf-new}) to reduce the Pr\'ekopa theorem to a weighted $L^2$-estimate of H\"ormander type \cite{Hormander65} (so called the Brascamp-Lieb inequality)
for the minimal solution $u$ of 
$$
du=v.
$$
In 1998, by a magic way of using  H\"ormander's $\dbar$-$L^2$ estimate \cite{Hormander65}, Berndtsson \cite{B98} proved a complex version of the Pr\'ekopa theorem for plurisubharmonic functions. In 2005, inspired by \cite{BBN}, Cordero-Erausquin \cite{Co} discovered the relation between Berndtsson's work and the Brascamp-Lieb proof. 
Shortly after that, a very general and useful theory (so called the complex Brunn-Minkowski theory) \cite{Bern09, Bern06} behind the Brascamp-Lieb proof and Maitani-Yamaguchi's result \cite{MY04} was established by Berndtsson. The main result in that theory is a deep and beautiful curvature formula for a certain direct image bundle, which has found many highly non-trivial applications in K\"ahler geometry and algebraic geometry,  see \cite{Bern09, BPW,Bern-CBM, Bern-notes, Bo-notes} and references therein. Inspired by \cite{Wang-k} and Berndtsson's theory, in this paper we obtain a new complex-geometric proof of the Alexandrov-Fenchel inequality. The main idea is that the Brascamp-Lieb lemma (see Lemma \ref{le:BLf-new}) reduces the Alexandrov-Fenchel inequality to an  $L^2$-estimate $||u|| \leq ||\theta||$ on $\mathbb R^n \times (\mathbb R^n/\mathbb Z^n)$ for the minimal solution of
$$
du=(d^c)^*\theta, \ d^c:=i\dbar-i\partial,
$$
with respect to Timorin's mixed norm (see \cite{Timorin98} and \cite{Wang17}). The main advantage of this approach is that  \emph{we can prove the $L^2$-estimate $||u|| \leq ||\theta||$ directly, without using the compactification theory.} In fact, by H\"ormander's $L^2$-theory \cite{Hormander66, Demailly82},  it is enough to construct a special complete K\"ahler metric on $\mathbb R^n \times (\mathbb R^n/\mathbb Z^n)$ (Lemma \ref{le:complete}). Another advantage is  that the $L^2$-estimate $||u|| \leq ||\theta||$ is true on a large class of non-compact manifolds, not only on $\mathbb R^n \times (\mathbb R^n/\mathbb Z^n)$. In \cite{Gromov} (p 21), Gromov suggested to study non-compact generalizations of the Khovanski\u{i}-Teissier inequality. Our approach generalizes the Khovanski\u{i}-Teissier inequality
 to the following:
 
\begin{theorem}\label{th:1}  Let $(X, \hat \omega)$ be an $n$-dimensional complete K\"ahler manifold with finite volume. Let $\alpha_{1}, \cdots, \alpha_n$ be smooth $d$-closed semi-positive $(1,1)$-forms  such that $\alpha_{j}\leq \hat \omega$ on $X$ for every $1\leq j\leq n$. Assume that $n\geq 2$. Put
$$
T:=\alpha_{3} \wedge \cdots \wedge \alpha_n,\ T:=1, \ \text{if}\ n=2.
$$
Then
$$
\left(\int_X \alpha_{1}\wedge \alpha_2\wedge T \right)^2 \geq \left(\int_{X} \alpha_{1}^2\wedge T\right)\left(\int_X \alpha_{2}^2\wedge T\right).
$$
\end{theorem}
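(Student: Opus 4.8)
The plan is to reduce the asserted reverse Cauchy--Schwarz inequality to a single \emph{mixed Hodge--Riemann} statement and then to establish that statement by an $L^2$-estimate of H\"ormander type. First I would regularize: replacing each $\alpha_j$ by $\alpha_j+\varepsilon\hat\omega$ preserves every hypothesis (closed, semi-positive, dominated by $(1+\varepsilon)\hat\omega$, finite volume, completeness of the ambient metric) and makes the forms strictly positive, so that $\alpha_1^2\wedge T>0$ pointwise; the general case then follows by letting $\varepsilon\to 0$, the convergence of all integrals being guaranteed by $\alpha_j\le\hat\omega$ together with finite volume. Writing $a=\int_X\alpha_1^2\wedge T>0$, $b=\int_X\alpha_1\wedge\alpha_2\wedge T$ and $c=\int_X\alpha_2^2\wedge T$, I set $\lambda=b/a$ and $\beta=\alpha_2-\lambda\alpha_1$, a real $d$-closed $(1,1)$-form satisfying $\int_X\alpha_1\wedge\beta\wedge T=0$ and $\int_X\beta^2\wedge T=c-b^2/a$. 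Thus the theorem is equivalent to the implication: if $\beta$ is real, $d$-closed and $\int_X\alpha_1\wedge\beta\wedge T=0$, then $\int_X\beta^2\wedge T\le 0$.

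Next I would use, at each point, the Lefschetz splitting $\beta=\beta_P+f\,\alpha_1$ with $f=(\beta\wedge\alpha_1\wedge T)/(\alpha_1^2\wedge T)$, so that $\beta_P$ is pointwise primitive, $\alpha_1\wedge\beta_P\wedge T=0$. Timorin's pointwise mixed Hodge--Riemann bilinear relation (the linear-algebra input, see \cite{Timorin98, Wang17}) then gives $\beta_P^2\wedge T\le 0$ at every point. Since the cross-term integrand $f\,\alpha_1\wedge\beta_P\wedge T$ vanishes identically, integrating yields
\[
\int_X\beta^2\wedge T=\int_X\beta_P^2\wedge T+\int_X f^2\,\alpha_1^2\wedge T .
\]
The global constraint $\int_X\alpha_1\wedge\beta\wedge T=0$ becomes $\int_X f\,\alpha_1^2\wedge T=0$, i.e. $f$ has mean zero for the measure $\mu=\alpha_1^2\wedge T$, while $d\beta=0$ forces the coupling $df\wedge\alpha_1=-d\beta_P$. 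Everything is therefore reduced to the single estimate
\[
\int_X f^2\,\alpha_1^2\wedge T\le-\int_X\beta_P^2\wedge T ,
\]
which is exactly the mixed-norm inequality $\|u\|\le\|\theta\|$ of the introduction with $u=f$ and $\theta=\beta_P$, the $1$-form identity $du=(d^c)^*\theta$ being the dual form of $df\wedge\alpha_1=-d\beta_P$ under the K\"ahler identities.

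Finally I would prove this last estimate, and this is where I expect the real work to lie. On a \emph{compact} K\"ahler manifold it is immediate from $L^2$-Hodge theory, but here $X$ is only complete of finite volume, so the existence of the minimal (mean-zero) solution of $du=(d^c)^*\theta$ and the integration by parts in the Bochner--Kodaira--Nakano identity must be justified directly. The device is H\"ormander's $L^2$-theory: I would take the minimal solution $u$, expand the relevant \dbar-Laplacian in Timorin's mixed norm, and read off $\|u\|\le\|\theta\|$ from the non-negativity of the mixed curvature term (again a manifestation of the pointwise mixed positivity). The bound $\alpha_j\le\hat\omega$ makes all the mixed norms finite, the mean-zero condition encodes minimality — orthogonality to the constants, the only obstruction in degree zero on a finite-volume manifold — and \textbf{the crucial point is completeness}: the special complete K\"ahler metric furnished by Lemma \ref{le:complete} supplies the exhaustion cutoffs that render the integration by parts exact and legitimize H\"ormander's estimate on the non-compact $X$. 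Letting $\varepsilon\to 0$ then gives the stated inequality, and the same argument yields the more general non-compact Khovanski\u{i}--Teissier statement.
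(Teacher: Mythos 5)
Your proposal is sound in outline and its analytic core coincides with the paper's, but the packaging is genuinely different. The paper proves the stronger convexity statement (Theorem \ref{th:main}) along the segment $\omega_t=t\alpha_1+(1-t)\alpha_2$: the Brascamp--Lieb lemma (Lemma \ref{le:1-new}, justified on non-compact $X$ by the polynomiality of $t\mapsto e^{-f}$, which is what legitimizes differentiating under the integral) gives $f_{tt}=\int_X \bigl(G_t-(G-E_\mu(G))^2\bigr)d\mu$; Timorin's $T$-Hodge theory (Lemma \ref{le:2-new}) identifies the two terms as the mixed norms $\|\theta\|^2_{T,\omega}$ and $\|G-E_\mu(G)\|^2_{T,\omega}$; and the mixed-norm H\"ormander estimate (Theorem \ref{th:Hormander-new} via Lemma \ref{le:3-new}) closes the argument, Theorem \ref{th:1} following by Lemma \ref{le:one-h}. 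You instead make a one-shot Hodge-index reduction at $\omega=\alpha_1$: subtract $\lambda\alpha_1$, split $\beta=\beta_P+f\alpha_1$ pointwise, invoke Timorin's pointwise negativity $\beta_P^2\wedge T\le 0$, and reduce to a single estimate for the mean-zero $f$. This is essentially the Dinh--Nguy\^en formulation \cite{DN06} of the mixed Hodge--Riemann relation, and it is a correct equivalent route: your splitting is literally the paper's Definition \ref{de:theta1} with $\theta_0=\beta_P$, $\theta_1=f$, $G=-2f$, and your target $\int_X f^2\,\alpha_1^2\wedge T\le-\int_X\beta_P^2\wedge T$ is exactly equivalent to $\|G-E_\mu(G)\|_{T,\omega}\le\|\theta\|_{T,\omega}$ once one expands $\|\theta\|^2_{T,\omega}=-\int_X\theta_0^2\wedge T+\int_X\theta_1^2\,\omega^2\wedge T$ and cancels the non-primitive contribution. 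What your route buys is brevity and no segment differentiation for the $m=2$ case; what it loses is the general-$m$ convexity statement that the segment argument yields (equivalent for $m=2$ by Lemma \ref{le:one-h}, but not beyond).

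Two concrete corrections. First, your ``crucial point'' invokes Lemma \ref{le:complete} to supply the exhaustion cutoffs: that lemma plays no role in Theorem \ref{th:1}. It only constructs a complete metric on $\mathbb R^n\times\mathbb T^n$ to verify the hypotheses in the classical Alexandrov--Fenchel application; for a general $X$ the cutoffs come from the completeness \emph{hypothesis} itself (the exhaustion $\rho$ with $|d\rho|_{\hat\omega}\le1$), and after your regularization $\alpha_1\ge\varepsilon\hat\omega$ is itself complete, which is what allows Theorem \ref{th:Hormander-new} to be applied with $\hat\omega=\omega$. Second, the identification ``$u=f$, $\theta=\beta_P$, $du=(d^c)^*\theta$'' is not literally right: the correct equation, via $T\wedge G=-\Lambda(T\wedge\beta)$ and the $T$-K\"ahler identity $[d,\Lambda]=(d^c)^*$, is $d(T\wedge 2f)=(d^c)^*(T\wedge\beta)$ with the \emph{full} $\beta$ on the right, since $(d^c)^*$ applied to the primitive part alone produces extra commutator terms involving $B=[L,\Lambda]$; the inequality you want then drops out as above. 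Relatedly, the deepest step is not ``non-negativity of a mixed curvature term'' --- there is no curvature positivity in the paper's mechanism. What actually runs is the $T$-version of the Bochner--Kodaira identity $\|d(T\wedge\alpha)\|^2+\|d^*(T\wedge\alpha)\|^2=\|d^c(T\wedge\alpha)\|^2+\|(d^c)^*(T\wedge\alpha)\|^2$ plus a Riesz-representation construction, and the genuinely delicate point is Lemma \ref{le:last-1}, the vanishing $d^*d(T\wedge\beta)\equiv0$, whose cutoff proof requires comparing the mixed $T$-norm with the usual $\hat\omega$-norm (Lemmas \ref{le:c1} and \ref{le:c2} of the appendix). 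You flagged the right location for the hard work, but your sketch would need that vanishing lemma and the norm comparison, not a curvature sign, to close.
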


\textbf{Remark}: The above theorem can be seen as a special case of our main result (Theorem \ref{th:main}). Recall that a Hermitian manifold $(X, \hat \omega)$ is said to be \emph{complete} if there exists a smooth function, say 
$$
\rho: X\to [0, \infty),
$$
such that $\rho^{-1}([0, c])$ is compact for every $c>0$ and
$$
|d\rho|_{\hat \omega} (x) \leq 1, \ \forall \ x\in X.
$$
In order to deduce the classical Alexandrov-Fenchel inequality from Theorem \ref{th:1}, we construct a special complete K\"ahler metric on $\mathbb R^n \times (\mathbb R^n/\mathbb Z^n)$ in Lemma \ref{le:complete}. The whole paper is organized as follows.

\tableofcontents

\emph{Acknowledgement}: The author would like to thank Professor Bo Berndtsson for many inspiring discussions on the Alexandrov-Fenchel inequality and related topics. Thanks are also given to Professor Bo-Yong Chen and Professor Qing-Chun Ji for their constant support and encouragement. Last but not least, thanks are due to the referee for many helpful suggestions. 
The author was partially supported by the Knut and Alice Wallenberg Foundation and the Onsager fellowship.

\section{Preliminaries}

\subsection{Basic notions in convex geometry}

\begin{enumerate}
\item A set $\Omega$ in $\mathbb R^n$ is said to be \emph{convex} if the line segment between any two points in $\Omega$ lies in $\Omega$.

\item We call a \emph{compact convex set, say $A$, with non-empty interior}, say $A^\circ$, in $\mathbb R^n$ a \emph{convex body}. 
\end{enumerate}

Let $A_0$, $A_1$ be two convex bodies in $\mathbb R^n$. We call 
\begin{equation*}
A_0+A_1:=\{a_0+a_1 \in \mathbb R^n: a_0\in A_0, \ a_1\in A_1\},
\end{equation*}
the \emph{Minkowski sum} of $A_0$ and $A_1$. The Brunn-Minkowski theorem (see \cite{Gardner} for a nice survey) reads as follows:

\begin{theorem}[Brunn-Minkowski inequality] $|A_0+A_1|^{1/n} \geq |A_0|^{1/n} +|A_1|^{1/n}$, where the absolute value of a convex body means its volume (Lebesgue measure).  
\end{theorem}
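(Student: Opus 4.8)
The plan is to reduce the stated dimensional (additive) inequality to its dimension-free multiplicative form, and then to establish the latter by a functional argument in the spirit of the Pr\'ekopa theorem discussed in the introduction.

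First I would record the homogeneity $|tA| = t^n|A|$ for $t>0$ and use it to reduce the claim to the multiplicative inequality
$$
|(1-\lambda)A_0 + \lambda A_1| \geq |A_0|^{1-\lambda}\,|A_1|^{\lambda}, \qquad \lambda \in (0,1).
$$
Given this, one sets $\lambda = |A_1|^{1/n}/\bigl(|A_0|^{1/n}+|A_1|^{1/n}\bigr)$ and applies it to the normalized bodies $A_j/|A_j|^{1/n}$; this is legitimate because a convex body has nonempty interior, hence positive volume. Writing $c = |A_0|^{1/n}+|A_1|^{1/n}$, a direct computation gives $(1-\lambda)\bigl(A_0/|A_0|^{1/n}\bigr) + \lambda\bigl(A_1/|A_1|^{1/n}\bigr) = (A_0+A_1)/c$, while the normalized bodies have volume $1$; the multiplicative inequality then reads $c^{-n}|A_0+A_1| \geq 1$, which is exactly $|A_0+A_1| \geq c^n$, the desired claim.

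Second, I would prove the multiplicative form via the Pr\'ekopa-Leindler inequality: if $f,g,h\colon \mathbb{R}^n \to [0,\infty)$ are measurable and satisfy $h((1-\lambda)x+\lambda y) \geq f(x)^{1-\lambda}g(y)^{\lambda}$ for all $x,y$, then $\int_{\mathbb{R}^n} h \geq \bigl(\int f\bigr)^{1-\lambda}\bigl(\int g\bigr)^{\lambda}$. Taking $f=\mathbf 1_{A_0}$, $g=\mathbf 1_{A_1}$ and $h=\mathbf 1_{(1-\lambda)A_0+\lambda A_1}$ verifies the hypothesis (if $x\in A_0$ and $y\in A_1$ then $(1-\lambda)x+\lambda y$ lies in the Minkowski combination, so $h=1$; otherwise the right-hand side vanishes) and yields precisely the multiplicative inequality. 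The Pr\'ekopa-Leindler inequality itself I would obtain by induction on $n$: the one-dimensional case rests on the elementary $1$-dimensional Brunn-Minkowski estimate for the superlevel sets, and the inductive step follows by integrating in a single variable via Fubini, applying the $(n-1)$-dimensional statement to the slices together with the one-dimensional case in the remaining direction.

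The main obstacle is the base of this induction, namely the one-dimensional estimate $|S_0+S_1| \geq |S_0|+|S_1|$ for measurable $S_0,S_1\subset\mathbb{R}$, which one proves by translating so that $\sup S_0 = \inf S_1$, after which $S_0$ and $S_1$ sit essentially disjointly inside $S_0+S_1$; everything else (the rescaling reduction and the Fubini step) is routine. As a fully elementary alternative that bypasses the functional formalism and even the reduction above, one can establish the additive form directly by the Hadwiger-Ohmann scheme: verify it for boxes, where dividing through reduces it to the arithmetic-geometric mean inequality applied to the ratios $a_i/(a_i+b_i)$ and $b_i/(a_i+b_i)$; extend it to finite unions of boxes by a hyperplane-separation induction on the number of boxes; and pass to arbitrary convex bodies by approximating from inside. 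In either route the convexity of $A_0$ and $A_1$ enters only to ensure positive volume and measurability of the Minkowski sum.
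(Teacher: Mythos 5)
Your argument is correct, but it is a genuinely different proof from the one the paper supplies. At the point where the theorem is stated the paper gives no proof (it points to Gardner's survey \cite{Gardner}); its de facto proof is the $T=1$, $m=n$ specialization of Theorem \ref{th:main}, carried out for the Brunn-Minkowski case in Section 5.1: Lemma \ref{le:one-h} converts the additive inequality into convexity of $t\mapsto -\log|tA_1+(1-t)A_2|$, the Legendre transform (Propositions \ref{pr:partial-convex-convex} and \ref{pr:phi+phi} with Lemma \ref{le:28-new}) represents $|A_t|$ as $\int_{\mathbb{R}^n\times\mathbb{T}^n}(dd^c\phi)^n/n!$ for $\phi=t\psi_1+(1-t)\psi_2$, the Brascamp-Lieb lemma (Lemma \ref{le:BLf-new}) computes the second derivative, and the decisive estimate $||G-E_\mu(G)||\leq||\theta||$ is a H\"ormander-type $L^2$-estimate (Theorem \ref{th:Hormander-new}) on the complete K\"ahler metric constructed in Lemma \ref{le:complete}. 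You instead take the classical convex-geometry route: the homogeneity reduction to the multiplicative form is computed correctly (with $c=|A_0|^{1/n}+|A_1|^{1/n}$ one indeed gets $(1-\lambda)(A_0/|A_0|^{1/n})+\lambda(A_1/|A_1|^{1/n})=(A_0+A_1)/c$), the indicator-function verification of the Pr\'ekopa-Leindler hypothesis is right, and the Hadwiger-Ohmann alternative is a sound fallback; your closing observation that convexity enters only through positive volume and measurability is consistent with the paper's own remark that the inequality extends to compact non-convex sets \cite{Ly}. What your route buys is brevity, self-containedness, and this extra generality for non-convex compacta; what the paper's heavier route buys is scalability: the same Legendre-transform/Brascamp-Lieb/H\"ormander scheme, upgraded by Timorin's $T$-Hodge theory, yields the full Alexandrov-Fenchel inequality and a non-compact Khovanski\u{i}-Teissier inequality (Theorem \ref{th:1}), which your dimensional induction cannot reach. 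One small point of hygiene in your one-dimensional base step: for merely measurable $S_0,S_1\subset\mathbb{R}$ the suprema need not be attained and $S_0+S_1$ need not be measurable, so the translation argument should be run on compact subsets and concluded by inner regularity; for the compact bodies at hand this is harmless.
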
 

\textbf{Remark}: The Brunn-Minkowski inequality is also true for compact \emph{non-convex} sets with non-empty interior, see \cite{Ly}.

\medskip

We will also need the following notion in convex geometry. 

\begin{definition}[Legendre transform] Let $A$ be a convex body.  Let $\psi$ be a smooth real-valued function on $A^\circ$. $\psi$ is said to be strictly convex if the Hessian matrix $(\psi_{jk})$ is positive definite at every point in $A^\circ$. We call 
$$
\psi^*(y):=\sup_{x\in A^{\circ}} x\cdot y-\psi(x), \ x\cdot y:=\sum_{j=1}^n x^j y^j,
$$
the Legendre transform of $\psi$ (with respect to $A^\circ$). 
\end{definition} 

\begin{proposition}\label{pr:partial-convex-convex}  
Let $\psi$ be a smooth strictly convex function that tends to infinity at the boundary of a convex body $A$. Then its Legendre transform  $\psi^*$ is also smooth, strictly convex,  moreover  the gradient map of $\psi^*$
 \begin{equation}
\nabla \psi^*: y\mapsto x=\nabla \psi^*(y):=(\partial\psi^*/\partial x^1, \cdots, \partial\psi^*/\partial x^n),
\end{equation}
defines a diffeomorphism from $\mathbb R^n$ onto $A^\circ$. 
\end{proposition}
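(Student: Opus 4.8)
The plan is to run the standard Legendre-duality argument, taking care that the boundary blow-up hypothesis is exactly what makes the gradient map surjective onto all of $\mathbb{R}^n$. The starting point is to analyze, for fixed $y\in\mathbb{R}^n$, the function $x\mapsto x\cdot y-\psi(x)$ on $A^\circ$. Since $A$ is a convex body, hence bounded, the linear term $x\cdot y$ stays bounded, while $\psi(x)\to+\infty$ as $x$ approaches $\partial A$; therefore $x\cdot y-\psi(x)\to-\infty$ at the boundary, and the supremum defining $\psi^*(y)$ is attained at some interior point. Strict convexity of $\psi$ makes $x\mapsto x\cdot y-\psi(x)$ strictly concave, so this maximizer is unique; call it $x(y)$. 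Writing the first-order condition at $x(y)$ gives $y=\nabla\psi(x(y))$, which shows that every $y\in\mathbb{R}^n$ lies in the image of the gradient map $\nabla\psi:A^\circ\to\mathbb{R}^n$.

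Next I would establish that $\nabla\psi$ is in fact a diffeomorphism from $A^\circ$ onto $\mathbb{R}^n$. Surjectivity is the content of the previous step. For injectivity I would use the strict monotonicity of the gradient of a strictly convex function: for $x_1\neq x_2$ in $A^\circ$ one has $(\nabla\psi(x_1)-\nabla\psi(x_2))\cdot(x_1-x_2)>0$, so $\nabla\psi(x_1)\neq\nabla\psi(x_2)$. The Jacobian of $\nabla\psi$ is the Hessian $(\psi_{jk})$, which is positive definite by hypothesis and hence invertible everywhere; by the inverse function theorem $\nabla\psi$ is a local diffeomorphism, and a bijective local diffeomorphism is a global diffeomorphism with smooth inverse $x(\cdot)=(\nabla\psi)^{-1}$.

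Finally I would read off the asserted properties of $\psi^*$. Substituting the maximizer gives the closed form $\psi^*(y)=x(y)\cdot y-\psi(x(y))$ with $x(y)=(\nabla\psi)^{-1}(y)$ smooth, so $\psi^*$ is smooth. Differentiating and using the relation $y=\nabla\psi(x(y))$ to cancel the terms involving $\partial x(y)/\partial y$ yields $\nabla\psi^*(y)=x(y)$; thus the gradient map of $\psi^*$ equals $(\nabla\psi)^{-1}$, which is precisely the diffeomorphism of $\mathbb{R}^n$ onto $A^\circ$ claimed in the statement. Differentiating once more, the Hessian of $\psi^*$ at $y$ is the Jacobian of $(\nabla\psi)^{-1}$, namely $[(\psi_{jk})(x(y))]^{-1}$, the inverse of a positive definite matrix and therefore positive definite; hence $\psi^*$ is strictly convex.

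I expect the main obstacle to be the attainment step: verifying carefully that the supremum is realized at an interior point of $A^\circ$ for every $y$, since this is where the boundary blow-up hypothesis $\psi\to\infty$ on $\partial A$ enters and is exactly what guarantees that $\nabla\psi$ covers all of $\mathbb{R}^n$ (equivalently, that $\nabla\psi^*$ is defined on the whole of $\mathbb{R}^n$). The remaining steps are the standard convex-analytic bookkeeping of Legendre duality.
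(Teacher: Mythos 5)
Your proposal is correct and follows essentially the same route as the paper: surjectivity of $\nabla\psi$ from the boundary blow-up of $x\mapsto \psi(x)-x\cdot y$ (your attainment step is exactly the paper's unique-minimum argument for $\psi^y$), injectivity from strict convexity along line segments (your gradient-monotonicity inequality is the same line-restriction argument in different packaging), and smoothness plus $\nabla\psi^*=(\nabla\psi)^{-1}$ by differentiating the identity $\psi^*(\nabla\psi(x))=\nabla\psi(x)\cdot x-\psi(x)$. If anything you are slightly more complete than the paper, since you explicitly identify the Hessian of $\psi^*$ as $\left[(\psi_{jk})(x(y))\right]^{-1}$ to get strict convexity, a point the paper leaves implicit.
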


\begin{proof}

It is enough to prove that the  gradient map of $\psi$ defines a diffeomorphism from $A^\circ$ to $\mathbb R^n$, $\psi^*$ is smooth and $\nabla \psi^*$ is the inverse of $\nabla \psi$.

\medskip

\emph{Step 1: $\nabla\psi$ is a diffeomorphism from $A^\circ$ to $\mathbb R^n$.} Since $\psi$ is smooth and strictly convex, we know that $\nabla\psi$ is a local diffeomorphism.

\medskip

1.  $\nabla\psi$ is injective: assume that $\nabla\psi(x_1)=\nabla\psi(x_2)=y_0$, consider 
\begin{equation}
\psi^{y_0}(x):= \psi(x)- y_0\cdot x,
\end{equation}
we know that $\psi^{y_0}$ is smooth, strictly convex and 
\begin{equation}
\nabla\psi^{y_0}(x_1)=\nabla\psi^{y_0}(x_2)=0.
\end{equation}
Consider the restriction, say $g$, of $\psi^{y_0}$ to the line determined by $x_1$ and $x_2$, then $g$ is convex with critical points $x_1$ and $x_2$. Thus $g$ is a constant on the line segment from $x_1$ to $x_2$, moreover, strict convexity of $g$ implies $x_1=x_2$. Thus $\nabla\psi$ is injective.

\medskip

2. $\nabla\psi(A^0)=\mathbb R^n$: fix $y\in \mathbb R^n$, since $\psi^y$ tends to infinity at the boundary of $A$, strict convexity of $\psi$ implies that $\psi^y$ has a unique minimum point, say $x\in A^\circ$. Thus
$$
0=\nabla\psi^y(x)=\nabla\psi(x)-y.
$$ 

\medskip

\emph{Step 2: $\psi^*$ is smooth.} Notice that
\begin{equation}\label{eq:psi-star}
\psi^*(\nabla\psi(x))=\nabla\psi(x)\cdot x -\psi(x).
\end{equation}
Thus $\psi^*\circ \nabla\psi$ is a smooth, which implies that $\psi^*$ is smooth on $\mathbb R^n$. 

\medskip

\emph{Step 3: $\nabla \psi^*$ is the inverse of $\nabla \psi$.} Apply the differential to \eqref{eq:psi-star}, we get that
\begin{equation}\label{eq:psi-psi-star}
(\nabla \psi^* \circ \nabla\psi (x)) \cdot (\psi_{jk})=x\cdot (\psi_{jk}), \ \forall \ x\in A^\circ.
\end{equation} 
Since $(\psi_{jk})$ is an invertible matrix function, the above formula gives $\nabla \psi^* \circ \nabla\psi =Id$. 
\end{proof}

\textbf{Remark}: Put $\phi=\psi^*$. We know from the above proposition that $\nabla\phi$ is a diffeomorphism from $\mathbb R^n$ onto the interior of $A$, thus
\begin{equation}\label{eq:start}
|A|=\int_{A} dy=\int_{\mathbb R^n} MA(\phi)\, dx, \ dx:=dx^1\wedge \cdots \wedge dx^n, \  dy:=dy^1\wedge \cdots \wedge dy^n.
\end{equation} 
where $MA(\phi):=\det(\phi_{jk})$ denotes the determinant of the Hessian of $\phi$. In case $A$ is the convex hull of a finite set, say $\{p_j\}_{1\leq j\leq N} \subset \mathbb R^n$, one may choose
$$
\phi(x)=\log \left(\sum_{j=1}^N e^{p_j\cdot x}\right).
$$
For more results on convex function of the above type, see \cite{Nystrom15} and \cite{Gromov}, see also \cite{B-B} and \cite{CK} for the canonical choice of such $\phi$. 

\medskip

The following proposition is a generalization of \eqref{eq:start}.

\begin{proposition}\label{pr:phi+phi}  Let $\phi_1, \cdots, \phi_N$ be smooth strictly convex functions such that each $\nabla\phi_j$ is a diffeomorphism from $\mathbb R^n$ onto the interior of a convex body $A_j$. Then we have
\begin{equation}\label{eq:phi-phi}
|t_1 A_1+\cdots +t_N A_N|=\int_{\mathbb R^n} MA(t_1\phi_1+\cdots +t_N\phi_N) \, dx, \ t_j >0, \ \forall \ 1\leq j\leq N.
\end{equation}
\end{proposition}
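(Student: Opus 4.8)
The plan is to reduce the identity to the already-established formula \eqref{eq:start}. Set $\phi := t_1\phi_1+\cdots+t_N\phi_N$ and $A := t_1 A_1+\cdots+t_N A_N$. Since each $\phi_j$ is smooth and strictly convex and each $t_j>0$, the function $\phi$ is smooth with positive definite Hessian $\sum_j t_j(\phi_j)_{kl}$, hence strictly convex, and by definition $MA(\phi)=MA(t_1\phi_1+\cdots+t_N\phi_N)$. Thus it is enough to prove that $\nabla\phi$ is a diffeomorphism from $\mathbb{R}^n$ onto the interior $A^\circ$ of the Minkowski sum: granting this, the change of variables $y=\nabla\phi(x)$ in $\int_A dy$, carried out exactly as in the Remark following Proposition \ref{pr:partial-convex-convex}, gives $|A|=\int_{\mathbb{R}^n}MA(\phi)\,dx$, which is \eqref{eq:phi-phi}.

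That $\nabla\phi$ is an injective local diffeomorphism follows verbatim from Step 1 of Proposition \ref{pr:partial-convex-convex}: positive definiteness of the Hessian makes it a local diffeomorphism, and restricting $\phi-y_0\cdot x$ to the line through two points with the same gradient and invoking strict convexity forces those points to coincide. In particular the image $\nabla\phi(\mathbb{R}^n)$ is open. For the inclusion $\nabla\phi(\mathbb{R}^n)\subseteq A^\circ$, I would write $\nabla\phi(x)=\sum_j t_j\nabla\phi_j(x)$ with each $\nabla\phi_j(x)\in A_j^\circ$, and use the elementary fact that for convex bodies the interior of a Minkowski sum equals the Minkowski sum of the interiors, so that $\sum_j t_j A_j^\circ=A^\circ$.

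The main point, and the step I expect to be the real obstacle, is the reverse inclusion $A^\circ\subseteq\nabla\phi(\mathbb{R}^n)$, i.e. surjectivity onto the open Minkowski sum. The plan is to show that for every $y\in A^\circ$ the convex function $\phi^y(x):=\phi(x)-y\cdot x$ attains its minimum; the vanishing of its gradient at a minimizer then exhibits $y$ as a value of $\nabla\phi$. To produce a minimizer I would establish coercivity of $\phi^y$ by computing the asymptotic slope of $\phi$ along each unit direction $u$. Since $s\mapsto\nabla\phi_j(su)\cdot u$ is increasing (convexity) and the image of $\nabla\phi_j$ is all of $A_j^\circ$, one finds $\lim_{r\to\infty}\phi_j(ru)/r=h_{A_j}(u)$, the support function $h_{A_j}(u)=\sup_{a\in A_j}a\cdot u$, and hence $\lim_{r\to\infty}\phi(ru)/r=\sum_j t_j h_{A_j}(u)=h_A(u)$. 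For $y\in A^\circ$ there is a uniform gap $h_A(u)-y\cdot u\geq\delta>0$ over the compact unit sphere, so $\phi^y$ grows linearly with slope $h_A(u)-y\cdot u\geq\delta$ in every direction; as the convergence of the running slopes is monotone in $r$ with a limit continuous in $u$, Dini's theorem upgrades it to uniform convergence and yields $\phi^y(x)\to+\infty$ as $|x|\to\infty$. Thus $\phi^y$ is coercive and attains its minimum, which completes the surjectivity, makes $\nabla\phi\colon\mathbb{R}^n\to A^\circ$ a diffeomorphism, and finishes the proof.
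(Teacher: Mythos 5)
Your proof is correct, but the surjectivity step -- the real content of the proposition -- is handled quite differently from the paper. The paper argues by induction on $N$ (reducing to two summands, the $t_j$ being absorbed into the $\phi_j$), and for a target $y=y_1+y_2$ with $y_j\in A_j^\circ$ it translates each summand: writing $\phi_j^{y_j}(x)=\phi_j(x)-y_j\cdot x$, the point $x_j$ with $\nabla\phi_j(x_j)=y_j$ is an interior minimum of $\phi_j^{y_j}$, and strict convexity together with an interior minimum makes each $\phi_j^{y_j}$ proper; a sum of proper convex functions is proper, so it attains a minimum $x_0$, and then $\nabla(\phi_1+\phi_2)(x_0)=y_1+y_2$. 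Note that this produces exactly the points of $A_1^\circ+A_2^\circ$, so the paper is implicitly relying on the same fact $\mathrm{int}(A_1+A_2)=A_1^\circ+A_2^\circ$ that you invoke for the forward inclusion. Your route dispenses with the induction and the pointwise translation trick: you identify the asymptotic slope of $\phi=\sum_j t_j\phi_j$ in every direction $u$ with the support function $h_A(u)$, and then a uniform gap $h_A(u)-y\cdot u\ge\delta>0$ on the sphere plus Dini's theorem gives coercivity of $\phi^y$ for every $y\in A^\circ$ at once. This costs more machinery (support functions, a uniform-convergence argument) but buys a sharper and more global statement -- the recession function of $\phi$ is exactly $h_A$, the standard Legendre-duality fact -- whereas the paper's argument is more elementary, needing only that a strictly convex function with an interior critical point is proper. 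One step of yours deserves to be spelled out: the claim $\lim_{r\to\infty}\phi_j(ru)/r=h_{A_j}(u)$ does not follow from monotonicity of $s\mapsto\nabla\phi_j(su)\cdot u$ alone; you also need that the monotone limit dominates $a\cdot u$ for every $a\in A_j^\circ$. This follows from monotonicity of the gradient map: with $a=\nabla\phi_j(x_a)$ one has $\bigl(\nabla\phi_j(su)-a\bigr)\cdot(su-x_a)\ge 0$, and since $\nabla\phi_j(su)$ stays in the compact set $A_j$, dividing by $s$ and letting $s\to\infty$ yields $\lim_{s\to\infty}\nabla\phi_j(su)\cdot u\ge a\cdot u$; combined with the trivial upper bound by $h_{A_j}(u)$ this pins down the limit. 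With that one-line lemma supplied, your argument is complete and fully rigorous.
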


\begin{proof} By induction on $N$, it suffices to show that
\begin{equation}\label{eq:0123}
\nabla(\phi_1+\phi_2)(\mathbb R^n)=A^\circ_1+A^\circ_2,
\end{equation}
where $A^\circ$ denotes the interior of $A$. Obviously we have $\nabla(\phi_1+\phi_2)(\mathbb R^n)\subset A^\circ_1+A^\circ_2$.  Thus it is enough to show that for every $y_1\in A_1^\circ$ and every $y_2\in A_2^\circ$, there exists $x_0\in \mathbb R^n$ such that $\nabla(\phi_1+\phi_2)(x_0)=y_1+y_2$. Consider $\phi_j^{y_j}$ instead of $\phi_j$, one may assume that $y_1=y_2=0$. Choose $x_1$ and $x_2$ such that 
\begin{equation}
\nabla\phi_1(x_1)=\nabla\phi_2(x_2)=0. 
\end{equation}
Since $\phi_j$ is convex, we know that each $x_j$ is the minimum point of $\phi_j$. Thus strict convexity of $\phi_j$ implies that
\begin{equation}
\phi_j(x) \to \infty, \ \text{as} \ |x| \to\infty,
\end{equation}
i.e. each $\phi_j$ is proper. Thus $\phi_1+\phi_2$ is also proper. Hence there exists a unique minimum point, say $x_0$, of $\phi_1+\phi_2$. Thus $\nabla(\phi_1+\phi_2)(x_0)=0$. The proof is complete.
\end{proof}

\textbf{Remark}: The above proposition implies that
\begin{equation}
p(t):=|t_1 A_1+\cdots +t_n A_n|,
\end{equation}
is a polynomial of degree $n$.  We call the coefficient of $t_1\cdots t_n$ in the polynomial $p(t)$, i.e.
$$
V(A_1, \cdots, A_n):=\frac{\partial^n |t_1 A_1+\cdots +t_n A_n|}{\partial t_1 \cdots \partial t_n}, 
$$
 the \emph{mixed volume} of $A_1, \cdots, A_n$. 
 
\subsection{Alexandrov-Fenchel inequality} 

\begin{theorem}[Alexandrov-Fenchel inequality] Let $A_1, \cdots, A_n$ be convex bodies in $\mathbb R^n$. Assume that $n\geq 2$. Then $$V(A_1, \cdots, A_n)^2\geq V(A_1,A_1, A_3,\cdots, A_n) V(A_2,A_2, A_3,\cdots, A_n).$$  
\end{theorem}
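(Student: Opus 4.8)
The plan is to transport the inequality to the complete K\"ahler setting of Theorem~\ref{th:1} and then quote that theorem, the bridge being the Legendre-transform dictionary of Section~2 together with the identification of mixed volumes with intersection numbers of $(1,1)$-forms on the tube manifold $X := \mathbb R^n \times (\mathbb R^n/\mathbb Z^n)$, carrying the complex coordinate $z = x+iy$ with $x\in\mathbb R^n$ and $y \in \mathbb R^n/\mathbb Z^n$. First I would pass to a smooth strictly convex model for each body. Since mixed volumes are continuous for the Hausdorff metric and polytopes are dense among convex bodies, it suffices to treat the case where every $A_j$ is a polytope $\mathrm{conv}\{p^{(j)}_1,\dots,p^{(j)}_{N_j}\}$ with non-empty interior; for such a body the function $\phi_j(x) := \log\big(\sum_{k} e^{p^{(j)}_k\cdot x}\big)$ from the remark after Proposition~\ref{pr:partial-convex-convex} is smooth and strictly convex with $\nabla\phi_j$ a diffeomorphism of $\mathbb R^n$ onto $A_j^\circ$, so Proposition~\ref{pr:phi+phi} applies to $\phi_1,\dots,\phi_n$.

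Next I would build the dictionary. Put $\alpha_j := i\partial\op\phi_j$ on $X$; because $\phi_j$ depends only on $x$ and is convex, each $\alpha_j$ is a smooth, $d$-closed, semi-positive $(1,1)$-form, and integrating the top power $(i\partial\op\phi_j)^n$ over $X$ reduces, by Fubini over the unit torus fibres, to a fixed positive multiple of $\int_{\mathbb R^n} MA(\phi_j)\,dx = |A_j|$, cf. \eqref{eq:start}. Applying this fibre computation to the convex function $t_1\phi_1+\dots+t_n\phi_n$, whose gradient image is $(t_1A_1+\dots+t_nA_n)^\circ$ by Proposition~\ref{pr:phi+phi}, and using \eqref{eq:phi-phi}, one obtains for a suitable constant $c_n>0$ independent of $t$ the identity
$$
\int_X \frac{\big(\sum_j t_j\alpha_j\big)^n}{n!} = c_n^{-1}\,\big|t_1A_1+\dots+t_nA_n\big|.
$$
Comparing the coefficient of $t_1\cdots t_n$ on the two sides, together with the analogous comparison when indices are repeated, yields
$$
V(A_{i_1},\dots,A_{i_n}) = c_n \int_X \alpha_{i_1}\wedge\cdots\wedge\alpha_{i_n}
$$
for every multi-index, with one and the same constant $c_n$. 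In particular, writing $T := \alpha_3\wedge\cdots\wedge\alpha_n$, the three mixed volumes in the theorem equal $c_n\int_X\alpha_1\wedge\alpha_2\wedge T$, $c_n\int_X\alpha_1^2\wedge T$ and $c_n\int_X\alpha_2^2\wedge T$, respectively.

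Finally I would supply the geometric hypotheses and conclude. By Lemma~\ref{le:complete} I would produce a complete K\"ahler metric $\hat\omega$ on $X$ of finite volume with $\alpha_j\leq\hat\omega$ for all $j$, the boundedness of the gradient images $A_j^\circ$ being what keeps the volume finite. Theorem~\ref{th:1} then gives
$$
\left(\int_X\alpha_1\wedge\alpha_2\wedge T\right)^2 \geq \left(\int_X\alpha_1^2\wedge T\right)\left(\int_X\alpha_2^2\wedge T\right),
$$
and multiplying by $c_n^2$ and reading off the dictionary produces the Alexandrov-Fenchel inequality for polytopes; the general case follows from the approximation of the first paragraph.

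The step I expect to be the genuine obstacle is the construction of $\hat\omega$ in Lemma~\ref{le:complete}: on the non-compact $X$ one must reconcile completeness (a proper exhaustion of $\hat\omega$-bounded gradient) with finite volume and with the domination $\alpha_j\leq\hat\omega$, and these requirements pull against each other, since a convex finite-volume metric tends to degenerate, hence lose completeness, at infinity. Reconciling them --- so that the H\"ormander-type $L^2$-machinery underlying Theorem~\ref{th:1} can be switched on --- is where the analytic substance of the argument lies.
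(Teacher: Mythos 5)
Your proposal is correct and follows essentially the same route as the paper: the Legendre-transform dictionary identifying mixed volumes with integrals of $dd^c\phi_{i_1}\wedge\cdots\wedge dd^c\phi_{i_n}$ over $\mathbb R^n\times\mathbb T^n$ (Lemma \ref{le:28-new} and its corollary), the complete finite-volume K\"ahler metric built from Lemma \ref{le:complete} by adding $\psi$ to the sum of the potentials, and then Theorem \ref{th:1} (the $m=2$ case of Theorem \ref{th:main}). Your only deviations are cosmetic: you reduce to polytopes with log-sum-exp potentials where the paper uses Legendre transforms of arbitrary smooth strictly convex exhausting functions via Proposition \ref{pr:partial-convex-convex}, and you quote the quadratic inequality of Theorem \ref{th:1} directly rather than passing through the equivalent log-convexity formulation of Proposition \ref{pr:AF-log}.
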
 

The following lemma can be used to find equivalent forms of the Alexandrov-Fenchel inequality.

\begin{lemma}\label{le:one-h} Let $f$ be a positive smooth function on an open convex cone, say $\mathcal K$,  in $\mathbb R^N$. Assume that $f$ is $1$-homogeneous, i.e. 
\begin{equation*}
f(tx)\equiv t f(x), \ \forall \ t>0, \ x\in \mathcal K.
\end{equation*}
Then the following statements are equivalent: 

$A1$: $f(x+y)\geq  f(x)+f(y), \ \forall \ x, y\in\mathcal K$;

$A2$:  $-f$ is convex; 

$A3$:  $-\log f$ is convex;

$A4$:  For every $x', y' \in \mathcal K$, $ t\mapsto -\log f(tx'+(1-t)y')$ is convex on $(0,1)$. 

\end{lemma}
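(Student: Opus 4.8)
The plan is to prove the equivalences by exploiting the $1$-homogeneity of $f$ repeatedly, since homogeneity is what links superadditivity, concavity, and log-concavity for a positive function. I would establish the cycle $A2 \Rightarrow A1 \Rightarrow A3 \Rightarrow A4 \Rightarrow A2$, or more economically prove the pairwise equivalences $A1 \Leftrightarrow A2$ and $A2 \Leftrightarrow A3$ and $A3 \Leftrightarrow A4$, whichever keeps the homogeneity bookkeeping cleanest.

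First I would treat $A2 \Rightarrow A1$. If $-f$ is convex, then for $x,y \in \mathcal K$ convexity applied to the midpoint gives $f\!\left(\frac{x+y}{2}\right) \geq \frac{1}{2}f(x)+\frac{1}{2}f(y)$, and $1$-homogeneity turns the left side into $\frac12 f(x+y)$, yielding $A1$ after multiplying by $2$. For the converse $A1 \Rightarrow A2$, I would show that a $1$-homogeneous superadditive function is concave: for $x,y\in\mathcal K$ and $t\in(0,1)$, apply $A1$ to the pair $tx$ and $(1-t)y$ to get $f(tx+(1-t)y)\geq f(tx)+f((1-t)y)=tf(x)+(1-t)f(y)$, using homogeneity on each term; this is exactly midpoint-type concavity in the strong pointwise form, hence $-f$ is convex.

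Next I would handle $A2 \Leftrightarrow A3$, i.e. that for a positive $1$-homogeneous function, concavity of $f$ is equivalent to concavity of $\log f$. The direction $A3\Rightarrow A2$ can be obtained by exponentiating, but I expect the cleanest route uses the same superadditivity formulation: one shows $A3 \Leftrightarrow A1$ directly. Indeed, $\log f$ concave means $\log f(tx+(1-t)y)\geq t\log f(x)+(1-t)\log f(y)$; combined with homogeneity this again reduces to the superadditivity statement, because for homogeneous $f$ both the linear and the logarithmic concavity collapse to the same inequality $A1$ after rescaling. This is the step I expect to require the most care: the arithmetic-geometric mean inequality enters when passing between the additive and multiplicative forms, and I must check that homogeneity makes the AM-GM gap vanish rather than merely go one direction. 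Finally, $A3 \Leftrightarrow A4$ is essentially a definition-unwinding, since $A4$ asserts convexity of $-\log f$ along each segment $tx'+(1-t)y'$, and a function on a convex set is convex if and only if its restriction to every line segment is convex; I would note that the parametrization in $A4$ is exactly the restriction of $-\log f$ to the segment from $y'$ to $x'$, so $A4$ is just $A3$ stated segment-by-segment, completing the chain.
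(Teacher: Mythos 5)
Your proposal is correct, and it takes a genuinely (if moderately) different route from the paper. The paper proves the single cycle $A1 \Rightarrow A2 \Rightarrow A3 \Rightarrow A4 \Rightarrow A1$: its step $A1 \Rightarrow A2$ is the same homogeneity splitting you use, and its step $A4 \Rightarrow A1$ is exactly your normalization trick (take $x'=x/f(x)$, $y'=y/f(y)$, $t=f(x)/(f(x)+f(y))$, so the segment point becomes $(x+y)/(f(x)+f(y))$ and the geometric mean on the right becomes $1$ --- precisely how, in your words, the AM--GM gap vanishes); but the paper's step $A2 \Rightarrow A3$ is a second-derivative computation, $(-\log f)_{\xi\xi} = -f_{\xi\xi}/f + (f_\xi)^2/f^2 \geq 0$, which leans on the smoothness hypothesis. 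You instead route everything through $A1$: the direction $A1 \Rightarrow A3$ follows from concavity plus the weighted AM--GM inequality $tf(x)+(1-t)f(y) \geq f(x)^t f(y)^{1-t}$, and $A3 \Rightarrow A1$ is the normalization above; combined with your $A1 \Leftrightarrow A2$ and the definition-unwinding $A3 \Leftrightarrow A4$ (valid since $\mathcal K$ is convex, so every segment between points of $\mathcal K$ lies in $\mathcal K$), this closes all equivalences without ever differentiating. What this buys you is generality: your argument applies to arbitrary positive $1$-homogeneous functions, not just smooth ones (relevant, e.g., for mixed-volume functionals before any smoothing), at the cost of invoking AM--GM where the paper simply computes a Hessian that is available anyway under its hypotheses. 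One caveat: your parenthetical remark that $A3 \Rightarrow A2$ ``can be obtained by exponentiating'' is false as stated --- the exponential of a concave function need not be concave, since log-concavity is strictly weaker than concavity for general positive functions, and it is exactly the homogeneity that rescues the implication --- but your actual route goes $A3 \Rightarrow A1 \Rightarrow A2$ and never uses that remark, so the proof is unaffected.
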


\begin{proof} Since $f$ is $1$-homogeneous, $A1$ implies
\begin{equation}
f(tx+(1-t)y) \geq tf(x)+(1-t)f(y).
\end{equation}
Thus $A1 \Rightarrow A2$. Since 
\begin{equation}
(-\log f)_{\xi\xi}=\frac{-f_{\xi\xi}}{f}+\frac{(f_\xi)^2}{f^2}, \  f_{\xi}=\sum\xi^j f_{x_j},
\end{equation}
we know $A2\Rightarrow A3$. Since $A3\Rightarrow A4$ is trivial, it is enough to show $A4\Rightarrow A1$: notice that $A4$ implies
\begin{equation}
f(tx'+(1-t)y')\geq f(x')^t f(y')^{1-t}.
\end{equation}
Take
\begin{equation}
x'=\frac{x}{f(x)}, \ y'=\frac{y}{f(y)}, \ t=\frac{f(x)}{f(x)+f(y)}, 
\end{equation}
we get $A1$. The proof is complete.
\end{proof}

Apply the above lemma to the following function
\begin{equation}
f(x)=V(A_x, A_x, A_3, \cdots, A_n)^{1/2}, \  A_x:= x_1A_1+x_2 A_2, 
\end{equation}
on $\mathcal K:=\mathbb R_+^2$. Notice that the square of  
\begin{equation}
f(x+y)\geq f(x)+f(y), 
\end{equation}
is equivalent to
$$
V(A_x, A_y, A_3, \cdots, A_n)^{2}\geq V(A_x, A_x, A_3, \cdots, A_n) V(A_y, A_y, A_3, \cdots, A_n).
$$
By the above lemma, we have

\begin{proposition}\label{pr:AF-log} The Alexandrov-Fenchel  inequality is equivalent to the convexity of
$$
t\mapsto -\log V(A_t, A_t, A_3, \cdots, A_n), \ A_t:=t A_1+(1-t)A_2,
$$
on $(0,1)$. 
\end{proposition}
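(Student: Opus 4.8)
The plan is to apply Lemma \ref{le:one-h} to the function $f(x) = V(A_x, A_x, A_3, \ldots, A_n)^{1/2}$ on the cone $\mathcal K = \mathbb R_+^2$, exactly as set up above, and to identify the convexity in the statement with property $A4$ for this $f$. First I would record the structural facts about $f$. Since the mixed volume is symmetric and multilinear in its arguments and $A_{tx} = t A_x$, one gets $V(A_{tx}, A_{tx}, A_3, \ldots, A_n) = t^2 V(A_x, A_x, A_3, \ldots, A_n)$, so $f(tx) = t f(x)$ and $f$ is $1$-homogeneous; moreover $V(A_x, A_x, A_3, \ldots, A_n)$ is a quadratic form in $x$ that is strictly positive on $\mathbb R_+^2$ (each $A_x$ is full-dimensional there), so $f$ is positive and smooth and Lemma \ref{le:one-h} applies.

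Next I would make the dictionary between the two sides precise. Writing $V_{ij} := V(A_i, A_j, A_3, \ldots, A_n)$ and using bilinearity, $f(x)^2 = x_1^2 V_{11} + 2 x_1 x_2 V_{12} + x_2^2 V_{22}$, while the identity $A_{x+y} = A_x + A_y$ gives $f(x+y)^2 = f(x)^2 + 2 V(A_x, A_y, A_3, \ldots, A_n) + f(y)^2$. Squaring $f(x+y) \ge f(x) + f(y)$ and cancelling therefore turns property $A1$ into $V(A_x, A_y, A_3, \ldots, A_n)^2 \ge f(x)^2 f(y)^2$, which is the Alexandrov-Fenchel inequality for the pair $A_x, A_y$; taking $x = e_1$, $y = e_2$ recovers the inequality for $A_1, A_2$ themselves. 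On the other hand, since $-\log V(A_t, A_t, A_3, \ldots, A_n) = -2 \log f(t e_1 + (1-t) e_2)$, the convexity in the statement is precisely property $A4$ of $f$ for the segment from $e_1$ to $e_2$ (the harmless factor $2$ not affecting convexity).

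With this dictionary the forward implication is immediate: the Alexandrov-Fenchel inequality gives $A1$, Lemma \ref{le:one-h} upgrades $A1$ to $A4$, and the desired convexity is the special case $x' = e_1$, $y' = e_2$ of $A4$. For the converse, the delicate point—and what I expect to be the main obstacle—is that property $A4$ in Lemma \ref{le:one-h} quantifies over \emph{all} segments, whereas the statement singles out the one from $e_1$ to $e_2$. Convexity along a single chord is a priori weaker than $A4$: by homogeneity the Hessian of $-\log f$ has prescribed radial behaviour and is nonnegative along the tangent direction $e_1 - e_2$, but positivity of a symmetric $2\times 2$ matrix in these two directions does not force positive semidefiniteness, so one chord on $(0,1)$ need not force $V_{12}^2 \ge V_{11} V_{22}$ unless the minimizing point of the underlying quadratic lands inside $(0,1)$. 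I would close this gap by exploiting the homogeneity of $f$: applying the convexity hypothesis to the rescaled pair $A_1 / \sqrt{V_{11}}$, $A_2 / \sqrt{V_{22}}$ (equivalently, running the $A4 \Rightarrow A1$ step of Lemma \ref{le:one-h}, which evaluates $A4$ at $x' = e_1/f(e_1)$, $y' = e_2/f(e_2)$) moves the relevant critical point into $(0,1)$ and extracts exactly the instance of $A1$ encoding $V_{12}^2 \ge V_{11} V_{22}$. Equivalently, reading the statement as a scheme over all admissible $A_1, A_2$ makes the single-segment convexity for every such pair coincide with full $A4$, so the converse again follows directly from Lemma \ref{le:one-h}.
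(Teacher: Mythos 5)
Your proposal is correct and takes essentially the same route as the paper: both apply Lemma \ref{le:one-h} to $f(x)=V(A_x,A_x,A_3,\cdots,A_n)^{1/2}$ on $\mathcal K=\mathbb R_+^2$, identify the square of $A1$ with the Alexandrov--Fenchel inequality for the pair $A_x,A_y$, and read the stated convexity as an instance of $A4$. Your extra care in the converse direction --- noting that convexity along the single chord from $e_1$ to $e_2$ is formally weaker than $A4$, and closing the gap by rescaling each $A_i$ by $V(A_i,A_i,A_3,\cdots,A_n)^{-1/2}$ (equivalently, reading the proposition as quantified over all pairs of convex bodies, so chords in the cone correspond to rescaled pairs) --- correctly makes explicit a point the paper passes over silently, and it mirrors exactly the choice $x'=x/f(x)$, $y'=y/f(y)$ in the lemma's $A4\Rightarrow A1$ step.
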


A generalized form of the Alexandrov-Fenchel inequality is also true. 

\begin{theorem}\label{th:AF-m} Let $A_1, A_2, A_{m+1}, \cdots, A_n$, $2\leq m\leq n$, be convex bodies in $\mathbb R^n$. Then the following function is convex on $(0,1)$
$$
t\mapsto -\log V(\underbrace{ A_t, \cdots, A_t}_m, A_{m+1}, \cdots, A_n), \ A_t:=t A_1+(1-t)A_2.
$$ 
\end{theorem}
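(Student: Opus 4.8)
The plan is to transport the whole problem to the complex manifold $X=\mathbb{R}^n\times(\mathbb{R}^n/\mathbb{Z}^n)$ and then read the convexity off from the Khovanski\u{i}-Teissier inequality of Theorem \ref{th:1}. I first reduce to the smooth strictly convex case. When $A_j$ is a polytope, the remark after \eqref{eq:start} supplies an explicit smooth strictly convex potential $\phi_j$ (of the form $\log\sum e^{p\cdot x}$) with $\nabla\phi_j$ a diffeomorphism from $\mathbb{R}^n$ onto $A_j^\circ$, so by Proposition \ref{pr:phi+phi} the volumes $|t_1A_1+\cdots+t_NA_N|$ are the integrals of $MA(t_1\phi_1+\cdots+t_N\phi_N)$. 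Setting $\alpha_j:=i\partial\dbar\phi_j$ (these are $d$-closed semi-positive $(1,1)$-forms which descend to $X$ because $\phi_j$ depends only on $x=\mathrm{Re}\,z$), the identity $\int_X(\sum t_j\alpha_j)^n=c_n\,|t_1A_1+\cdots+t_NA_N|$ holds with a fixed universal constant $c_n>0$; polarizing in the $t_j$ gives $\int_X\alpha_{j_1}\wedge\cdots\wedge\alpha_{j_n}=c_n\,V(A_{j_1},\ldots,A_{j_n})$. Hence, with $\alpha_t:=t\alpha_1+(1-t)\alpha_2$ and $T:=\alpha_{m+1}\wedge\cdots\wedge\alpha_n$,
\[
V(\underbrace{A_t,\ldots,A_t}_m,A_{m+1},\ldots,A_n)=c_n\int_X\alpha_t^{\,m}\wedge T .
\]

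Let $g(t)$ denote the integral on the right; since $-\log V$ and $-\log g$ differ by a constant, it suffices to show $g$ is log-concave on $(0,1)$, i.e. $gg''\le(g')^2$. As $\alpha_t$ is affine in $t$ with derivative $\beta:=\alpha_1-\alpha_2$, and $(1,1)$-forms commute in the wedge product, differentiating under the integral sign gives $g'=m\int_X\beta\wedge\alpha_t^{\,m-1}\wedge T$ and $g''=m(m-1)\int_X\beta^2\wedge\alpha_t^{\,m-2}\wedge T$. Introducing, for each fixed $t$, the symmetric bilinear form
\[
Q_t(\mu,\nu):=\int_X\mu\wedge\nu\wedge\alpha_t^{\,m-2}\wedge T ,
\]
these read $g=Q_t(\alpha_t,\alpha_t)$, $g'=mQ_t(\beta,\alpha_t)$ and $g''=m(m-1)Q_t(\beta,\beta)$, so everything reduces to the reverse Cauchy-Schwarz inequality $Q_t(\alpha_t,\beta)^2\ge Q_t(\alpha_t,\alpha_t)Q_t(\beta,\beta)$.

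Here Theorem \ref{th:1} enters. For each fixed $t\in(0,1)$ I apply it with its ``$T$'' replaced by $\alpha_t^{\,m-2}\wedge T$, which is a wedge of $n-2$ $d$-closed semi-positive forms dominated by $\hat\omega$; it yields $Q_t(\alpha_1,\alpha_2)^2\ge Q_t(\alpha_1,\alpha_1)\,Q_t(\alpha_2,\alpha_2)$. Restricting $Q_t$ to the plane $W:=\mathrm{span}(\alpha_1,\alpha_2)$, this says exactly that its Gram determinant $Q_t(\alpha_1,\alpha_1)Q_t(\alpha_2,\alpha_2)-Q_t(\alpha_1,\alpha_2)^2$ is $\le0$; and a symmetric bilinear form on a plane with nonpositive Gram determinant obeys the reverse Cauchy-Schwarz inequality for every pair of vectors in that plane (the difference equals $-\det$ times a squared coordinate determinant). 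Taking the pair $(\alpha_t,\beta)\subset W$ gives the required inequality, hence $(g')^2\ge\tfrac{m}{m-1}\,gg''$. Since $\tfrac{m}{m-1}\ge1$ this forces $gg''\le(g')^2$ when $g''\ge0$, and the bound is trivial when $g''<0$; thus $g$ is log-concave and $t\mapsto-\log V$ is convex. For $m=2$ this recovers Proposition \ref{pr:AF-log}, i.e. the classical Alexandrov-Fenchel inequality.

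The place where I expect the real work to lie is not this formal chain but the legitimacy of the two reductions. First, Theorem \ref{th:1} demands a complete, finite-volume K\"ahler metric $\hat\omega$ on $\mathbb{R}^n\times(\mathbb{R}^n/\mathbb{Z}^n)$ with $\alpha_j\le\hat\omega$ for all $j$; producing such a metric is precisely the content of Lemma \ref{le:complete}, and after rescaling I may arrange the domination bounds. Second, the explicit potentials cover only polytopes, so I must approximate: every convex body is a Hausdorff limit of polytopes, mixed volumes depend continuously on the bodies, and convexity of $t\mapsto-\log V$ is preserved under pointwise limits, so the inequality established for polytopes passes to all convex bodies. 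Once these two analytic points are secured, the computation above is entirely formal.
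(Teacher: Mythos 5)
Your argument is correct in substance, but it takes a genuinely different route from the paper's. The paper also transports the problem to $\mathbb R^n\times\mathbb T^n$ (this is the reduction to Theorem \ref{th:AF-m-c}), but then proves the general-$m$ statement in one stroke by invoking Theorem \ref{th:main}, whose proof runs the Brascamp--Lieb lemma together with Timorin's $T$-Hodge theory and the $L^2$-estimate directly on $\omega^m\wedge T$. You instead use only the $m=2$ case (Theorem \ref{th:1}) and bootstrap: with $Q_t(\mu,\nu)=\int_X\mu\wedge\nu\wedge\alpha_t^{m-2}\wedge T$, applying Theorem \ref{th:1} with its $T$ replaced by $\alpha_t^{m-2}\wedge T$ is legitimate ($\alpha_t$ is again $d$-closed, semi-positive and $\le\hat\omega$), the nonpositive Gram determinant on $\mathrm{span}(\alpha_1,\alpha_2)$ gives reverse Cauchy--Schwarz for the pair $(\alpha_t,\beta)$, and your chain $(g')^2\ge\tfrac{m}{m-1}\,gg''$ is correct --- in fact it proves concavity of $g^{1/m}$, which is stronger than log-concavity. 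This is essentially the classical deduction of Theorem \ref{th:AF-m} from the $m=2$ Alexandrov--Fenchel inequality (the equivalence the paper attributes to Schneider, Theorem 7.4.5), transplanted to the complex picture: the paper's route buys a single self-contained argument uniform in $m$, yours buys economy of input. Two smaller divergences: the paper needs no polytope approximation, since Proposition \ref{pr:partial-convex-convex} produces potentials for \emph{arbitrary} convex bodies as Legendre transforms of smooth strictly convex functions blowing up at the boundary (your approximation step is nevertheless sound, as mixed volumes are Hausdorff-continuous and positive for bodies with interior); and differentiation under $\int_X$ deserves the remark that $g(t)$ is a polynomial with finite coefficients --- this is exactly the point of the paper's proof of Lemma \ref{le:1-new}, which you assert without comment.

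The one step that, as written, would fail is the construction of $\hat\omega$: you say Lemma \ref{le:complete} produces it and that ``after rescaling'' you may arrange $\alpha_j\le\hat\omega$. Rescaling $dd^c\psi$ alone does not work: its eigenvalues decay like $(1+(x^j)^2)^{-1}$, whereas the Hessian of a polytope potential can stay bounded below along face directions. Concretely, for the simplex potential $\phi=\log(1+e^{x^1}+e^{x^2})$ and $\xi=(1,-1)$, one has $\phi_{\xi\xi}\to 1$ along the ray $x^1=x^2=t\to\infty$, while $\psi_{\xi\xi}\sim 4/t^2\to 0$ there, so no constant $C$ gives $dd^c\phi\le C\,dd^c\psi$. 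The correct construction is the paper's own (Section 7): take $\hat\omega=dd^c(\psi+\phi_1+\phi_2+\phi_{m+1}+\cdots+\phi_n)$, which dominates each $dd^c\phi_j$ by construction, remains complete because it dominates $dd^c\psi$, and has finite volume because $\nabla(\psi+\sum\phi_j)(\mathbb R^n)$ is bounded. With that replacement, your argument goes through.
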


The above theorem is in fact equivalent to the Alexandrov-Fenchel inequality (see Theorem 7.4.5 in \cite{Schneider}).

\subsection{Khovanski\u{i}-Teissier inequality}

We will use the following complex geometry interpretation of the volume function in Proposition \ref{pr:phi+phi}.

\begin{lemma}\label{le:28-new}  Let $\phi_1, \cdots, \phi_N$ be smooth strictly convex functions such that each $\nabla\phi_j$ is a diffeomorphism from $\mathbb R^n$ onto the interior of a convex body $A_j$. Let us look at 
$$
\phi:=\sum_{j=1}^N t_j \phi_j,
$$
as a function on 
$$
\mathbb R^n \times {\mathbb T}^n=\mathbb C^n/ i\mathbb Z^n, \ {\mathbb T}:=\mathbb R/ \mathbb Z, \  i:=\sqrt{-1},
$$
i.e. $\phi(x+iy):=\sum_{j=1}^N t_j \phi_j(x)$. Then we have
$$
\int_{\mathbb R^n} MA(\phi) ~ dx=\int_{\mathbb R^n \times {\mathbb T}^n} \frac{(dd^c\phi)^n}{n!}, \ d^c:=i\dbar-i\partial.
$$
\end{lemma}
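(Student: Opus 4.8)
The plan is to reduce the identity to a coordinate computation followed by a single linear-algebra fact, using crucially that $\phi$ is independent of the imaginary variable. Write $z^j = x^j + i y^j$ on $\mathbb{C}^n$, so that $(x,y)$ furnish coordinates on $\mathbb{R}^n \times \mathbb{T}^n = \mathbb{C}^n/i\mathbb{Z}^n$ with each $y^j \in \mathbb{R}/\mathbb{Z}$. First I would note that since $\phi(x+iy)$ depends only on $x$ it descends to the quotient, and that $\phi_{z^j} = \phi_{\bar z^j} = \tfrac12 \phi_{x^j}$. Feeding this into $d^c = i\dbar - i\partial$ and using $d\bar z^j - dz^j = -2i\,dy^j$ collapses the one-form to the clean expression
\begin{equation*}
d^c\phi = \sum_{j} \phi_{x^j}\, dy^j .
\end{equation*}

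Differentiating once more gives
\begin{equation*}
dd^c\phi = \sum_{j,k} \phi_{jk}\, dx^k \wedge dy^j = \frac{i}{2}\sum_{j,k}\phi_{jk}\, dz^j\wedge d\bar z^k , \qquad \phi_{jk}:=\frac{\partial^2\phi}{\partial x^j \partial x^k},
\end{equation*}
so the whole problem is reduced to computing the top exterior power of this $(1,1)$-form. The key algebraic identity I would establish is
\begin{equation*}
\frac{(dd^c\phi)^n}{n!} = \det(\phi_{jk})\; dx^1\wedge dy^1 \wedge \cdots \wedge dx^n\wedge dy^n = MA(\phi)\, dx^1\wedge dy^1\wedge\cdots\wedge dx^n\wedge dy^n .
\end{equation*}
This follows from the standard formula that the $n$-th power of $i\sum_{j,k} b_{jk}\,dz^j\wedge d\bar z^k$ equals $n!\det(b_{jk})\prod_{j}\bigl(i\,dz^j\wedge d\bar z^j\bigr)$, combined with the pointwise relations $i\,dz^j\wedge d\bar z^j = 2\,dx^j\wedge dy^j$ and $b_{jk}=\tfrac12\phi_{jk}$; the factors $2^{n}$ and $2^{-n}$ cancel. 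I expect this step to be the main obstacle, not conceptually but because it requires careful bookkeeping of the combinatorial factor $n!$ from the determinant expansion and of the reordering sign $(-1)^{n(n-1)/2}$ incurred when the commuting $2$-forms $dz^j\wedge d\bar z^k$ are collected into the standard volume form; one can double check it by expanding $\bigl(\sum_{j,k}\phi_{jk}\,dx^k\wedge dy^j\bigr)^n$ directly, where only terms with $\{k_\ell\}$ and $\{j_\ell\}$ both permutations of $\{1,\dots,n\}$ survive and reassemble into $\det(\phi_{jk})$.

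Finally I would integrate. Because $MA(\phi)=\det(\phi_{jk})$ depends only on $x$, Fubini over the product $\mathbb{R}^n\times\mathbb{T}^n$ separates the variables, and since each circle $\mathbb{R}/\mathbb{Z}$ has length one we have $\int_{\mathbb{T}^n} dy^1\wedge\cdots\wedge dy^n = 1$. Hence
\begin{equation*}
\int_{\mathbb{R}^n\times\mathbb{T}^n} \frac{(dd^c\phi)^n}{n!} = \left(\int_{\mathbb{R}^n} MA(\phi)\, dx\right)\left(\int_{\mathbb{T}^n} dy\right) = \int_{\mathbb{R}^n} MA(\phi)\, dx,
\end{equation*}
which is the assertion. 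The strict convexity and smoothness of the $\phi_j$ enter only to guarantee that $dd^c\phi$ is a genuine positive form and that the integrals are finite (the right-hand side being the finite volume $|t_1A_1+\cdots+t_NA_N|$ of Proposition \ref{pr:phi+phi}); they play no further role.
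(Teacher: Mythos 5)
Your proposal is correct and follows essentially the same route as the paper: compute $dd^c\phi=\frac{i}{2}\sum_{j,k}\phi_{jk}\,dz^j\wedge d\bar z^k$ in coordinates, identify $\frac{(dd^c\phi)^n}{n!}$ with $\det(\phi_{jk})\,dx^1\wedge dy^1\wedge\cdots\wedge dx^n\wedge dy^n$, and conclude by Fubini together with $\int_{\mathbb{T}^n}dy=1$. Your extra intermediate step $d^c\phi=\sum_j\phi_{x^j}\,dy^j$ and the bookkeeping remarks are harmless elaborations of the same computation.
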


\begin{proof} Since
$$
dd^c\phi=2i\partial\dbar \phi =\frac{i}{2} \sum_{j,k=1}^n \phi_{jk} \,dz^j \wedge d\bar z^k, \ z^j:=x^j+iy^j, 
$$
where $\phi_{jk}:=\partial^2\phi/\partial x^j \partial x^k$, we have
$$
\frac{(dd^c\phi)^n}{n!}=\det(\phi_{jk})\, (dx^1\wedge dy^1) \wedge \cdots \wedge (dx^n\wedge dy^n),
$$
thus the lemma follows from the Fubini theorem and $\int_{{\mathbb T}^n} dy=1$.
\end{proof}

The above lemma implies

\begin{lemma}  Let $\phi_1, \cdots, \phi_n$ be smooth strictly convex functions such that each $\nabla\phi_j$ is a diffeomorphism from $\mathbb R^n$ onto the interior of a convex body $A_j$. Then we have the following mixed volume formula
$$
V(A_1,\cdots, A_n)=\int_{\mathbb R^n\times {\mathbb T}^n} dd^c\phi_1 \wedge\cdots \wedge dd^c\phi_n. 
$$
\end{lemma}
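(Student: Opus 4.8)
The plan is to combine the two immediately preceding results — Proposition \ref{pr:phi+phi} and Lemma \ref{le:28-new} — to rewrite the volume polynomial $p(t):=|t_1 A_1+\cdots +t_n A_n|$ as a single integral over $\mathbb R^n\times\mathbb T^n$, and then to read off the mixed volume as the coefficient of $t_1\cdots t_n$. Concretely, set $\phi:=\sum_{j=1}^n t_j\phi_j$ (taking $N=n$). Proposition \ref{pr:phi+phi} gives $p(t)=\int_{\mathbb R^n} MA(\phi)\,dx$, and Lemma \ref{le:28-new} identifies this with $\int_{\mathbb R^n\times\mathbb T^n}(dd^c\phi)^n/n!$. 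The hypotheses of both results are exactly our standing assumption that each $\nabla\phi_j$ is a diffeomorphism onto $A_j^\circ$, so they apply verbatim, and we obtain $p(t)=\int_{\mathbb R^n\times\mathbb T^n}(dd^c\phi)^n/n!$.

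Next I would expand the integrand in the variables $t_j$. Since $dd^c$ is $\mathbb R$-linear, $dd^c\phi=\sum_{j=1}^n t_j\,dd^c\phi_j$. The forms $dd^c\phi_j$ are $(1,1)$-forms, hence of even degree, so they commute under the wedge product and the multinomial theorem is valid:
$$(dd^c\phi)^n=\sum_{\alpha_1+\cdots+\alpha_n=n}\binom{n}{\alpha_1,\cdots,\alpha_n}\,t_1^{\alpha_1}\cdots t_n^{\alpha_n}\,(dd^c\phi_1)^{\alpha_1}\wedge\cdots\wedge(dd^c\phi_n)^{\alpha_n}.$$
The only multi-index contributing to the monomial $t_1 t_2\cdots t_n$ is $\alpha=(1,\cdots,1)$, whose multinomial coefficient is $n!$. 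Hence the coefficient of $t_1\cdots t_n$ in $(dd^c\phi)^n/n!$ is precisely $dd^c\phi_1\wedge\cdots\wedge dd^c\phi_n$, and after integrating, the coefficient of $t_1\cdots t_n$ in $p(t)$ equals $\int_{\mathbb R^n\times\mathbb T^n} dd^c\phi_1\wedge\cdots\wedge dd^c\phi_n$.

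Finally, since $p(t)$ is a homogeneous polynomial of degree $n$ (as noted in the remark after Proposition \ref{pr:phi+phi}), the operator $\partial^n/\partial t_1\cdots\partial t_n$ annihilates every monomial of total degree $n$ that is not divisible by all of $t_1,\cdots,t_n$, leaving only $t_1\cdots t_n$ and returning exactly its coefficient. Thus $V(A_1,\cdots,A_n)=\partial^n p/\partial t_1\cdots\partial t_n$ coincides with the coefficient computed above, which yields the stated formula. There is no serious obstacle here: the substantive content lives entirely in the two earlier results, and the only points requiring care are the even-degree commutativity that legitimizes the multinomial expansion and the elementary observation that the mixed partial derivative extracts precisely the $t_1\cdots t_n$ coefficient.
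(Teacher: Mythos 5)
Your proof is correct and follows essentially the same route as the paper: combine Proposition \ref{pr:phi+phi} with Lemma \ref{le:28-new}, expand $(dd^c\phi)^n/n!$ by the multinomial theorem, and extract the coefficient of $t_1\cdots t_n$ via the definition of mixed volume. The one point the paper makes explicit that you pass over silently is that each term $(dd^c\phi_1)^{\alpha_1}\wedge\cdots\wedge(dd^c\phi_n)^{\alpha_n}$ is a \emph{positive} $(n,n)$-form, so finiteness of $|\sum t_jA_j|$ forces each coefficient integral over the non-compact manifold $\mathbb R^n\times{\mathbb T}^n$ to be finite, which is what legitimizes integrating the expansion term by term and treating the result as a polynomial with finite coefficients.
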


\begin{proof} The previous lemma gives
$$
|\sum_{j=1}^n t_j A_j|=\int_{\mathbb R^n \times {\mathbb T}^n} \frac{(dd^c\phi)^n}{n!}, \ t_j >0, \ \forall \ 1\leq j\leq n.
$$
Notice that
$$
\frac{(dd^c\phi)^n}{n!}=\sum_{\alpha_1+\cdots+\alpha_n=n}  \frac{t_1^{\alpha_1} \cdots t_n^{\alpha_n}}{\alpha_1 ! \cdots \alpha_n !}  \  (dd^c\phi_1)^{\alpha_1} \wedge\cdots \wedge (dd^c\phi_n)^{\alpha_n}, 
$$
and each term $(dd^c\phi_1)^{\alpha_1} \wedge\cdots \wedge (dd^c\phi_n)^{\alpha_n}$ is a positive $(n,n)$-form, thus
$$
|\sum_{j=1}^n t_j A_j|<\infty \Rightarrow  \int_{\mathbb R^n \times {\mathbb T}^n} (dd^c\phi_1)^{\alpha_1} \wedge\cdots \wedge (dd^c\phi_n)^{\alpha_n} <\infty.
$$  
Now we have
$$
|\sum_{j=1}^n t_j A_j|=\sum_{\alpha_1+\cdots+\alpha_n=n}   \frac{t_1^{\alpha_1} \cdots t_n^{\alpha_n}}{\alpha_1 ! \cdots \alpha_n !}  \int_{\mathbb R^n \times {\mathbb T}^n} (dd^c\phi_1)^{\alpha_1} \wedge\cdots \wedge (dd^c\phi_n)^{\alpha_n},
$$
and the lemma follows.
\end{proof}

By the above lemma, we know that Theorem \ref{th:AF-m} is equivalent to the following:

\begin{theorem}\label{th:AF-m-c}  Let $\phi_1, \phi_2,\, \phi_{m+1}, \cdots, \phi_n$, $2\leq m \leq n$, be smooth strictly convex functions such that each $\nabla\phi_j$ is a diffeomorphism from $\mathbb R^n$ onto the interior of a convex body $A_j$. Then the following function is convex on $(0,1)$
$$
t\mapsto -\log  \int_{\mathbb R^n\times {\mathbb T}^n}  \frac{\omega^m}{m!} \wedge T, 
$$
where 
$$
\omega:=t dd^c\phi_1+(1-t) dd^c\phi_2, \qquad T:=dd^c\phi_{m+1}\wedge \cdots \wedge dd^c\phi_n.
$$
\end{theorem}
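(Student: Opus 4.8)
The plan is to differentiate
$$
F(t):=\int_{\mathbb R^n\times\mathbb T^n}\frac{\omega^m}{m!}\wedge T
$$
twice in $t$ and to reduce the asserted convexity of $-\log F$ to a single reverse Cauchy--Schwarz (Hodge index) inequality, which is then supplied by the Brascamp--Lieb lemma together with a mixed-norm Hörmander estimate. First I would observe that $\omega=\omega_t$ is affine in $t$, with $\partial_t\omega=\beta:=dd^c\phi_1-dd^c\phi_2$ \emph{independent} of $t$; hence $F$ is a polynomial in $t$ and differentiation under the integral sign is legitimate, all integrals converging because the $A_j$ are bounded. Introducing the symmetric bilinear form on real $(1,1)$-forms
$$
Q(\alpha,\gamma):=\int_{\mathbb R^n\times\mathbb T^n}\alpha\wedge\gamma\wedge\omega^{m-2}\wedge T,
$$
one computes $m!\,F=Q(\omega,\omega)$, $(m-1)!\,F'=Q(\omega,\beta)$ and $(m-2)!\,F''=Q(\beta,\beta)$, so that $(-\log F)''\ge0$ is equivalent, after clearing factorials, to $Q(\omega,\beta)^2\ge\tfrac{m-1}{m}\,Q(\omega,\omega)\,Q(\beta,\beta)$. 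Since $Q(\omega,\omega)=m!\,F\ge0$ and $\tfrac{m-1}{m}\le1$, it suffices to prove the constant-one inequality
$$
Q(\omega,\beta)^2\ \ge\ Q(\omega,\omega)\,Q(\beta,\beta).
$$
The point to stress is that $\beta$ is a \emph{difference} of positive forms and need not be semi-positive, so Theorem \ref{th:1} does not apply verbatim; one genuinely needs the signed bilinear version.

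Next I would turn this into a sign statement. As $Q(\omega,\omega)>0$, write $\beta=\lambda\omega+\beta_0$ with $\lambda:=Q(\omega,\beta)/Q(\omega,\omega)$, so that $\beta_0$ is \emph{globally primitive}, i.e.
$$
Q(\omega,\beta_0)=\int_{\mathbb R^n\times\mathbb T^n}\beta_0\wedge\omega^{m-1}\wedge T=0 .
$$
Expanding $Q(\beta,\beta)=\lambda^2 Q(\omega,\omega)+Q(\beta_0,\beta_0)$ and substituting, the displayed inequality collapses to the single assertion $Q(\beta_0,\beta_0)\le0$. This is exactly the integrated form of Timorin's mixed Hodge--Riemann bilinear relation: on the primitive part the form $Q$ is negative.

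The decisive and hardest step is establishing $Q(\beta_0,\beta_0)\le0$ on the \emph{non-compact} manifold $\mathbb R^n\times\mathbb T^n$. Pointwise, Timorin's linear-algebra theorem already gives $\eta\wedge\eta\wedge\omega^{m-2}\wedge T\le0$ for any \emph{pointwise} primitive $(1,1)$-form $\eta$; the obstruction is that the \emph{global} primitivity of $\beta_0$ does not force pointwise primitivity, and on a non-compact base one cannot invoke compact Hodge theory to bridge this gap. This is precisely where the Brascamp--Lieb lemma (Lemma \ref{le:BLf-new}) enters: it reduces the required negativity to the $L^2$-estimate $\|u\|\le\|\theta\|$ for the minimal solution of an equation of the type $du=(d^c)^*\theta$, where the norms are Timorin's mixed norms built from $\omega^{m-2}\wedge T$. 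I would then prove this estimate by Hörmander's $L^2$-method: using the complete Kähler metric provided by Lemma \ref{le:complete}, smooth compactly supported forms are dense and every integration by parts is boundary-term free (here completeness and finiteness of the volume are essential), so the Bochner--Kodaira--Nakano identity applies and delivers $\|u\|\le\|\theta\|$, hence $Q(\beta_0,\beta_0)\le0$.

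To handle the possible degeneracy of the merely semi-positive forms $dd^c\phi_j$, I would first add $\varepsilon$ times a fixed complete Kähler form to each, carry out the argument in the strictly positive regime where the mixed norm is genuinely elliptic, and let $\varepsilon\to0$ at the end, the finiteness of the volumes guaranteeing the convergence of all integrals. The main obstacle throughout is the passage from the pointwise Timorin relation to its integrated counterpart, that is, controlling the non-primitive component of $\beta_0$ through the mixed-norm $L^2$-estimate on a complete, finite-volume, but non-compact manifold; once that estimate is in hand, Step 1 and Step 2 assemble the convexity of $t\mapsto-\log F(t)$, which by the preceding lemmas is equivalent to Theorem \ref{th:AF-m}.
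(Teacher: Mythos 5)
Your proposal is correct, and its analytic engine is the paper's own: the $\varepsilon$-regularization to reach \eqref{eq:equ-norm}, the complete K\"ahler metric with finite volume from Lemma \ref{le:complete}, Timorin's pointwise relation, and the mixed-norm H\"ormander estimate (Theorem \ref{th:Hormander-new}); but your reduction is organized genuinely differently, and the difference is worth recording. The paper runs the Brascamp--Lieb lemma at level $m$ and proves the estimate $\|G-E_\mu(G)\|_{T,\omega}\le\|\theta\|_{T,\omega}$, which is \emph{exactly equivalent} to $f_{tt}\ge 0$ (no slack); you instead compute $F$, $F'$, $F''$ directly through the intersection form $Q(\alpha,\gamma)=\int\alpha\wedge\gamma\wedge\omega^{m-2}\wedge T$, throw away the factor $\frac{m-1}{m}$ (legitimately, as your case analysis on the sign of $Q(\beta,\beta)$ shows), and aim at the strictly stronger constant-one Hodge-index inequality, equivalently $Q(\beta_0,\beta_0)\le 0$ on the globally primitive part. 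In effect you reduce the general-$m$ theorem to the $m=2$ case with $T$ replaced by $T':=\omega^{m-2}\wedge T$, which at each fixed $t$ is again a product of K\"ahler forms comparable to $\hat\omega$ after regularization, so Theorem \ref{th:Hormander-new} applies; this is a clean and valid shortcut the paper does not take. Two remarks. First, your concern that Theorem \ref{th:1} ``does not apply verbatim'' because $\beta$ is signed is unfounded: replacing $\beta$ by $\beta+s\omega$ changes both sides of the constant-one inequality by terms that cancel identically, so the bilinear version for differences of positive forms follows formally from the positive-pair version. Second, your attribution of the final step to the Brascamp--Lieb lemma is misplaced: Lemma \ref{le:BLf-new} is only the second-derivative formula, which you have already performed by hand; what actually converts $Q(\beta_0,\beta_0)\le0$ into the $L^2$-estimate is the $T$-Hodge bookkeeping of Lemmas \ref{le:2-new} and \ref{le:3-new} --- the pointwise Lefschetz splitting $\beta_0=\eta+g\,\omega$ with $T'\wedge\eta$ primitive, the identity $\Lambda(T'\wedge\beta_0)=-T'\wedge G$ as in \eqref{eq:Gtheta-new}, and the $T$-K\"ahler identity $[d,\Lambda]=(d^c)^*$, which exhibit $T'\wedge(E_\mu(G)-G)$ as the minimal solution of $d(\cdot)=(d^c)^*(T'\wedge\beta_0)$. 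Granting that, your constants do close: since $\beta_0$ is $d$-closed with $Q(\omega,\beta_0)=0$, one gets $E_\mu(G)=0$, the norm identity reads $\|\beta_0\|^2=-\int\eta^2\wedge T'+\int g^2\,\omega^2\wedge T'$ while $\|G\|^2=2\int g^2\,\omega^2\wedge T'$, and H\"ormander's $\|G\|\le\|\beta_0\|$ unwinds precisely to $\int g^2\,\omega^2\wedge T'\le-\int\eta^2\wedge T'$, i.e.\ to $Q(\beta_0,\beta_0)\le0$. So your route buys a conceptually transparent reduction of Theorem \ref{th:AF-m-c} to a single non-compact Khovanski\u{i}-Teissier-type statement (essentially Theorem \ref{th:1} with kernel $T'$), at the cost of proving more than convexity requires, whereas the paper's formulation via $G$ proves the sharp variance estimate uniformly in $m$.
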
 

Let us recall the following Khovanski\u{i}-Teissier theorem.

\begin{theorem}[Khovanski\u{i}-Teissier inequality] Let $\omega_1,\cdots, \omega_n$ be K\"ahler forms on a compact K\"ahler manifold $X$. Assume that $n\geq 2$. Put
$$
T:=\omega_{3} \wedge \cdots \wedge \omega_n,\qquad T:=1, \ \text{if}\ n=2.
$$
Then
$$
\left(\int_X \omega_{1}\wedge \omega_2\wedge T \right)^2 \geq \left(\int_{X} \omega_{1}^2\wedge T\right)\left(\int_X \omega_{2}^2\wedge T\right).
$$
\end{theorem}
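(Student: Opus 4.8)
The plan is to deduce the inequality from the signature of the intersection form on a two-dimensional subspace of $H^{1,1}(X,\mathbb{R})$, following the Hodge-index philosophy of Gromov and Timorin. Consider the symmetric bilinear form
\begin{equation*}
Q(\alpha,\beta):=\int_X \alpha\wedge\beta\wedge T
\end{equation*}
on $H^{1,1}(X,\mathbb{R})$. The asserted inequality is exactly the statement that the $2\times 2$ Gram matrix of $Q$ with respect to the classes $[\omega_1],[\omega_2]$ has nonpositive determinant, so everything reduces to controlling the signature of $Q$.

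First I would record the one easy positivity fact: since each $\omega_j$ is a K\"ahler form, $\omega_1^2\wedge T$ is a positive $(n,n)$-form, hence $Q(\omega_1,\omega_1)=\int_X\omega_1^2\wedge T>0$. Next I would invoke the mixed Hodge-Riemann bilinear relation of Timorin \cite{Timorin98}: declaring a real class $\beta\in H^{1,1}(X,\mathbb{R})$ to be \emph{primitive} (relative to $\omega_1$ and the polarization $T$) when $Q(\omega_1,\beta)=0$, the relation asserts that $Q(\beta,\beta)=\int_X\beta^2\wedge T\le 0$, with equality only when $\beta=0$. Granting this, the remainder is pure linear algebra: writing $[\omega_2]=c\,[\omega_1]+\beta$ with $c=Q(\omega_1,\omega_2)/Q(\omega_1,\omega_1)$, so that $Q(\omega_1,\beta)=0$, one computes
\begin{equation*}
Q(\omega_1,\omega_1)\,Q(\omega_2,\omega_2)-Q(\omega_1,\omega_2)^2=Q(\omega_1,\omega_1)\,Q(\beta,\beta)\le 0,
\end{equation*}
which is precisely the claimed inequality after expanding $Q$.

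The main obstacle is the mixed Hodge-Riemann relation itself, since the distinct K\"ahler forms $\omega_3,\dots,\omega_n$ need not arise from a single polarization, and the classical Hodge-Riemann relations do not apply verbatim. I would establish it by a deformation argument. The mixed Hard Lefschetz theorem asserts that wedging with $T$ defines an isomorphism $H^{1,1}(X)\to H^{n-1,n-1}(X)$ for every choice of K\"ahler classes $\omega_3,\dots,\omega_n$; by Poincar\'e duality this makes $Q$ a nondegenerate form, so the signature of $Q$ is invariant as one deforms the tuple $(\omega_3,\dots,\omega_n)$ through K\"ahler classes to the constant tuple $(\omega_1,\dots,\omega_1)$. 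At the endpoint $T=\omega_1^{n-2}$ the classical Hodge-Riemann relations give $\int_X\beta^2\wedge\omega_1^{n-2}<0$ for every nonzero primitive real $(1,1)$-class $\beta$, so $Q$ has exactly one positive eigenvalue there; since nondegeneracy persists along the whole path, the signature, and in particular the negativity of $Q$ on the primitive subspace, cannot jump. Making this deformation precise—verifying that the primitive subspaces vary continuously and that nondegeneracy is preserved throughout—is the technical heart of the argument.
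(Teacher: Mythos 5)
Your argument is sound in outline, but it takes a genuinely different route from the paper. The paper never proves the compact Khovanski\u{i}-Teissier inequality by a signature argument on cohomology: it obtains it (via the equivalence of Lemma \ref{le:one-h}) as the $m=2$, compact case of Theorem \ref{th:main} -- on a compact $X$ one takes $\hat\omega=\omega_1+\cdots+\omega_n$, so completeness and finite volume are automatic. The paper's proof runs through the Brascamp--Lieb lemma (Lemma \ref{le:BLf-new}): the second derivative of $t\mapsto-\log\int_X\frac{\omega^2}{2}\wedge T$ is $\int_X(G_t-(G-E_\mu(G))^2)\,d\mu$, the two terms are identified with the $T$-Hodge norms $\|\theta\|^2_{T,\omega}$ and $\|G-E_\mu(G)\|^2_{T,\omega}$ (Lemma \ref{le:2-new}), and convexity follows from the H\"ormander-type estimate $\|G-E_\mu(G)\|_{T,\omega}\le\|\theta\|_{T,\omega}$ for the minimal solution of $d(\cdot)=(d^c)^*(T\wedge\theta)$ (Lemma \ref{le:3-new}, Theorem \ref{th:Hormander-new}). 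That route needs only Timorin's \emph{pointwise} (linear-algebra) mixed Hodge--Riemann relations, to make $\|\cdot\|_{T,\omega}$ a norm, plus a Bochner--Kodaira identity and completeness; this is exactly why it extends to non-compact complete finite-volume manifolds, which is the paper's main point. Your route is the global cohomological one -- positivity of $Q(\omega_1,\omega_1)$, MHRR negativity on the primitive hyperplane, $2\times2$ Gram determinant -- which the paper's own remark attributes to the literature: MHRR for compact K\"ahler manifolds is due to Dinh--Nguy\^en \cite{DN06}, building on \cite{Timorin98}, with another approach in \cite{Cattani08}. What your approach buys is brevity and a clean equality analysis ($[\omega_2]$ proportional to $[\omega_1]$); what it loses is any non-compact statement, since it lives on the finite-dimensional $H^{1,1}$ and uses Poincar\'e duality.

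One caveat on where the depth hides in your sketch. You invoke the mixed hard Lefschetz theorem to get nondegeneracy of $Q$ along the deformation, but for mixed tuples $(\omega_3,\dots,\omega_n)$ this is itself the deep theorem of \cite{DN06}, whose proof is intertwined with the very MHRR you are deriving; asserting it wholesale and then ``deducing'' MHRR by signature constancy risks circularity unless you cite \cite{DN06} outright, in which case your argument is complete but essentially reproduces Dinh--Nguy\^en. The point you flag as delicate -- continuity of the primitive subspaces -- is actually the easy part: since $Q_t(\omega_1,\omega_1)>0$ along the whole path, $H^{1,1}=\mathbb{R}[\omega_1]\oplus P_t$ varies continuously. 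The real issue is closedness: if $Q_t|_{P_t}<0$ for $t<\bar t$ and degenerates at $\bar t$, any null vector $\beta\in P_{\bar t}$ is forced (by Cauchy--Schwarz for the semidefinite form, plus primitivity) into the kernel of $Q_{\bar t}$ on all of $H^{1,1}$, so ruling it out \emph{is} mixed hard Lefschetz in bidegree $(1,1)$. Nor can you dodge this by settling for the non-strict inequality: a positive direction can emerge through a kernel (consider $\mathrm{diag}(1,s)$ as $s$ crosses $0$), so semidefiniteness alone does not propagate along the path. Either cite \cite{DN06} for mixed hard Lefschetz, or supply the simultaneous induction; as written, that step is a gap in an otherwise correct classical argument.
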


By Lemma \ref{le:one-h}, we know that the Khovanski\u{i}-Teissier inequality is equivalent to the ($m=2$ case) convexity of
$$
t\mapsto -\log  \int_{X}  \frac{\omega^m}{m!} \wedge T,  \qquad \omega:=t\omega_1 +(1-t)\omega_2, \ T:=\omega_{m+1}\wedge \cdots\wedge \omega_n.
$$
Thus Theorem \ref{th:AF-m-c} can be seen as a Khovanski\u{i}-Teissier inequality for $\mathbb R^n\times {\mathbb T}^n$.

\medskip

\textbf{Remark}: The above equivalent description of the Khovanski\u{i}-Teissier inequality was first used by Graham in his proof of the convexity of the interpolating function, see \cite{Graham}. There are also other descriptions of the Khovanski\u{i}-Teissier inequality.  A very nice intersection theory description of its algebraic version can be found in  \cite{Khovanskii88} and \cite{KK12}. In the Hodge theory description, the Khovanski\u{i}-Teissier inequality is a direct application of the \emph{mixed generalization of the classical Hodge-Riemann bilinear relation} (MHRR) for $(1,1)$-forms.  MHRR for general $(p,q)$-forms on a compact K\"ahler manifold was first proved by Dinh-Nguy\^en in \cite{DN06} based on Timorin's result \cite{Timorin98} for the torus case, see also \cite{Cattani08} for another approach that applies to general polarized Hodge-Lefschetz modules. 

\section{Main theorem} 

\begin{theorem}\label{th:main} Let $(X, \hat \omega)$ be an $n$-dimensional complete K\"ahler manifold with finite volume. Let $\alpha_{1}, \alpha_2, \alpha_m, \cdots, \alpha_n$, $2\leq m\leq n$, be smooth $d$-closed semi-positive $(1,1)$-forms  such that each $\alpha_{j}\leq \hat \omega$ on $X$. Then the following function is convex on $(0,1)$ 
$$
t\mapsto -\log\int_X \frac{\omega^m}{m!} \wedge T, \qquad \omega:=t\alpha_1+(1-t)\alpha_2,
$$
where $T:=\alpha_{m+1} \wedge \cdots \wedge \alpha_n,\ T:=1, \ \text{if}\ n=m$.
\end{theorem}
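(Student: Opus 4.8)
The plan is to differentiate twice in $t$, reduce the convexity to a single Hodge-index inequality, and then prove that inequality by combining Timorin's pointwise mixed Hodge--Riemann relation with a mixed-norm $L^2$-estimate on the complete manifold $X$.

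First I would set $\omega:=\omega_t$ and $\beta:=\alpha_1-\alpha_2$, a $d$-closed $(1,1)$-form independent of $t$ with $\tfrac{d}{dt}\omega=\beta$, and write $I(t):=\int_X \frac{\omega^m}{m!}\wedge T$. Since $\omega$ is affine in $t$, differentiation under the integral sign gives
\[
I'(t)=\int_X \frac{\omega^{m-1}}{(m-1)!}\wedge\beta\wedge T,\qquad I''(t)=\int_X \frac{\omega^{m-2}}{(m-2)!}\wedge\beta^2\wedge T .
\]
After the regularisation below we may assume $I(t)>0$, so that $-\log I$ is convex on $(0,1)$ precisely when $I''I\le (I')^2$. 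Introducing the symmetric bilinear form $\langle a,b\rangle:=\int_X \frac{\omega^{m-2}}{(m-2)!}\wedge a\wedge b\wedge T$ on $d$-closed $(1,1)$-forms, one has $I''=\langle\beta,\beta\rangle$, $I'=\tfrac{1}{m-1}\langle\omega,\beta\rangle$ and $I=\tfrac{1}{m(m-1)}\langle\omega,\omega\rangle$, so the desired inequality becomes $\langle\beta,\beta\rangle\,\langle\omega,\omega\rangle\le \tfrac{m}{m-1}\langle\omega,\beta\rangle^2$. As $\tfrac{m}{m-1}\ge 1$, it suffices to prove the reverse Cauchy--Schwarz (Hodge-index) inequality $\langle\omega,\beta\rangle^2\ge\langle\omega,\omega\rangle\langle\beta,\beta\rangle$.

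Setting $c:=\langle\omega,\beta\rangle/\langle\omega,\omega\rangle$ and $\gamma:=\beta-c\,\omega$, the form $\gamma$ is $d$-closed and \emph{integrated-primitive}, $\langle\omega,\gamma\rangle=0$, and since $\langle\omega,\omega\rangle>0$ the Hodge-index inequality is equivalent to $\langle\gamma,\gamma\rangle\le 0$. The key structural point is that $\langle\cdot,\cdot\rangle$ is unchanged under $\gamma\mapsto\gamma+dd^c\psi$: because $\omega$ and $T$ are $d$-closed, the difference is a sum of integrals of exact $(2n)$-forms, and these vanish by Stokes' theorem provided the boundary integrals over $\{\rho=R\}$ tend to $0$. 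Here the completeness of $\hat\omega$ (the function $\rho$ with $|d\rho|_{\hat\omega}\le 1$), the finite volume, and the uniform bound $\alpha_j\le\hat\omega$ are exactly what make the cut-off argument valid. Pointwise, Timorin's mixed Hodge--Riemann bilinear relation asserts that for a form $\gamma_0$ that is \emph{pointwise} primitive with respect to $\omega$ and $T$ one has $\frac{\omega^{m-2}}{(m-2)!}\wedge\gamma_0^2\wedge T\le 0$; thus if we could choose $\psi$ making $\gamma+dd^c\psi$ pointwise primitive we would be done.

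Making this rigorous on the \emph{non-compact} $X$ is the analytic core and the step I expect to be the main obstacle. Decomposing $\gamma=\gamma_0+f\,\omega$ into its pointwise-primitive part $\gamma_0$ and trace $f$, a direct computation (using primitivity to kill the cross term) gives $\langle\gamma,\gamma\rangle=-\|\gamma_0\|^2+m(m-1)\|f\|^2$ in Timorin's mixed norm, where $f$ has mean zero because $\langle\omega,\gamma\rangle=0$, while $d\gamma=0$ forces $d\gamma_0=-df\wedge\omega$. By the Brascamp--Lieb lemma (Lemma \ref{le:BLf-new}) the positive trace term $m(m-1)\|f\|^2$ is dominated by the energy of the minimal solution $u$ of $du=(d^c)^*\theta$, with $\theta$ built from $\gamma_0$, so that $\langle\gamma,\gamma\rangle\le 0$ would follow from the mixed-norm $L^2$-estimate $\|u\|\le\|\theta\|$. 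The hard part is establishing this estimate: I would run H\"ormander's $L^2$-existence theory in Timorin's mixed norm, the solvability and the norm bound being supplied by a complete K\"ahler structure in the spirit of Lemma \ref{le:complete} together with the semi-positivity $\alpha_j\ge 0$, with $\alpha_j\le\hat\omega$ keeping all mixed norms finite and completeness again licensing the integration by parts behind the estimate. Finally, since the $\alpha_j$ are merely semi-positive, I would first prove everything for the strictly positive regularisations $\alpha_j+\varepsilon\hat\omega$ (which also secures $I>0$) and let $\varepsilon\to 0$, convexity being stable under pointwise limits.
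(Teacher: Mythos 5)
Your proposal is correct in outline and, once unwound, is essentially the paper's proof in a different packaging. You differentiate $I(t)$ directly and reduce convexity to the Hodge-index inequality $\langle\omega,\beta\rangle^2\geq\langle\omega,\omega\rangle\langle\beta,\beta\rangle$ for the mixed form $\langle a,b\rangle=\int_X\frac{\omega^{m-2}}{(m-2)!}\wedge a\wedge b\wedge T$; the paper instead runs the Brascamp--Lieb lemma (Lemma \ref{le:BLf-new}) to get $f_{tt}=\int_X(G_t-(G-E_\mu(G))^2)\,d\mu$ (Lemma \ref{le:1-new}) and converts both terms into Timorin norms (Lemma \ref{le:2-new}). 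These are the same computation: your subtraction of $c\,\omega$ and your mean-zero trace $f$ correspond exactly to the paper's centered $G$, since $\gamma_0=\theta_0$ and $G-E_\mu(G)=-mf$, and your target $m(m-1)\|f\|^2\leq\|\gamma_0\|^2$ is the estimate of Lemma \ref{le:3-new} applied with datum $\gamma$ in place of $\theta$ (legitimate: $\gamma$ is $d$-closed and $(d^c)^*(T\wedge c\,\omega)=d(mc\,T)=0$, so the equation is unchanged), after expanding $\|\gamma\|^2_{T,\omega}=\|\gamma_0\|^2+m\|f\|^2$. One detail you gloss over: differentiation under $\int_X$ on the non-compact $X$ is justified in the paper by observing that $\frac{\omega^m}{m!}\wedge T$ is a polynomial in $t$ whose coefficient forms have finite integrals by \eqref{eq:equ-norm} and finite volume; your regularisation supplies the same, but the remark is needed.

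One step is garbled as stated, though the repair is exactly the paper's Lemma \ref{le:3-new} plus Theorem \ref{th:Hormander-new}. The Brascamp--Lieb lemma cannot ``dominate the trace term'': in your reformulation the $t$-derivatives have already been taken, and Lemma \ref{le:BLf-new} is only the calculus identity behind the variance formula, which your direct differentiation has replaced. The correct mechanism is the $T$-K\"ahler identity $[d,\Lambda]=(d^c)^*$: since $d(T\wedge\gamma)=0$ and $\Lambda(T\wedge\gamma)=mf\,T$, the form $T\wedge mf$ solves $d(\cdot)=(d^c)^*(T\wedge\gamma)$, and it is the $L^2$-minimal solution because $f$ has $\mu$-mean zero, hence is perpendicular to the constants that constitute $\ker d$ in $V_T^0$ (constants are in $L^2$ precisely because the volume is finite). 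Moreover the datum in the $L^2$-estimate must be the full form $T\wedge\gamma$, not something ``built from $\gamma_0$'': the estimate $\|T\wedge mf\|\leq\|T\wedge\gamma\|$ reads $m^2\|f\|^2\leq\|\gamma_0\|^2+m\|f\|^2$, and it is the absorption of the trace part of $\|\gamma\|^2$ that yields the factor $m(m-1)$ you need; with $\gamma_0$ alone the bookkeeping fails. Your $dd^c\psi$-gauge paragraph is a detour you rightly abandon --- no global $\psi$ making $\gamma+dd^c\psi$ pointwise primitive is available, and neither the paper nor your final argument uses it. With these repairs your plan coincides with the paper's: Timorin's pointwise relation gives $\langle\gamma_0,\gamma_0\rangle=-\|\gamma_0\|^2\leq 0$ (strict positivity of the forms is needed here, so the $\varepsilon\hat\omega$-regularisation enters at this point too), and the a priori Bochner--Kodaira identity in $T$-Hodge theory together with the completeness cut-off argument of the appendix supplies the H\"ormander bound of Theorem \ref{th:Hormander-new}.
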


By Lemma \ref{le:one-h}, in case $m=2$, our main theorem is equivalent to Theorem \ref{th:1}, which is a non-compact generalization of the Khovanski\u{i}-Teissier inequality.

\medskip

\textbf{About the proof of the main theorem}. Put
$$
f(t)= -\log\int_X \frac{\omega^m}{m!} \wedge T.
$$
Consider $\alpha_j+\epsilon \hat \omega $ instead of $\alpha_j$ and denote by $f^{\epsilon}$ the associated function. Then we have
$$
f=\lim_{\epsilon \to 0}  f^{\epsilon}.
$$
Thus it suffices to show that each $f^{\epsilon}$ is convex on $(0,1)$, i.e. one may assume that 
\begin{equation}\label{eq:equ-norm}
\frac{\hat \omega}{C} \leq \alpha_j \leq C \hat \omega,  
\end{equation}
for every $j$ in Theorem \ref{th:main}, where $C$ is a fixed positive constant. Then Theorem \ref{th:main} follows from the following three lemmas.

\begin{lemma}\label{le:1-new} Assume that \eqref{eq:equ-norm} is true. Define $G$ on $X$ such that 
$$
\frac{d}{dt}\left (\frac{\omega^m}{m!} \wedge T\right) =-G\, \frac{\omega^m}{m!} \wedge T.
$$
Then
$$
f_{tt}:=\frac{d^2 f} {dt^2} =\int_{X} \left(G_t- (G-E_\mu(G))^2 \right) \, d\mu,
$$
where
$$
d\mu:= \frac{ \frac{\omega^{m}}{m!} \wedge T}{ \int_{X} \frac{\omega^{m}}{m!} \wedge T },  \ E_\mu(G):= \int_{X} G \, d\mu.
$$
\end{lemma}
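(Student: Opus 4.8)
The plan is to reduce the statement to a direct logarithmic-derivative computation for
$$
I(t):=\int_X \frac{\omega^m}{m!}\wedge T, \qquad f=-\log I.
$$
First I would check that $G$ is a well-defined smooth function. Under \eqref{eq:equ-norm} the form $\omega=t\alpha_1+(1-t)\alpha_2$ satisfies $\omega\geq \hat\omega/C$, so $\omega$ is K\"ahler and $\frac{\omega^m}{m!}\wedge T$ is a strictly positive top-degree form; writing it as $h(t,\cdot)\,\hat\omega^n/n!$ with $h>0$ smooth in $(t,x)$, the defining equation for $G$ reads $\partial_t h=-Gh$, i.e. $G=-\partial_t\log h$, which is smooth. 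The two-sided bound \eqref{eq:equ-norm} together with the finite-volume assumption yields, for $t$ in any compact subinterval of $(0,1)$, integrable majorants for $h$, $\partial_t h$ and $\partial_t^2 h$ that are uniform in $t$; this is what legitimizes differentiating under the integral sign below.

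I would then compute the first derivative. Differentiating under the integral and using $\partial_t\big(\frac{\omega^m}{m!}\wedge T\big)=-G\,\frac{\omega^m}{m!}\wedge T$,
$$
I_t=\int_X \partial_t\Big(\frac{\omega^m}{m!}\wedge T\Big)=-\int_X G\,\frac{\omega^m}{m!}\wedge T,
$$
so that $I_t/I=-E_\mu(G)$ and hence $f_t=-I_t/I=E_\mu(G)$. For the second derivative I differentiate $I_t$ once more, applying the product rule to $G\cdot\big(\frac{\omega^m}{m!}\wedge T\big)$ and invoking the defining equation a second time:
$$
I_{tt}=-\int_X G_t\,\frac{\omega^m}{m!}\wedge T+\int_X G^2\,\frac{\omega^m}{m!}\wedge T,
$$
i.e. $I_{tt}/I=-E_\mu(G_t)+E_\mu(G^2)$.

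Combining these through $f_{tt}=-I_{tt}/I+(I_t/I)^2$ and using $(I_t/I)^2=E_\mu(G)^2$ gives
$$
f_{tt}=E_\mu(G_t)-\big(E_\mu(G^2)-E_\mu(G)^2\big),
$$
and the elementary identity $E_\mu(G^2)-E_\mu(G)^2=E_\mu\big((G-E_\mu(G))^2\big)$ rewrites the right-hand side as $\int_X\big(G_t-(G-E_\mu(G))^2\big)\,d\mu$, which is the claim. The only genuine subtlety, and the step I expect to demand the most care, is the justification of differentiation under the integral sign on the non-compact $X$; everything else is routine. This is precisely where finiteness of the volume and the comparability \eqref{eq:equ-norm} enter, supplying the dominating integrable functions uniform in $t$.
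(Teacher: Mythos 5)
Your proof is correct, and its core computation ($f_t=E_\mu(G)$, then $f_{tt}=E_\mu(G_t)-E_\mu\bigl((G-E_\mu(G))^2\bigr)$ via the variance identity) is exactly the Brascamp--Lieb computation of Lemma \ref{le:BLf-new}, which the paper also uses. Where you genuinely diverge from the paper is on the one delicate point you rightly flagged: justifying the interchange of $\frac{d}{dt}$ and $\int_X$ on the non-compact $X$. You argue by dominated convergence, and your asserted majorants do exist, though you should spell them out: writing $\theta=\alpha_1-\alpha_2$, the derivatives $\partial_t^k\bigl(\frac{\omega^m}{m!}\wedge T\bigr)$ for $k=0,1,2$ are $\theta^k\wedge\frac{\omega^{m-k}}{(m-k)!}\wedge T$, which expand into signed sums of wedge products of the semipositive forms $\alpha_j\leq C\hat\omega$ and are therefore bounded, uniformly in $t\in(0,1)$, by a constant multiple of $\hat\omega^n/n!$; finite volume makes that constant integrable (and \eqref{eq:equ-norm} also bounds $h$ from below, so $G$ and $G_t$ are bounded, settling integrability of the final integrand). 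The paper takes a slicker route to the same interchange: since $\omega=t\alpha_1+(1-t)\alpha_2$, the form $\frac{\omega^m}{m!}\wedge T$ is a polynomial in $t$ of degree $m$ with $(n,n)$-form coefficients $\Omega_j$; \eqref{eq:equ-norm} and finite volume make each $\int_X\Omega_j$ finite, so $e^{-f}$ is \emph{literally a polynomial in $t$}, the commutation of $\int_X$ with $\frac{d}{dt}$ is automatic, and Lemma \ref{le:BLf-new} then applies verbatim. The paper's argument buys a stronger structural fact (polynomiality of $e^{-f}$, with nothing to dominate beyond finitely many finite coefficients); yours is marginally more robust, since domination would survive a non-polynomial dependence on $t$ --- but for this lemma both rest on the same two hypotheses, \eqref{eq:equ-norm} and finite volume, used in essentially the same way.
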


\begin{lemma}\label{le:2-new} Assume that \eqref{eq:equ-norm} is true. Then
\begin{equation}\label{eq:Gt-new}
\int_{X} G_t \, d\mu= e^f ||\theta||^2_{T, \omega}\, , \qquad \theta:=\frac{d}{dt} \omega= \alpha_1-\alpha_2,
\end{equation}
and
\begin{equation}\label{eq:G-new}
\int_{X} (G-E_\mu(G))^2 \, d\mu= e^f ||G-E_\mu(G)||^2_{T, \omega},
\end{equation}
where $||\cdot||_{T,\omega}$ denotes the $T$-Hodge theory norm (see Definition \ref{de:T-norm}). Moreover, 
\begin{equation}\label{eq:Gtheta-new}
T\wedge G=-\Lambda(T\wedge \theta),
\end{equation}
where $\Lambda$ denotes the adjoint of $\omega \wedge\cdot$ in $T$-Hodge theory.
\end{lemma}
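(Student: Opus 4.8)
The plan is to reduce all three identities to pointwise computations in Timorin's mixed ($T$-twisted) Hodge theory and then integrate. The standing assumption \eqref{eq:equ-norm} guarantees that $\omega$ is strictly positive and each $\alpha_j$ strictly positive, so that at every point of $X$ the $T$-Hodge theory reduces, after simultaneous diagonalization, to ordinary Hodge theory on an $m$-dimensional Kähler vector space, the operator $\Lambda$ of Definition \ref{de:T-norm} behaves as the trace with $\Lambda\omega=m$, and the norm $||\cdot||_{T,\omega}$ is finite.

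First I would compute $G$ and obtain \eqref{eq:Gtheta-new}. Since $\dot\omega:=\frac{d}{dt}\omega=\theta$, we have $\frac{d}{dt}\big(\frac{\omega^m}{m!}\wedge T\big)=\theta\wedge\frac{\omega^{m-1}}{(m-1)!}\wedge T$; comparing with the definition of $G$ gives $G\,\frac{\omega^m}{m!}\wedge T=-\,\theta\wedge\frac{\omega^{m-1}}{(m-1)!}\wedge T$. Writing the pointwise primitive decomposition $\theta=\theta_0+\frac{\Lambda\theta}{m}\,\omega$ with $\theta_0$ primitive with respect to $\omega$ in the $T$-theory, and using that $\frac{\omega^{m-1}}{(m-1)!}\wedge\theta_0\wedge T=0$ for a primitive $(1,1)$-form, I get $\theta\wedge\frac{\omega^{m-1}}{(m-1)!}\wedge T=(\Lambda\theta)\,\frac{\omega^m}{m!}\wedge T$, hence $G=-\Lambda\theta$ pointwise. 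At the level of $(n-m,n-m)$-forms this is exactly the form-level statement \eqref{eq:Gtheta-new}, $T\wedge G=-\Lambda(T\wedge\theta)$.

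Next I would differentiate $G\,\frac{\omega^m}{m!}\wedge T=-\theta\wedge\frac{\omega^{m-1}}{(m-1)!}\wedge T$ once more in $t$, using $\dot\omega=\theta$, $\dot\theta=0$, $\dot T=0$ and the definition of $G$ on the left-hand side, which produces
\begin{equation*}
(G_t-G^2)\,\tfrac{\omega^m}{m!}\wedge T=-\,\theta^2\wedge\tfrac{\omega^{m-2}}{(m-2)!}\wedge T,\qquad \theta^2:=\theta\wedge\theta.
\end{equation*}
Now I invoke the pointwise mixed Hodge-Riemann bilinear relation of Timorin \cite{Timorin98} for primitive $(1,1)$-forms, namely $-\,\theta_0^2\wedge\frac{\omega^{m-2}}{(m-2)!}\wedge T=|\theta_0|^2_{T,\omega}\,\frac{\omega^m}{m!}\wedge T$, together with $\omega\wedge\theta_0\wedge\frac{\omega^{m-2}}{(m-2)!}\wedge T=0$ and $\omega^2\wedge\frac{\omega^{m-2}}{(m-2)!}=m(m-1)\frac{\omega^m}{m!}$. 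Substituting $\theta=\theta_0+\frac{\Lambda\theta}{m}\omega$ and collecting terms gives $-\theta^2\wedge\frac{\omega^{m-2}}{(m-2)!}\wedge T=\big(|\theta_0|^2_{T,\omega}-\frac{m-1}{m}(\Lambda\theta)^2\big)\frac{\omega^m}{m!}\wedge T$. Combining this with $G^2=(\Lambda\theta)^2$, the $(\Lambda\theta)^2$ contributions add to give the coefficient $\frac1m$, and by the orthogonal decomposition $|\theta|^2_{T,\omega}=|\theta_0|^2_{T,\omega}+\frac1m(\Lambda\theta)^2$ (using $|\omega|^2_{T,\omega}=m$) I obtain the pointwise identity $G_t\,\frac{\omega^m}{m!}\wedge T=|\theta|^2_{T,\omega}\,\frac{\omega^m}{m!}\wedge T$. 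Integrating over $X$, dividing by $\int_X\frac{\omega^m}{m!}\wedge T$, and recalling $e^f=(\int_X\frac{\omega^m}{m!}\wedge T)^{-1}$ and $d\mu=e^f\,\frac{\omega^m}{m!}\wedge T$ yields \eqref{eq:Gt-new}.

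Finally, \eqref{eq:G-new} is immediate: since $h:=G-E_\mu(G)$ is a function ($0$-form), the $T$-Hodge norm of Definition \ref{de:T-norm} reduces to $||h||^2_{T,\omega}=\int_X h^2\,\frac{\omega^m}{m!}\wedge T$, and multiplying $\int_X h^2\,d\mu$ by $\int_X\frac{\omega^m}{m!}\wedge T$ produces the claimed $e^f$ factor. The main obstacle is the third paragraph: one must make the $T$-twisted Hodge theory precise enough that the primitive decomposition and the Hodge-Riemann identity hold fiberwise with exactly the same constants as in the untwisted $m$-dimensional model, and that the pointwise norm appearing there is precisely the one integrated to define $||\cdot||_{T,\omega}$. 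This is the linear-algebraic content of Timorin's relation, and its integration over $X$ is the ``integration of Timorin's mixed Hodge-Riemann bilinear relation'' advertised in the introduction; finiteness of all the integrals is ensured by \eqref{eq:equ-norm}.
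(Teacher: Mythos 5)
Your proposal is correct and is essentially the paper's own argument: both extract the primitive decomposition $\theta=\theta_0+\tfrac{\Lambda\theta}{m}\,\omega$ (with $T\wedge\theta_0$ primitive, forced by the definition of $G$), differentiate in $t$, and use the Lefschetz star formalism to reach the pointwise identity $G_t=|\theta|^2_{T,\omega}$, the only cosmetic difference being that the paper packages your expansion of $\theta^2$ and the recombination $|\theta|^2_{T,\omega}=|\theta_0|^2_{T,\omega}+\tfrac1m(\Lambda\theta)^2$ into the single form $\theta'$ satisfying $T\wedge\theta'=*(T\wedge\theta)$, so that $G_t\,\tfrac{\omega^m}{m!}\wedge T=\theta\wedge\overline{*(T\wedge\theta)}$ appears in one step. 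One small caveat: your opening heuristic that \eqref{eq:equ-norm} lets the $T$-theory ``reduce after simultaneous diagonalization'' to an $m$-dimensional K\"ahler model is not literally true (one cannot in general simultaneously diagonalize $\omega,\alpha_{m+1},\dots,\alpha_n$, which is why Timorin's results are nontrivial), but this is harmless because your actual computations use only $\Lambda(T\wedge\omega)=mT$, the primitive decomposition, and the star formula on primitive forms, all supplied by Theorem \ref{th: T-lef} and Definitions \ref{de:lef-star}, \ref{de:hodge-star}, with Theorem \ref{th: T-metric} needed only for positivity of the norm, not for the identities themselves.
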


\begin{lemma}\label{le:3-new} Assume that \eqref{eq:equ-norm} is true. Then $T\wedge (E_\mu(G)-G)$ is the $L^2$-minimal solution of 
$$
d(\cdot)=(d^c)^*(T\wedge \theta),
$$
with respect to the $T$-Hodge theory norm and 
$$
||G-E_\mu(G)||_{T, \omega}\leq ||\theta||_{T, \omega}.
$$
\end{lemma}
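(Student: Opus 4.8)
The plan is to treat the two assertions in turn: first verify that $u:=T\wedge(E_\mu(G)-G)$ solves $d(u)=(d^c)^*(T\wedge\theta)$, then show it is orthogonal to $\ker d$ so that it is the minimal solution, and finally bound its norm by $\|\theta\|_{T,\omega}$ via an $L^2$-estimate of H\"ormander type.

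\emph{The equation.} The basic tool is the K\"ahler identity in the $T$-Hodge theory, namely
\begin{equation*}
(d^c)^*=d\Lambda-\Lambda d,
\end{equation*}
which one reads off from the standard commutation relations $[\Lambda,\dbar]=-i\partial^*$ and $[\Lambda,\partial]=i\dbarstar$ together with $d^c=i\dbar-i\partial$. Since $T$ and $\theta$ are $d$-closed, so is $T\wedge\theta$, and therefore
\begin{equation*}
(d^c)^*(T\wedge\theta)=d\Lambda(T\wedge\theta)-\Lambda\,d(T\wedge\theta)=d\Lambda(T\wedge\theta).
\end{equation*}
By the last identity of Lemma \ref{le:2-new}, $\Lambda(T\wedge\theta)=-T\wedge G$, so $(d^c)^*(T\wedge\theta)=-d(T\wedge G)$. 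Because $E_\mu(G)$ is a constant and $dT=0$, we have $d(T\wedge E_\mu(G))=0$, whence $d(T\wedge(E_\mu(G)-G))=-d(T\wedge G)=(d^c)^*(T\wedge\theta)$. This already realizes $u$ as a solution.

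\emph{Minimality.} The $L^2$-minimal solution is the unique one orthogonal to $\ker d$. Writing $u=\Lambda(T\wedge\theta)+E_\mu(G)\,T$ and using the adjoint relation $\langle\Lambda(\cdot),\beta\rangle_{T,\omega}=\langle\cdot,\omega\wedge\beta\rangle_{T,\omega}$, I would pair $u$ against an arbitrary $d$-closed form $\beta$ of the same bidegree: the term coming from $\Lambda(T\wedge\theta)$ is handled by the $T$-weighted K\"ahler identities together with $d\beta=0$, while the constant $E_\mu(G)$ is chosen precisely so that the remaining pairing against the closed form $T$ (the ``mean'' contribution, $\int_X(E_\mu(G)-G)\,d\mu=0$) vanishes as well. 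Hence $u\perp\ker d$, so $u$ is the minimal solution.

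\emph{The estimate, and the main obstacle.} By completeness of $(X,\hat\omega)$ (the function $\rho$ of the Remark, cf. Lemma \ref{le:complete}) one may apply H\"ormander's $L^2$-existence theorem \cite{Hormander66, Demailly82}: the minimal solution $u$ of $du=v$, with $v:=(d^c)^*(T\wedge\theta)$, satisfies $\|u\|_{T,\omega}=\sup_\beta|\langle v,\beta\rangle_{T,\omega}|/\|d^*\beta\|_{T,\omega}$, the supremum being over $\beta\in\mathrm{Dom}(d^*)$. Using the adjoint, $\langle v,\beta\rangle_{T,\omega}=\langle T\wedge\theta,\,d^c\beta\rangle_{T,\omega}$, so it suffices to establish the mixed-norm inequality
\begin{equation*}
|\langle T\wedge\theta,\,d^c\beta\rangle_{T,\omega}|\le\|\theta\|_{T,\omega}\,\|d^*\beta\|_{T,\omega}.
\end{equation*}
This is the heart of the matter and the step I expect to be hardest: after Cauchy--Schwarz one must bound the relevant Lefschetz component of $d^c\beta$ by $\|d^*\beta\|_{T,\omega}$, and simultaneously recognize $\|\theta\|_{T,\omega}$ (Definition \ref{de:T-norm}) as the correct pointwise norm of $T\wedge\theta$. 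Both are fibrewise statements in the $T$-Hodge structure, and the positivity that makes them hold is exactly Timorin's mixed Hodge--Riemann bilinear relation \cite{Timorin98, Wang17}; the completeness hypothesis is what legitimizes the integration by parts and the density of compactly supported forms needed to pass from the pointwise inequality to the global $L^2$-estimate on the non-compact manifold $X$.
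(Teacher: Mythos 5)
Your treatment of the equation and of minimality is essentially the paper's: $d(T\wedge(E_\mu(G)-G))=d\Lambda(T\wedge\theta)=[d,\Lambda](T\wedge\theta)=(d^c)^*(T\wedge\theta)$, using \eqref{eq:Gtheta-new} from Lemma \ref{le:2-new}, $d$-closedness of $T\wedge\theta$, and the $T$-Hodge K\"ahler identity; and minimality comes down to orthogonality to $\ker d$, which in the relevant degree consists of constant multiples of $T$ (injectivity of $T\wedge\cdot$ plus finite volume), so the mean-zero normalization $\int_X(E_\mu(G)-G)\,d\mu=0$ suffices. One caveat: the identity $(d^c)^*=[d,\Lambda]$ cannot be ``read off from the standard commutation relations,'' because here $\Lambda$ and the adjoints are taken with respect to the $T$-Hodge structure built from the Lefschetz star $*_s$, not the usual metric; it is a theorem (section 4 of \cite{Wang17}) resting on Timorin's mixed hard Lefschetz. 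Since it is citable, this is a presentational flaw rather than a gap.

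The genuine gap is the estimate $\|G-E_\mu(G)\|_{T,\omega}\le\|\theta\|_{T,\omega}$, which you reduce to $|\langle T\wedge\theta,\,d^c\beta\rangle|\le\|\theta\|_{T,\omega}\,\|d^*\beta\|_{T,\omega}$ and then leave open, with a misdiagnosis of what proves it. That inequality is not fibrewise: pointwise there is no control of $d^c\beta$ by $d^*\beta$, and Timorin's mixed Hodge--Riemann relation (Theorem \ref{th: T-metric}) supplies only the positive-definiteness of the norm. The actual mechanism is the global Bochner--Kodaira--Nakano identity in $T$-Hodge theory, $\|d(T\wedge\alpha)\|^2+\|d^*(T\wedge\alpha)\|^2=\|d^c(T\wedge\alpha)\|^2+\|(d^c)^*(T\wedge\alpha)\|^2$ (Theorem 4.8 of \cite{Wang17}), an integrated identity valid a priori only for compactly supported forms; this is the a priori estimate behind Theorem \ref{th:Hormander-new}, which the paper applies with $\hat\omega=\omega$ --- note that $\omega=t\alpha_1+(1-t)\alpha_2$ is itself complete by \eqref{eq:equ-norm}, so your appeal to completeness of $(X,\hat\omega)$ is aimed at the wrong metric. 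Moreover, on the non-compact $X$ the Riesz/duality step only yields $(dd^*+d^*d)(T\wedge\beta)=(d^c)^*(T\wedge\theta)$; one must still prove $d^*d(T\wedge\beta)=0$ (Lemma \ref{le:last-1}), and that is where the hard analysis sits: the cutoff argument with $\chi(\varepsilon\rho)$, the limit \eqref{eq:estimate}, and the appendix comparison of the $T$-Hodge norm with the usual norm (Lemma \ref{le:c2}), which is precisely where the hypothesis \eqref{eq:equ-norm} is used. None of this appears in your proposal, so the half of the lemma carrying the analytic content remains unproved.
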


\section{Brascamp-Lieb lemma}

We shall use the Brascamp-Lieb lemma to prove Lemma \ref{le:1-new}. 

\subsection{Brascamp-Lieb proof of the Pr\'ekopa theorem}

The following Pr\'ekopa theorem was found by Pr\'ekopa around 1973. 

\begin{theorem}[Pr\'ekopa's theorem \cite{Prekopa73}] Let $\phi$ be a smooth, strictly convex function of $(t,x)$ in $\mathbb R^{n+1}$.  Then
\begin{equation}\label{eq:BM2}
t\mapsto -\log \int_{A} e^{-\phi(t, x)}\, d\lambda(x),
\end{equation}
is strictly convex on $\mathbb R$, where $A$ is a fixed convex body in $\mathbb R^n$ and $d\lambda(x)$ denotes the Lebesgue measure.
\end{theorem}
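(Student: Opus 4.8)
The plan is to prove Pr\'ekopa's theorem using the Brascamp-Lieb method, which reduces the convexity statement to a weighted $L^2$-estimate. First I would fix $t$ and, for each $t$, study the function $x\mapsto \phi(t,x)$ on $A$. The quantity $u(t):=-\log\int_A e^{-\phi(t,x)}\,d\lambda(x)$ can be differentiated twice in $t$ under the integral sign, since $\phi$ is smooth and $A$ is compact. Introducing the probability measure $d\mu_t:=e^{-\phi(t,x)}\,d\lambda(x)\big/\int_A e^{-\phi(t,x)}\,d\lambda(x)$ on $A$, a direct computation gives
\begin{equation*}
u''(t)=\int_A \phi_{tt}\,d\mu_t-\operatorname{Var}_{\mu_t}(\phi_t)=\int_A \phi_{tt}\,d\mu_t-\int_A\left(\phi_t-E_{\mu_t}(\phi_t)\right)^2 d\mu_t,
\end{equation*}
where $\phi_t,\phi_{tt}$ denote $t$-derivatives and $E_{\mu_t}$ is the $\mu_t$-expectation. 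So convexity $u''(t)\geq 0$ is exactly the assertion that the variance of $\phi_t$ is controlled by the mean of $\phi_{tt}$.

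The next step is the key reduction via the Brascamp-Lieb lemma. The variance term $\operatorname{Var}_{\mu_t}(\phi_t)$ equals $\int_A (\phi_t-E_{\mu_t}(\phi_t))^2\,d\mu_t$, and the centered function $w:=\phi_t-E_{\mu_t}(\phi_t)$ satisfies $\int_A w\,d\mu_t=0$. The Brascamp-Lieb lemma (Lemma~\ref{le:BLf-new}) provides, for the weight $e^{-\phi}$ with $\phi(t,\cdot)$ strictly convex, the spectral-gap-type inequality
\begin{equation*}
\int_A w^2\,d\mu_t\leq \int_A (\nabla w)\cdot(\operatorname{Hess}_x\phi)^{-1}(\nabla w)\,d\mu_t,
\end{equation*}
valid for all $w$ with vanishing $\mu_t$-mean, where $\operatorname{Hess}_x\phi=(\phi_{jk})$ is the spatial Hessian. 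Applying this to $w=\phi_t-E_{\mu_t}(\phi_t)$, whose spatial gradient is $\nabla_x\phi_t$, I would bound the variance by $\int_A (\nabla_x\phi_t)\cdot(\phi_{jk})^{-1}(\nabla_x\phi_t)\,d\mu_t$.

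Finally I would show that this bound is exactly $\int_A\phi_{tt}\,d\mu_t$, or rather that the difference is nonnegative, by invoking the pointwise strict convexity of the full $(n+1)$-dimensional function $\phi$. The positivity of the full Hessian of $\phi(t,x)$, written in block form with spatial block $(\phi_{jk})$, mixed block $\nabla_x\phi_t$, and corner $\phi_{tt}$, is equivalent by the Schur complement criterion to
\begin{equation*}
\phi_{tt}-(\nabla_x\phi_t)\cdot(\phi_{jk})^{-1}(\nabla_x\phi_t)>0
\end{equation*}
at every point. Integrating this pointwise inequality against $d\mu_t$ and combining with the Brascamp-Lieb bound on the variance yields $u''(t)>0$. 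The main obstacle is establishing the Brascamp-Lieb lemma itself, which is the analytic heart of the argument: it is an $L^2$-estimate of H\"ormander type for the minimal solution of $du=v$ with respect to the weight $e^{-\phi}$, and its proof requires the self-adjoint realization of the weighted Laplacian and a careful spectral argument rather than a routine calculation. Given the lemma, the remaining steps are the standard differentiation and the Schur complement manipulation, both of which are mechanical.
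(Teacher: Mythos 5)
Your proposal is correct and follows essentially the same route as the paper's three-step Brascamp--Lieb argument: the second-derivative (variance) formula obtained by differentiating under the integral, the Brascamp--Lieb Poincar\'e-type inequality with weight matrix $(\phi_{jk})^{-1}$, and the pointwise Schur-complement inequality, which is exactly the paper's determinant identity $D_{t,x}/D_x=\phi_{tt}-\sum_{j,k}\phi_{tj}\phi^{jk}\phi_{tk}$. One small correction: in the paper, Lemma~\ref{le:BLf-new} is the differentiation formula of Step~1, not the spectral-gap inequality of Step~2; that inequality the paper, like you, does not prove in place but attributes to an H\"ormander-type $L^2$-estimate for the minimal solution of $d(\cdot)=d(\phi_t)$ or to the direct proof in \cite{BL76}.
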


The Brascamp-Lieb proof in \cite{BL76} contains three steps.

\medskip

\emph{Step 1}:  The second order derivative of function \eqref{eq:BM2} can be written as
\begin{equation}\label{eq:BL}
\int_{A} \phi_{tt}- (\phi_t -E_\nu (\phi_t))^2 \,d\nu,
\end{equation}
where
\begin{equation}
d\nu:=\frac{e^{-\phi(t, x)}\, d\lambda(x)}{ \int_{A} e^{-\phi(t, x)}\, d\lambda(x) }, \ E_\nu(\phi_t):=\int_{A} \phi_t \, d\nu. 
\end{equation}

\medskip

\emph{Step 2}:  Prove the following Brascamp-Lieb inequality:
$$
\int_{\mathbb R^n} (\phi_t -E_\nu (\phi_t))^2\, d\nu \leq \int_{\mathbb R^n} \sum_{j,k=1}^n \phi_{tj}\phi^{jk}\phi_{tk} \, d\nu, 
$$
where $(\phi^{jk})$ denotes the inverse matrix of $(\phi_{jk})$.

\medskip

\emph{Step 3}: Use strict convexity of $\phi$ to prove $\phi_{tt} > \sum_{j,k=1}^n \phi_{tj}\phi^{jk}\phi_{tk}$.

\medskip

\textbf{Remark}: The first step follows from the following lemma (take  $dV=e^{-\phi} \,d\lambda$). Since $$\phi_t -E_\nu (\phi_t)$$ is the (weighted) $L^2$-minimal solution of $d(\cdot)=d(\phi_t)$, an H\"ormander type $L^2$-estimate gives step 2, see also \cite{BL76} for a direct proof. For step 3, let $D_{t,x}$ be the determinant of the full hessian matrix of $\phi$, let $D_{x}$ be the determinant of the hessian matrix of $\phi$ as a function of $x$, then 
$$
\frac{D_{t,x}}{D_x}= \phi_{tt} - \sum_{j,k=1}^n \phi_{tj}\phi^{jk}\phi_{tk}.
$$
Strict convexity of $\phi$ implies $D_{t,x} > 0$ and $D_x >0$. Thus Step 3 follows. 

\begin{lemma}[Brascamp-Lieb lemma]\label{le:BLf-new} Let $A$ be a relatively compact open set in a smooth manifold $X$. Let $\{dV(t)\}_{t\in\mathbb R}$ be a smooth family of smooth volume forms on $X$. Let us define $G$ such that
$$
\frac{d}{dt} dV(t) =-G(t,x) \,dV(t), \qquad  (t,x)\in \mathbb R\times X.
$$
Then
$$
\frac{d^2}{dt^2} \left( -\log \int_A dV(t)  \right) =\int_A \left( G_t- (G-E_\mu(G))^2 \right) \, d\mu,
$$
where
$$
d\mu:=\frac{dV}{\int_A dV}, \qquad  E_\mu(G):=\int_A G \,d\mu.
$$
\end{lemma}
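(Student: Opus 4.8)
The plan is to prove this by direct differentiation under the integral sign; the statement is really the elementary observation that the second logarithmic derivative of a mass integral is a \emph{variance}. Differentiation under the integral is legitimate here because $A$ is relatively compact, so $\overline A$ is compact, and the family $dV(t)$ together with $G$ and $G_t$ is smooth, hence uniformly bounded on $\overline A$ over a neighbourhood of any fixed $t$. First I would set $I(t):=\int_A dV(t)$ and record the defining relation $\frac{d}{dt}dV=-G\,dV$. One differentiation gives $I'(t)=\int_A \frac{d}{dt}dV=-\int_A G\,dV$, and therefore
$$
\frac{d}{dt}\bigl(-\log I\bigr)=-\frac{I'}{I}=\frac{\int_A G\,dV}{\int_A dV}=E_\mu(G).
$$

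The main computation is the second differentiation. Writing $E_\mu(G)=N/I$ with $N(t):=\int_A G\,dV$, I would differentiate $N$ by hitting both the integrand $G$ and the form $dV$:
$$
N'=\int_A G_t\,dV+\int_A G\,\frac{d}{dt}dV=\int_A G_t\,dV-\int_A G^2\,dV.
$$
Combining this with $I'=-N$ and applying the quotient rule yields
$$
\frac{d^2}{dt^2}\bigl(-\log I\bigr)=\frac{N'}{I}-\frac{N}{I}\cdot\frac{I'}{I}=\frac{N'}{I}+\Bigl(\frac{N}{I}\Bigr)^2=E_\mu(G_t)-E_\mu(G^2)+E_\mu(G)^2.
$$

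Finally I would identify the last two terms as minus the variance of $G$ with respect to $\mu$. Since $E_\mu(G)$ depends only on $t$ and $E_\mu(1)=1$, expanding the square gives
$$
E_\mu\bigl((G-E_\mu(G))^2\bigr)=E_\mu(G^2)-2E_\mu(G)^2+E_\mu(G)^2=E_\mu(G^2)-E_\mu(G)^2.
$$
Substituting this into the previous display produces exactly
$$
\frac{d^2}{dt^2}\bigl(-\log I\bigr)=\int_A\bigl(G_t-(G-E_\mu(G))^2\bigr)\,d\mu,
$$
which is the asserted formula. There is no genuine obstacle beyond bookkeeping: the only point demanding a word of justification is the interchange of $\frac{d}{dt}$ with $\int_A$, and this is immediate from compactness of $\overline A$ and smoothness of the data, while the remaining manipulations are purely algebraic.
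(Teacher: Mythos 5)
Your proposal is correct and follows essentially the same route as the paper: both differentiate $-\log\int_A dV$ twice under the integral sign (justified by relative compactness of $A$ and smoothness of the data) and then identify the quadratic term as the variance of $G$ with respect to $d\mu$. The only cosmetic difference is that you apply the quotient rule to $N/I$ while the paper computes $\frac{d}{dt}\,d\mu=(-G+E_\mu(G))\,d\mu$ directly, which is the same calculation in different notation.
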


\begin{proof}  Since $A$ is relatively compact, we have 
$$
\frac{d}{dt} \left( -\log \int_A dV(t)  \right) =\int_A G \,d\mu.
$$
Apply the differential again, we get
$$
\frac{d^2}{dt^2} \left( -\log \int_A dV(t)  \right) =\int_A G_t  \,d\mu+ G \frac{d}{dt} d\mu.
$$
A direct computation gives
$$
\frac{d}{dt} d\mu=-G \, d\mu+ E_\mu(G) \, d\mu,
$$
which implies $\int_A G \frac{d}{dt} d\mu= -\int_A (G-E_\mu(G))^2  \ d\mu$. Thus the lemma follows.
\end{proof}

\textbf{Remark}: In \cite{Bern09}, Berndtsson proved that the Brascamp-Lieb lemma is essentially a subbundle curvature formula associated to a certain direct image bundle. Our main theorem can also be proved along this line, see \cite{Wang17, Wang-k}. Other interesting formulas for the second order derivative of $-\log\int dV$ can be found in \cite{BBN}.

\subsection{Proof of Lemma \ref{le:1-new}}  Notice that the Brascamp-Lieb lemma gives Lemma \ref{le:1-new} if $X$ is compact. In case $X$ is non-compact we can not directly apply the Brascamp-Lieb lemma. In our case the main point is that 
$$
e^{-f}=\int_X  \frac{\omega^m}{m!} \wedge T,
$$
is a polynomial of degree $m$. The reason is that we can write
$$
\frac{\omega^m}{m!} \wedge T =\sum_{j=1}^m t^j \Omega_j.
$$ 
Then \eqref{eq:equ-norm} implies that each $\int_X \Omega_j$ is finite and
$$
e^{-f}=\sum_{j=1}^m \left(\int_X \Omega_j \right) t^j.
$$
Thus in our case, $\int_X$ commutes with $\frac{d}{dt}$  and the Brascamp-Lieb lemma applies.

\section{Timorin's $T$-Hodge theory}

We shall use Timorin's $T$-Hodge theory to prove Lemma \ref{le:2-new}. The motivation comes from the Brunn-Minkowski case, i.e. $T=1$ and $X=\mathbb R^n\times {\mathbb T}^n$ (recall $\mathbb T:=\mathbb R/\mathbb Z$).

\subsection{Brunn-Minkowski inequality}

By Lemma \ref{le:one-h}, we know that the Brunn-Minkowski inequality is equivalent to the convexity of 
$$
f : t \mapsto -\log |A_t|, \ A_t:=tA_1 +(1-t) A_2,
$$
on $(0,1)$. Let $\phi_1$ and $\phi_2$ be smooth strictly convex functions  that tend to infinity at the boundary of $A_1$ and $A_2$ respectively. Put
$$
\psi_1:=\phi_1^*, \ \psi_2:=\phi_2^*.
$$
Proposition \ref{pr:partial-convex-convex} gives
$$
\nabla \psi_1 (\mathbb R^n)=A^\circ_1, \ \nabla \psi_2 (\mathbb R^n)=A_2^\circ. 
$$
Thus by Proposition \ref{pr:phi+phi} we have
$$
|A_t|=\int_{\mathbb R^n} \det(\phi_{jk}) \,dx, \ \phi:=t\psi_1+(1-t)\psi_2.
$$
Apply the Brascamp-Lieb lemma to 
$$
dV=\det(\phi_{jk})\,dx,
$$
we get
\begin{equation}\label{eq:ftt}
f_{tt}=\int_{\mathbb R^n} G_t-(G- E_\mu (G))^2\,  d\mu,
\end{equation}
where
$$
d\mu:=\frac{\det(\phi_{jk}) \,d\lambda(x)}{ \int_{\mathbb R^n} \det(\phi_{jk})\, d\lambda(x) }, \ E_\mu(G):=\int_{\mathbb R^n} G \, d\mu. 
$$

\begin{lemma}\label{le:3.2} $G=-\sum_{j,k=1}^n \phi_{tjk} \phi^{jk}$.
\end{lemma}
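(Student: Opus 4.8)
The plan is to read $G$ directly off its defining relation $\frac{d}{dt} dV = -G\, dV$ with $dV = \det(\phi_{jk})\, dx$. Since the Lebesgue factor $dx$ does not depend on $t$, the whole identity reduces to differentiating the determinant of the Hessian matrix $M(t) := (\phi_{jk}(t,x))$ in $t$, so that it suffices to prove $\frac{d}{dt}\det M = \left(\sum_{j,k}\phi^{jk}\phi_{tjk}\right)\det M$ and then match the two expressions.

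First I would note that strict convexity makes $M(t)$ positive definite, hence invertible with inverse $(\phi^{jk})$, so that its logarithmic derivative is well defined. The key step is Jacobi's formula $\frac{d}{dt}\det M = \det M \cdot \mathrm{tr}\!\left(M^{-1}\, \frac{dM}{dt}\right)$, which one obtains either by differentiating $\log\det M$ or by a cofactor expansion. Here $\frac{dM}{dt} = (\partial_t \phi_{jk}) = (\phi_{tjk})$ entrywise --- this is where I use that differentiation in $t$ commutes with the spatial derivatives --- and the symmetry of the Hessian lets me write the trace as $\sum_{j,k}\phi^{jk}\phi_{tjk}$ without having to track the order of the two indices.

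Combining these, $\frac{d}{dt}\det(\phi_{jk}) = \left(\sum_{j,k}\phi^{jk}\phi_{tjk}\right)\det(\phi_{jk})$. Comparing with $\frac{d}{dt}\det(\phi_{jk}) = -G\,\det(\phi_{jk})$ and cancelling the common positive factor $\det(\phi_{jk})$ yields $G = -\sum_{j,k}\phi_{tjk}\phi^{jk}$, which is exactly the claim.

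There is no real obstacle here: the statement is a one-line consequence of Jacobi's formula applied to the Hessian of $\phi$. The only point requiring a moment's care is the index bookkeeping inside the trace, which the symmetry of $(\phi_{jk})$ trivializes. It may also be worth remarking in passing that since $\phi = t\psi_1+(1-t)\psi_2$ is affine in $t$, the mixed third derivative $\phi_{tjk} = (\psi_1)_{jk}-(\psi_2)_{jk}$ is actually independent of $t$, although this observation is not needed for the identity itself.
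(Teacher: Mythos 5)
Your proof is correct and is essentially the paper's own argument: the paper likewise reads $G$ off the relation $\frac{d}{dt}\,dV=-G\,dV$ with $dV=\det(\phi_{jk})\,dx$ and invokes $(\log\det M)_t=\mathrm{Trace}(M^{-1}M_t)$ for $M=(\phi_{jk})$, which is just your Jacobi's formula in logarithmic form. Your extra remarks (symmetry handling the index order, affinity of $\phi$ in $t$) are harmless elaborations of the same one-line computation.
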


\begin{proof} We use the fact that  if $M(t)$ is a smooth family of positive definite matrices then
$$
(\log \det M)_t={\rm Trace} (M^{-1}M_t).
$$
Consider $M=(\phi_{jk})$ then $G=-{\rm Trace} (M^{-1}M_t)$ and the lemma follows.
\end{proof}

\begin{lemma}\label{le:3.3} $G_t=\sum_{j,k,l,m=1}^n \phi_{tjk} \phi_{tlm}\phi^{jl}\phi^{km} $.
\end{lemma}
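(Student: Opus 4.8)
The plan is to obtain $G_t$ by simply differentiating in $t$ the expression $G=-\sum_{j,k=1}^n\phi_{tjk}\phi^{jk}$ furnished by Lemma \ref{le:3.2}, where $(\phi^{jk})$ is the inverse of the (strictly positive) Hessian $(\phi_{jk})$. Applying the product rule produces two terms, one coming from $\frac{d}{dt}\phi_{tjk}=\phi_{ttjk}$ and one from $\frac{d}{dt}\phi^{jk}$.

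First I would observe that $\phi=t\psi_1+(1-t)\psi_2$ is affine in $t$, so that $\phi_{tt}\equiv 0$ and consequently $\phi_{ttjk}\equiv 0$. This annihilates the first of the two terms, leaving only $G_t=-\sum_{j,k}\phi_{tjk}\,\frac{d}{dt}\phi^{jk}$. Next I would compute the derivative of the inverse Hessian by differentiating the identity $\sum_k\phi_{jk}\phi^{kl}=\delta_j^l$ in $t$, which gives the standard formula $\frac{d}{dt}\phi^{jk}=-\sum_{p,q}\phi^{jp}\phi_{tpq}\phi^{qk}$ (equivalently $\frac{d}{dt}M^{-1}=-M^{-1}M_tM^{-1}$ for $M=(\phi_{jk})$). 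Substituting this into the surviving term, relabeling the summation indices, and using the symmetry $\phi^{km}=\phi^{mk}$ of the inverse Hessian then yields exactly $G_t=\sum_{j,k,l,m=1}^n\phi_{tjk}\phi_{tlm}\phi^{jl}\phi^{km}$, as claimed.

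There is no genuine obstacle here, as the computation is entirely routine. The only point deserving a moment's care is the vanishing of the $\phi_{ttjk}$ contribution: it rests crucially on $\phi$ being affine in the deformation parameter $t$. Were $\phi$ nonlinear in $t$, an additional trace term $-\sum_{j,k}\phi_{ttjk}\phi^{jk}$ would persist, and the clean quadratic expression for $G_t$ would fail.
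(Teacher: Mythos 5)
Your proposal is correct and follows essentially the same route as the paper: differentiate $G=-\sum\phi_{tjk}\phi^{jk}$ from Lemma \ref{le:3.2} and apply $(M^{-1})_t=-M^{-1}M_tM^{-1}$ to the inverse Hessian. The only difference is that you make explicit the vanishing of the $\phi_{ttjk}$ term via the affine dependence $\phi=t\psi_1+(1-t)\psi_2$, a point the paper's proof uses silently when it writes $G_t=-\sum_{j,k}\phi_{tjk}(\phi^{jk})_t$.
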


\begin{proof} If $M(t)$ is a smooth family of positive definite matrices then
$$
(M^{-1})_t=-M^{-1}M_t M^{-1}.
$$
Apply the above fact, we get 
$$
(\phi^{jk})_t= -\sum_{l,m=1}^n \phi_{tlm}\phi^{jl}\phi^{km}.
$$ 
Moreover, Lemma \ref{le:3.2} implies $G_t=-\sum_{j,k=1}^n  \phi_{tjk} (\phi^{jk})_t$, thus the lemma follows.
\end{proof}

By Lemma \ref{le:28-new}, we have
$$
f=-\log \int_{\mathbb R^n \times {\mathbb T}^n} \frac{(dd^c\phi)^n}{n!}.
$$
Consider $\omega=dd^c\phi$. The above two lemmas give
$$
G=-\Lambda\theta, \  G_t=|\theta|^2_{\omega},
$$
thus Lemma \ref{le:2-new} is true in case $T=1$ and $X=\mathbb R^n\times {\mathbb T}^n$.

\subsection{$T$-Hodge theory} In this subsection, we will introduce the $T$-Hodge theory behind the proof of Lemma \ref{le:2-new}. The $T$-Hodge theory is an integration of Timorin's work in \cite{Timorin98}, see the author's notes \cite{Wang17} for a systematic study of the $T$-Hodge theory.

\medskip

Denote by $V^{p,q}$ the space of smooth $(p,q)$-forms on an $n$-dimensional complex manifold $X$. Put
$$
V:=\oplus_{0\leq p,q \leq n} V^{p,q}, \ V^k:=\oplus_{ p+q = k} V^{p,q}.
$$

\begin{definition} Let 
$$
T=\alpha_{m+1} \wedge \cdots \wedge\alpha_{n},
$$
be a finite wedge product of smooth positive $(1,1)$-forms on $X$. We call the Hodge theory on $V_T:=\{T\wedge u: u\in V\}$ the $T$-Hodge theory.
\end{definition}

For bidegree reason,  we have
$$
V_T=\oplus_{0\leq p,q\leq m} V_T^{p,q},
$$ 
where $V_T^{p,q} $ denotes the space of forms  that can be written as $T\wedge u$, where $u$ is a smooth $(p,q)$-form on $X$. Fix a smooth positive $(1,1)$-form $\omega$ on $X$. The $L$ operator 
$$
L: T\wedge u\mapsto \omega\wedge T\wedge u, 
$$
is well defined and maps $V_T^{p,q}$ to $V_T^{p+1,q+1}$. 

\begin{theorem}[Timorin's mixed hard-Lefschetz theorem]\label{th: T-HL} Put $V_T^k=\oplus_{p+q=k} V_T^{p,q}$ then 
$$
L^{m-k}: T\wedge u\mapsto T\wedge u\wedge\omega^{m-k},\  0\leq k\leq m,
$$
defines an isomorphism from $V^k_T$ to $V_T^{2m-k}$.
\end{theorem}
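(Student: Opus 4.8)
The plan is to prove this as a \emph{pointwise} statement in linear algebra, since the operators $L$ and $T\wedge(\cdot)$ act fiberwise and the isomorphism claim is $C^\infty(X)$-linear. Fixing a point $x\in X$, I would work in the exterior algebra of the cotangent space, where $\omega$ and the $\alpha_j$ become positive definite $(1,1)$-forms. By bidegree, $L^{m-k}$ indeed carries $V_T^k$ into $V_T^{2m-k}$, so the content is that this map is bijective. The first reduction is a dimension count: complex conjugation gives $V_T^{p,q}\cong V_T^{q,p}$, and the wedge pairing $V_T^{p,q}\times V_T^{m-p,m-q}\to V_T^{m,m}$ into top degree identifies $V_T^{p,q}$ with the dual of $V_T^{m-p,m-q}$. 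Summing over $p+q=k$ yields $\dim V_T^k=\dim V_T^{2m-k}$, so it suffices to prove that $L^{m-k}$ is \emph{injective} on $V_T^k$.

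The natural anchor is the case in which all forms coincide, $\alpha_{m+1}=\cdots=\alpha_n=\omega$, so that $T=\omega^{n-m}$. Here the statement reduces to the classical hard Lefschetz theorem for a single positive form on $\Lambda^\bullet\mathbb C^n$: the operators $L=\omega\wedge(\cdot)$, its adjoint $\Lambda$, and the degree operator $H$ satisfy the $\mathfrak{sl}_2$ commutation relations, and the representation theory of $\mathfrak{sl}_2$ together with the Lefschetz (primitive) decomposition shows that $L^{m-k}$ restricts to an isomorphism from degree $k$ to degree $2m-k$. This case is standard and requires no positivity beyond nondegeneracy of $\omega$.

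The remaining, and genuinely hard, step is to pass from this diagonal anchor to the general tuple $(\omega,\alpha_{m+1},\dots,\alpha_n)$ of positive forms. The obstruction is that several Hermitian forms cannot be simultaneously diagonalized, so injectivity cannot be read off directly. The strategy I would follow is a deformation argument along the cone of positive tuples, which is convex, hence connected; injectivity of $L^{m-k}$ is an open condition, so it persists near the diagonal, and to propagate it globally one must prevent the rank of $L^{m-k}$ from dropping. This control is supplied by Timorin's mixed Hodge--Riemann bilinear relations: the Hermitian pairing $u\mapsto T\wedge\omega^{m-k}\wedge u\wedge\bar u$ is definite (up to a fixed sign) on the primitive part of $V_T^k$, and definiteness is preserved under continuous deformation as long as nondegeneracy is maintained. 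Thus hard Lefschetz (nondegeneracy of $L$) and the Hodge--Riemann relations (definiteness of the pairing) reinforce one another and must be established \emph{together} by induction on $m$, peeling off one factor $\alpha_n$ at each stage and using its strict positivity to ensure the signature of the pairing does not jump. The main obstacle is precisely this entanglement: the sign bookkeeping in the Lefschetz decomposition at the inductive step, where the absence of simultaneous diagonalization forces one to track the mixed signature indirectly, is the technical heart of the theorem and is exactly Timorin's contribution in \cite{Timorin98}.
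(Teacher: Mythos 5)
Your preparatory reductions are sound: the statement is indeed pointwise, the top-degree pairing $(T\wedge u,\,T\wedge v)\mapsto T\wedge u\wedge v$ on $V_T^{p,q}\times V_T^{m-p,m-q}$ is well defined and perfect (by perfectness of the untwisted wedge pairing into $\Lambda^{n,n}$), so $\dim V_T^k=\dim V_T^{2m-k}$ fiberwise and injectivity suffices. The genuine gap is in your deformation step. You propose to track nondegeneracy and signature along a path of positive tuples, but the objects you deform live on the spaces $V_T^k$ and their primitive parts, and these spaces \emph{move} with the tuple: $\dim V_{T,x}^k=\dim\Lambda_x^k-\dim\ker\bigl(u\mapsto T\wedge u\bigr)$, and injectivity of $u\mapsto T\wedge u$ on $k$-forms with $k\le m$ is itself a nontrivial consequence of the mixed hard Lefschetz theorem, not an available input. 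So ``the rank of $L^{m-k}$ does not drop'' is not a statement about a continuous family of maps between fixed spaces, and the dimension bookkeeping in your open/closed argument is circular. The same issue resurfaces when you pass from pointwise bijectivity back to the theorem as stated for smooth forms: to solve $T\wedge u\wedge\omega^{m-k}=T\wedge v$ with $u$ smooth you need the fibers $V_{T,x}^k$ to have locally constant rank, which is again the missing injectivity of $u\mapsto T\wedge u$. Finally, the joint induction you sketch (hard Lefschetz and Hodge--Riemann definiteness reinforcing each other, peeling off one factor) is precisely the content of \cite{Timorin98}, which you end up citing for the ``technical heart'' anyway; the deformation scaffolding then does no work that the citation does not already do.

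The paper's proof sidesteps all of this by working on the \emph{fixed} untwisted spaces. It quotes Timorin's result in the form (Theorem 4.2 of \cite{Wang17}) that $A:u\mapsto T\wedge u\wedge\omega^{m-k}$ is an isomorphism from $V^k$ onto $V^{2n-k}$, where both have fiber rank $\binom{2n}{k}$. This one statement yields everything formally: surjectivity of $A$ gives $V^{2n-k}=V_T^{2m-k}$; since $A$ factors through $f_T:u\mapsto T\wedge u$, injectivity of $A$ gives injectivity of $f_T$, so $f_T:V^k\to V_T^k$ is an isomorphism (in particular the fibers of $V_T^k$ form a bundle of constant rank, fixing the smoothness issue above); and then $L^{m-k}=A\circ f_T^{-1}$ is the desired isomorphism $V_T^k\to V_T^{2m-k}$. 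If you want a correct writeup, either reproduce Timorin's induction on the untwisted exterior algebra, or cite it for the statement about $A$ and add this three-line bridge; attempting the deformation directly on the $T$-twisted spaces, as you propose, fails on the dimension-jumping point.
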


\begin{proof} By Theorem 4.2 in \cite{Wang17}, we know that
$$
A: u\mapsto T\wedge u\wedge\omega^{m-k},
$$
defines an isomorphism from $V^k$ to $V^{2n-k}$. Hence $V^{2n-k}=V_T^{2m-k}$ and the following map
$$
f_T: u\mapsto T\wedge u, \ u\in V^k,
$$
is injective. Thus $f_T$ defines an isomorphism from $V^k$ to  $V^k_T$. Hence $L^{m-k}=A\circ f^{-1}_T$ is an isomorphism  from $V^k_T$ to $V_T^{2m-k}$.
\end{proof}

\begin{definition}\label{de:primitive} We call $T\wedge u\in V^k_T$ a primitive $k$-form if $k\leq m$ and $L^{m-k+1} (T\wedge u)=0$.
\end{definition}

Theorem \ref{th: T-HL} implies:

\begin{theorem}\label{th: T-lef} Every $T\wedge u\in V_T^k$ has an Lefschetz decomposition as follows:
\begin{equation}
T\wedge u=\sum_{r=0}^j L^r  (T\wedge u_r),  \ \qquad \text{for some} \ 0\leq j \leq m,
\end{equation}
where each $T\wedge u_r$ is zero or primitive in $V_T^{k-2r}$. If $T\wedge u=0$ then $T\wedge u_r=0$ for every $r$.
\end{theorem}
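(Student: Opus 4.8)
The plan is to deduce Theorem~\ref{th: T-lef} from Timorin's mixed hard-Lefschetz theorem (Theorem~\ref{th: T-HL}) by the same formal argument that produces the classical Lefschetz decomposition from the classical hard-Lefschetz theorem. Everything takes place fibrewise: at each point $x\in X$ the spaces $V_T^{p,q}$ are finite-dimensional and, by the injectivity of $u\mapsto T\wedge u$ noted in the proof of Theorem~\ref{th: T-HL}, their dimensions are independent of $x$; hence all the kernels and images below have constant rank and define smooth subbundles, so the resulting decomposition is smooth. I write $P^k:=\ker\bigl(L^{m-k+1}\colon V_T^k\to V_T^{2m-k+2}\bigr)$ for the space of primitive $k$-forms when $k\le m$, and set $P^k:=0$ for $k>m$.

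The core step is the elementary splitting
\[
V_T^k=P^k\oplus L\,V_T^{k-2}.
\]
For $k\le m$ this follows because $L^{m-k+1}$ restricted to $L\,V_T^{k-2}$ equals $L^{m-(k-2)}$, which by Theorem~\ref{th: T-HL} (in degree $k-2\le m$) is an isomorphism of $V_T^{k-2}$ onto $V_T^{2m-k+2}$; thus $L\,V_T^{k-2}$ maps isomorphically onto the target of $L^{m-k+1}$, forcing $L^{m-k+1}$ to be onto with kernel $P^k$ complementary to $L\,V_T^{k-2}$. For $k>m$ the splitting reduces to the surjectivity of $L\colon V_T^{k-2}\to V_T^k$, which I read off from Theorem~\ref{th: T-HL} by writing $L^{\,k-m}\colon V_T^{2m-k}\to V_T^k$ (an isomorphism, since $2m-k<m$) as $L$ composed with $L^{\,k-m-1}$. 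Iterating the splitting downward in $k$ then yields the existence of a decomposition $T\wedge u=\sum_{r=0}^{j}L^r(T\wedge u_r)$ with each $T\wedge u_r$ zero or primitive in $V_T^{k-2r}$.

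It remains to establish the directness assertion, i.e. that $T\wedge u=0$ forces every $T\wedge u_r=0$. When $k\le m$ this is clean: $L$ is injective on $V_T^{j}$ for all $j\le m-1$ (because $L^{m-j}=L^{m-j-1}\circ L$ is an isomorphism by Theorem~\ref{th: T-HL}), so the directness of the elementary splitting $P^k\oplus L\,V_T^{k-2}$ propagates through the induction and $\bigoplus_r L^r P^{k-2r}$ is a direct sum. Since in the application to Lemma~\ref{le:2-new} only degrees $k\le m$ occur, this already suffices for the sequel. The main obstacle is the directness in the range $k>m$, where $L$ is no longer injective; here I would argue using the adjoint $\Lambda$ of Lemma~\ref{le:2-new} and the Lefschetz-string commutation relations between $L$ and $\Lambda$ on primitive forms---equivalently, the $\mathfrak{sl}_2$-module structure that hard Lefschetz confers on $V_T$---to isolate the lowest non-vanishing component $T\wedge u_{r_0}$ by applying $\Lambda^{r_0}$ and invoking $P^{k-2r_0}\cap L\,V_T^{k-2r_0-2}=0$. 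This last point is the only place where more than formal manipulation of Theorem~\ref{th: T-HL} is required.
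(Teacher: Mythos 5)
Your existence proof and your directness argument in degrees $k\le m$ are correct and coincide in substance with the paper's own: the paper picks $\hat u$ with $L^{m-k+2}(T\wedge \hat u)=L^{m-k+1}(T\wedge u)$, sets $u_0=u-L\hat u$, and iterates, which is exactly your splitting $V_T^k=P^k\oplus L\,V_T^{k-2}$; and its directness step (apply $L^{m-k+j}$ to the relation $\sum_{r=0}^{j}L^r(T\wedge u_r)=0$ and invoke the isomorphism of Theorem \ref{th: T-HL}) is a repackaging of your injectivity of $L^r$ on $V_T^{k-2r}$ for $k\le m$. Your fibrewise constant-rank preamble is harmless but unnecessary: Theorem \ref{th: T-HL} is stated for the global spaces $V_T^k$ of smooth forms, so smoothness of the components is automatic from the construction.

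The genuine gap is the one you flag yourself: directness for $k>m$, and the patch you sketch would be circular in this paper's development. Here $\Lambda:=*_s^{-1}L*_s$ (Definition \ref{de:triple}), and the Lefschetz star $*_s$ is defined in Definition \ref{de:lef-star} by prescribing its values on the pieces $L_r(T\wedge u)$ with $T\wedge u$ primitive and extending linearly; that this prescription is well defined is precisely the content of Theorem \ref{th: T-lef} (existence \emph{and} uniqueness of the components). So neither $\Lambda$ nor the $\mathfrak{sl}_2$-commutation relations may be invoked before the theorem is proved --- and the ``adjoint $\Lambda$ of Lemma \ref{le:2-new}'' is this same operator. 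The repair is the paper's opening reduction, for which you already have every ingredient: for $k>m$, $L^{k-m}\colon V_T^{2m-k}\to V_T^k$ is an isomorphism by Theorem \ref{th: T-HL}. A nonzero primitive component $T\wedge u_r$ has degree $k-2r\le m$; if $r\le k-m-1$ then $r\ge m-k+2r+1$, so $L^r(T\wedge u_r)=L^{\,r-(m-k+2r+1)}L^{\,m-k+2r+1}(T\wedge u_r)=0$ and such a term drops out of the relation. Hence the relation effectively involves only $r\ge k-m$; factoring $L^r=L^{k-m}\circ L^{\,r-(k-m)}$ and using injectivity of $L^{k-m}$ converts it into a relation in degree $2m-k\le m$, which your own argument settles. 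One caveat worth recording: since a nonzero primitive $T\wedge u_1\in V_T^m$ satisfies $L(T\wedge u_1)=0$, in degree $k=m+2$ the literal assertion ``$T\wedge u=0$ forces all $T\wedge u_r=0$'' can only hold for the range $r\ge k-m$ that the existence construction actually produces; this restricted reading is what is implicit in the paper's ``one may assume $0\le k\le m$''.
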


\begin{proof} By the isomorphism in Theorem \ref{th: T-HL}, one may assume that $0\leq k\leq m$. Notice that all forms in $V_T^0$ and $V_T^1$ are primitive. Assume that $2\leq k\leq m$, Theorem \ref{th: T-HL} gives $\hat u\in V^{k-2}$ such that
$$
L^{m-k+2} (T\wedge \hat u)= L^{m-k+1}(T\wedge u).
$$
Put $u_0=u-L\hat u$, then  $T\wedge u_0$ is primitive and
$$
T\wedge u=T\wedge u_0+ L(T\wedge \hat u).
$$
Consider $\hat u$ instead $u$,  the Lefschetz decomposition of $T\wedge u$ follows by repeating the above argument. If $T\wedge u=\sum_{r=0}^j  L^r  (T\wedge u_r) =0$ then primitivity of $T\wedge u_r$ for $0\leq r <j$ implies 
$$
0=L^{m-k+j}(\sum_{r=0}^j  L^r  (T\wedge u_r))=L^{m-k+2j} (T\wedge u_j),
$$
which gives $T\wedge u_j=0$ by Theorem \ref{th: T-HL}. By induction on $j$, we get $T\wedge u_r=0$ for every $r$.
\end{proof}

\begin{definition}\label{de:lef-star} If $T\wedge u\in V_T^k$ is primitive then we define
$$
*_s(L_r (T\wedge u)):=(-1)^{[k]} L_{m-r-k} (T\wedge u),
$$
where
$$
 L_p:=\frac{L^p}{p!}, \qquad [k]:=1+\cdots+k=\frac{k(k+1)}{2}.
$$
$*_s$ extends to a $\mathbb{C}$-linear map $*_s: V_T\to V_T$, we call it the Lefschetz star operator on $V_T$. 
\end{definition}

The Lefschetz star operator above is a generalization of the symplectic star operator, see \cite{Wang17} for the background.

\begin{definition}\label{de:triple} Put $\Lambda=*_s^{-1} L*_s$, $B:=[L, \Lambda]$. We call $(L,\Lambda, B)$ the $sl_2$-triple on $V_T$.
\end{definition}

\begin{definition}\label{de:hodge-star} We call $*:=*_s\circ J$ the Hodge star operator on $V_T$, where $J$ is the Weil-operator defined by
$Ju=i^{p-q}u$ if $u\in V^{p,q}_T$.
\end{definition}

Timorin's mixed Hodge-Riemann bilinear relation \cite{Timorin98} gives:

\begin{theorem}\label{th: T-metric} For every non-zero $u\in V^k$, $0\leq k\leq m$, 
$$
\int_{X} u\wedge \overline{ *(T\wedge u)}  > 0,
$$
where $*$ denotes the Hodge star operator on $V_T$.
\end{theorem}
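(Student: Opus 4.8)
The plan is to reduce the statement to a pointwise inequality of linear algebra and then to prove the latter by a continuity-of-signature argument. Because the Hodge star $*=*_s\circ J$ is built algebraically, at each point, out of $\omega$ and $\alpha_{m+1},\dots,\alpha_n$ through the Lefschetz decomposition of Theorem \ref{th: T-lef} and the Weil operator, the integrand $u\wedge\overline{*(T\wedge u)}$ is a pointwise-defined $(n,n)$-form whose density depends only on the values of $u$ and of the positive forms at that point. Thus it suffices to show that, at each $x\in X$, the sesquilinear form $H$ on the $k$-covectors at $x$ defined by
$$
u\wedge\overline{*(T\wedge u')}=H(u,u')\,\frac{\omega^n}{n!}
$$
is positive definite; integrating the nonnegative density $H(u,u)\,\omega^n/n!$ over $X$ then gives strict positivity whenever $u\not\equiv 0$.

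First I would verify that $H$ is conjugate-symmetric, which follows from the compatibility of the symplectic star $*_s$ with the Weil operator $J$; this makes the spectrum of $H$ real, so that its signature is well defined. Next, $H$ depends continuously on the tuple $(\omega,\alpha_{m+1},\dots,\alpha_n)$ of positive $(1,1)$-forms at $x$, and the set of such tuples is convex, hence connected. The essential input is non-degeneracy: since $*_s$ is invertible by the mixed hard Lefschetz theorem (Theorem \ref{th: T-HL}) and the wedge pairing into top degree is perfect, $H$ is non-degenerate for every admissible tuple. A non-degenerate Hermitian form varying continuously over a connected parameter space has constant signature, so it is enough to evaluate the signature at one convenient base point.

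For the base point I would take $\omega=\alpha_{m+1}=\cdots=\alpha_n=\omega_0$, the standard Euclidean K\"ahler form $\tfrac{i}{2}\sum_j dz^j\wedge d\bar z^j$. Then $T=\omega_0^{\,n-m}$, the mixed Lefschetz structure collapses to the classical one, $*$ becomes a positive multiple of the usual Hodge star, and $H$ reduces to the classical Hodge inner product on $k$-forms, which is positive definite. By the signature-constancy principle, $H$ is positive definite for every admissible tuple, and the theorem follows.

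The step I expect to be the main obstacle is the base-case bookkeeping: one must check that, with the sign conventions $(-1)^{[k]}$ of Definition \ref{de:lef-star} and the factors $i^{p-q}$ in $J$, the form $H$ specializes at $\omega_0$ to the \emph{positive}-definite classical Hodge form rather than to one of indefinite signature. The conceptual ingredients---continuity, connectedness, and non-degeneracy from hard Lefschetz---are robust and already available; it is the alignment of all normalizations so that the base-point signature is maximally positive that requires care, and this is precisely the content Timorin established in the linear-algebra setting and that the $T$-Hodge theory repackages.
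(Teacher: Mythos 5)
Your argument is correct in outline, but it takes a genuinely different route from the paper's. The paper's proof is short and direct: it takes the Lefschetz decomposition $T\wedge u=\sum_{r}L_r(T\wedge u_r)$ from Theorem \ref{th: T-lef}, uses $\deg u\leq m$ to write $u=\sum_r L_r u_r$, kills the cross terms by primitivity so that $u\wedge\overline{*(T\wedge u)}=\sum_r(-1)^{[k-2r]}L_rL_{m+r-k}(T\wedge u_r)\wedge\overline{J(u_r)}$, and then cites the pointwise positivity of each summand (Theorem 4.1 in \cite{Wang17}, i.e.\ Timorin's mixed Hodge--Riemann relation for primitive forms). You instead re-derive that pointwise positivity from scratch: non-degeneracy of the Hermitian form $H$ via the mixed hard Lefschetz theorem (Theorem \ref{th: T-HL}) and the perfectness of the wedge pairing, connectedness of the space of tuples of positive $(1,1)$-forms, constancy of the signature of a continuously varying non-degenerate Hermitian family, and evaluation at the diagonal base point. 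This is essentially Timorin's original deformation strategy in the linear context. What the paper's route buys is economy, since the heavy linear algebra is outsourced to \cite{Wang17}; what yours buys is self-containedness given only the mixed hard Lefschetz theorem, which the paper already quotes independently, so there is no circularity.

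Two points in your sketch need repair, though both are fixable. First, establishing that $H$ is conjugate-symmetric (so that its signature is defined at all) is not a formal consequence of ``compatibility of $*_s$ with $J$'': it requires showing that the cross terms between different Lefschetz levels, and between different bidegrees within a level, vanish when paired against $T$ and powers of $\omega$ --- which is exactly the computation constituting the body of the paper's proof. So your route does not avoid that bookkeeping; it performs it and then adds the deformation argument on top. Second, your base-point claim is too strong: when $\omega=\alpha_{m+1}=\cdots=\alpha_n=\omega_0$, the $T$-Hodge star is \emph{not} a single positive multiple of the classical Hodge star. As the paper's Lemma \ref{le:c1} shows, the two stars differ on the $r$-th Lefschetz component by the factor $(n-m+r)!/r!$, which depends on $r$. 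Positive definiteness at the base point still follows, because distinct Lefschetz components are orthogonal for the classical Hodge inner product and all these factors are positive, but that orthogonality must be invoked explicitly rather than absorbed into a global rescaling.
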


\begin{proof} Let $ T\wedge u=\sum_{r=0}^j L_r  (T\wedge u_r)$
be the Lefschetz decomposition of $T\wedge u$. By our assumption, the degree of $u$ is no bigger than $m$, thus Theorem 4.2 in \cite{Wang17} implies
$$
u=\sum_{r=0}^j L_r u_r.
$$ 
Now primitivity of $T\wedge u_r$ gives
$$
u\wedge \overline{ *(T\wedge u)} =  \sum_{r=0}^j (-1)^{[k-2r]} L_r L_{m+r-k}(T\wedge u_r) \wedge \overline{J(u_r)}. 
$$
By Theorem 4.1 in \cite{Wang17}, if  $u_r$ is not zero then 
$$
(-1)^{[k-2r]} L_r L_{m+r-k}(T\wedge u_r) \wedge \overline{J(u_r)} >0,
$$
as a positive $(n,n)$-form. Thus the theorem follows.
\end{proof}

Let us define
$$
||T\wedge u||^2:=||u||^2_{T, \omega}:=\int_{X} u\wedge \overline{ *(T\wedge u)}, \ u\in V^k, \ 0\leq k\leq m.
$$

\begin{definition}\label{de:T-norm} We call $||T\wedge u||=||u||_{T, \omega}$ the $T$-Hodge theory norm on $V_T^k$.
\end{definition}

\subsection{Proof of Lemma \ref{le:2-new}}  \eqref{eq:G-new} follows directly from the definition of the $T$-Hodge theory norm. For \eqref{eq:Gt-new}, notice that
$$
\frac{d}{dt}\left (\frac{\omega^m}{m!} \wedge T\right)=\theta\wedge \frac{\omega^{m-1}}{(m-1)!} \wedge T,
$$
gives 
\begin{equation}\label{eq:theta1-p}
(\theta+G \frac{\omega}{m}) \wedge \frac{\omega^{m-1}}{(m-1)!} \wedge T =0.
\end{equation}

\begin{definition}\label{de:theta1} $\theta_0:=\theta+G \frac{\omega}{m}, \ \theta_1:= -\frac Gm, \ \theta':=-\theta_0 \wedge \frac{\omega^{m-2}}{(m-2)!}+\theta_1 \wedge   \frac{\omega^{m-1}}{(m-1)!}$.
\end{definition}

We have $\theta=\theta_0+\theta_1\omega$. \eqref{eq:theta1-p} implies that $T\wedge \theta_0$ is primitive. Thus we have
\begin{equation}
T\wedge \theta'= *(T\wedge \theta)=\overline{*(T\wedge \theta)}.
\end{equation}
Apply the derivative of \eqref{eq:theta1-p} with respect to $t$, we get
$$
(G_t \frac{\omega}{m} + G \frac{\theta}{m} ) \wedge \frac{\omega^{m-1}}{(m-1)!} \wedge T + \theta_0\wedge \theta \wedge \frac{\omega^{m-2}}{(m-2)!} \wedge T=0,
$$
thus
\begin{eqnarray*}
G_t \, \frac{\omega^{m}}{m!} \wedge T & = & \theta_1\theta\wedge \frac{\omega^{m-1}}{(m-1)!} \wedge T- \theta_0\wedge \theta \wedge \frac{\omega^{m-2}}{(m-2)!} \wedge T \\ 
 & = & \theta\wedge\theta'\wedge T=  \theta\wedge \overline{*(T\wedge \theta)},
\end{eqnarray*}
which gives \eqref{eq:Gt-new}. Now it suffices to prove \eqref{eq:Gtheta-new}. Notice that Definition \ref{de:triple} gives
$$
\Lambda(T\wedge\theta)= *_s^{-1} (\omega \wedge T\wedge\theta')=T\wedge m\theta_1 =-T\wedge G.
$$
Thus \eqref{eq:Gtheta-new} is true.

\section{H\"ormander $L^2$-estimate in $T$-Hodge theory}

\textbf{Notation}: In this paper, $d^*$ and $(d^c)^*$ denote the adjoint of $d$ and $d^c$ with respect to the $T$-Hodge theory norm. 

\begin{theorem}\label{th:Hormander-new} Let $(X, \hat\omega)$ be an $n$-dimensional complete K\"ahler manifold. Let 
$$
T:=\alpha_{m+1}\wedge \cdots \wedge \alpha_n, \ 2\leq m\leq n,
$$
be a finite wedge product of K\"ahler forms on $X$ such that \eqref{eq:equ-norm} is true. Let $\theta$ be a smooth $d$-closed $2$-form on $X$. Assume that the $T$-Hodge theory norm $||T\wedge\theta||$ is finite. Then there exists a smooth solution of $$d(T\wedge u)= (d^c)^*(T\wedge\theta)$$ such that $||T\wedge u|| \leq ||T\wedge \theta||$.  
\end{theorem}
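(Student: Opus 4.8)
The plan is to realize $d$ and $d^c$ as densely defined closed operators on the $L^2$-completion of $V_T$ for the $T$-Hodge theory norm and to combine H\"ormander's duality lemma with a Bochner-Kodaira-Nakano identity adapted to the $T$-Hodge theory. Write $w:=(d^c)^*(T\wedge\theta)$ and look for $v\in L^2(V_T^0)$ solving $dv=w$ in the weak sense with $\|v\|\le\|T\wedge\theta\|$; by degree the unknown lies in $V_T^0$, so $v=T\wedge u$. Since $\theta$ and $T$ are $d$-closed and every $\alpha_j$ (hence $\omega$) is of type $(1,1)$ and thus simultaneously $d$- and $d^c$-closed, the Timorin-type K\"ahler identity $[\Lambda,d]=-(d^c)^*$ gives $w=d\big(\Lambda(T\wedge\theta)\big)$ with $\Lambda(T\wedge\theta)\in L^2$; this already produces one $L^2$-solution and shows the compatibility $\langle w,\gamma\rangle=0$ for $\gamma\in\ker d^*$. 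The content of the theorem is the sharp constant $1$, and for this I would pass to the minimal solution.

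By H\"ormander's lemma, a weak solution with $\|v\|\le\|T\wedge\theta\|$ exists provided
$$
|\langle T\wedge\theta,\,d^c\gamma\rangle|\le\|T\wedge\theta\|\cdot\|d^*\gamma\|
$$
for all admissible test forms $\gamma$, where $\langle w,\gamma\rangle$ is read as $\langle T\wedge\theta,d^c\gamma\rangle$. Decomposing $\gamma$ along $\overline{\mathrm{Ran}\,d}\oplus\ker d^*$ and using the compatibility above, I may assume $d\gamma=0$; Cauchy-Schwarz then reduces the whole estimate to the a priori inequality
$$
\|d^c\gamma\|\le\|d^*\gamma\|,\qquad d\gamma=0 .
$$

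Both the reduction and the eventual integrations by parts require the completeness of $\hat\omega$. Using the exhaustion $\rho$ with $|d\rho|_{\hat\omega}\le1$, I would form cut-offs $\chi_\nu=\chi(\rho/\nu)$ with $\|d\chi_\nu\|_{\hat\omega}\to0$ and invoke the two-sided bound \eqref{eq:equ-norm}, which makes the $T$-Hodge norm uniformly equivalent to the $\hat\omega$-norm; this is the Andreotti-Vesentini--Gaffney argument and shows that compactly supported smooth forms are dense in the graph norms of $d,d^*,d^c,(d^c)^*$. It therefore suffices to prove the a priori inequality for compactly supported $\gamma$. Here lies the heart of the proof: for such $\gamma$ I would establish the $T$-Hodge Bochner-Kodaira-Nakano identity
$$
\|d\gamma\|^2+\|d^*\gamma\|^2=\|d^c\gamma\|^2+\|(d^c)^*\gamma\|^2+\langle \mathcal R\gamma,\gamma\rangle,
$$
obtained by commuting $d$ and $d^c$ past $L$ and $\Lambda$. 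The closedness of the $\alpha_j$ kills every first-order torsion term, so $\mathcal R$ reduces to a curvature expression in the semi-positive forms $\alpha_j,\omega$, which Timorin's mixed Hodge-Riemann positivity (Theorem \ref{th: T-metric}) forces to be non-negative. Restricting to $d\gamma=0$ gives $\|d^*\gamma\|^2\ge\|d^c\gamma\|^2$, exactly the required estimate with constant $1$.

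Feeding this back through H\"ormander's lemma yields a weak $L^2$-solution $v=T\wedge u$ with $\|T\wedge u\|\le\|T\wedge\theta\|$; transporting the equation to $u$ via the injection $u\mapsto T\wedge u$ turns it into a uniformly elliptic system with smooth coefficients (uniform ellipticity being guaranteed by \eqref{eq:equ-norm}), so interior elliptic regularity upgrades $u$ to a smooth solution. The step I expect to be the genuine obstacle is the Bochner-Kodaira-Nakano identity: making the mixed operators $L,\Lambda,*$ explicit enough to compute the commutators, verifying that closedness of the $\alpha_j$ really annihilates all torsion, and --- most delicately --- checking that the residual operator $\mathcal R$ is non-negative as a whole rather than merely on each primitive component. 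Controlling the non-compactness, i.e. ensuring that the cut-off errors vanish in the limit and that integration by parts is licit for the merely $L^2$ forms involved, is the secondary technical difficulty.
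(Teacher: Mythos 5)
Your proposal is correct in outline but packages the functional analysis differently from the paper. You run the classical H\"ormander--Andreotti--Vesentini duality scheme: use the $T$-K\"ahler identity $(d^c)^*=[d,\Lambda]$ to see that the datum $w=(d^c)^*(T\wedge\theta)=d\bigl(\Lambda(T\wedge\theta)\bigr)$ is compatible, split test forms along $\overline{\mathrm{Ran}\,d}\oplus\ker d^*$ to reduce to $d\gamma=0$, prove the a priori bound $\|d^c\gamma\|\le\|d^*\gamma\|$ there, and conclude by Riesz, taking the minimal solution. The paper instead never decomposes: it proves the estimate $|(T\wedge\alpha,(d^c)^*(T\wedge\theta))|^2\le\|T\wedge\theta\|^2\,Q(\alpha,\alpha)$ with the full form $Q(\alpha,\alpha)=\|d(T\wedge\alpha)\|^2+\|d^*(T\wedge\alpha)\|^2$, applies Riesz on the $Q$-completion to solve the Laplace-type equation $(dd^*+d^*d)(T\wedge\beta)=(d^c)^*(T\wedge\theta)$, sets $T\wedge u:=d^*(T\wedge\beta)$, and then must separately prove $d^*d(T\wedge\beta)\equiv0$ (Lemma \ref{le:last-1}) by the same identity $d(d^c)^*+(d^c)^*d=0$ plus a completeness/cutoff argument resting on the appendix estimate \eqref{eq:estimate}. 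Both routes hinge on exactly the same two cited inputs, the $T$-geometry Bochner--Kodaira--Nakano identity (Theorem 4.8 in \cite{Wang17}) and the $T$-K\"ahler identity; completeness enters for you through Gaffney-type density in the graph norms of $d$, $d^*$, $d^c$, while for the paper it enters through the cutoff vanishing lemma. What each buys: your route produces the minimal solution and the sharp constant $1$ transparently, but requires setting up closed extensions and density for several operators and handling the fact that $w$ need not be $L^2$ (the compatibility $\langle w,\gamma_2\rangle=0$ on $\ker d^*$ has to be read through the distributional identity $\langle T\wedge\theta,d^c\gamma\rangle=\langle\Lambda(T\wedge\theta),d^*\gamma\rangle$ on test forms plus density); the paper's variational route avoids all domain bookkeeping at the cost of the extra vanishing lemma.

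Two corrections to how you justify the key steps. First, the Bochner--Kodaira--Nakano identity you postulate with a residual operator $\mathcal R$ is in fact \emph{exact}: since $dT=d\omega=0$, one has $\|d(T\wedge\alpha)\|^2+\|d^*(T\wedge\alpha)\|^2=\|d^c(T\wedge\alpha)\|^2+\|(d^c)^*(T\wedge\alpha)\|^2$ with $\mathcal R=0$; your proposed fallback that Timorin's positivity (Theorem \ref{th: T-metric}) would force $\mathcal R\ge0$ misreads its role --- that theorem is not a curvature sign, it is what makes $\|\cdot\|_{T,\omega}$ positive definite and hence underwrites the Hilbert space you work in at all. Second, the uniform equivalence of the $T$-Hodge norm with the $\hat\omega$-norm under \eqref{eq:equ-norm}, which you assert in passing and need both for Gaffney density and for the cutoff errors, is true but not immediate: it is the content of Lemma \ref{le:c2} in the appendix, proved by a compactness argument over $U(n)\times[1/C,C]^n$. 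A minor point confirming your own remark that the theorem's content is the constant $1$: the trivial solution $\Lambda(T\wedge\theta)$ does not satisfy the desired bound in general, since on the $\omega$-component of the Lefschetz decomposition its norm exceeds that of $T\wedge\theta$ by a factor up to $\sqrt m$.
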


\begin{proof}  The proof contains two steps.

\medskip

\emph{Step 1}: "A prior estimate"
\begin{equation}\label{eq:claim1}
|(T\wedge \alpha, (d^c)^* (T\wedge \theta))|^2\leq ||T\wedge \theta||^2Q(\alpha, \alpha), 
\end{equation}
for every smooth $1$-form $\alpha$ with compact support in $X$, where
$$
Q(\alpha, \alpha):=||d(T\wedge \alpha)||^2+ ||d^*(T\wedge \alpha )||^2.
$$

\medskip

\emph{Proof of Step 1}: Since $$(T\wedge\alpha , (d^c)^* (T\wedge \theta))=(d^c(T\wedge\alpha ), T\wedge\theta),$$ it suffices to show the following $T$-geometry version of the Bochner-Kodaira-Nakano identity
$$
||d(T\wedge \alpha )||^2+ ||d^*(T\wedge\alpha)||^2=||d^c(T\wedge\alpha)||^2+ ||(d^c)^*(T\wedge\alpha)||^2,
$$
which is a special case of Theorem 4.8 in \cite{Wang17}.

\medskip 

\emph{Step 2}: By \emph{Step 1}, we know that 
$$
F: \alpha \mapsto (T\wedge\alpha, (d^c)^* (T\wedge\theta)),
$$
is $Q$-bounded by $||T\wedge\theta||$. Thus $F$ extends to  a bounded linear functional on the $Q$-completion, say $H$, of the space of smooth $1$-forms with compact support in $X$. The Riesz representation theorem gives $\beta\in H$ with
\begin{equation}\label{eq:es}
Q(\beta, \beta) \leq ||T\wedge\theta||^2,
\end{equation}
such that
\begin{equation}\label{eq:esq1}
Q(\alpha, \beta)=F(\alpha)=(T\wedge\alpha, (d^c)^* (T\wedge\theta)), 
\end{equation}
for every smooth $1$-form $\alpha$ with compact support in $X$, where
\begin{equation}\label{eq:esq2}
Q(\alpha, \beta)=(d(T\wedge\alpha), d(T\wedge\beta ))+ (d^*(T\wedge \alpha), d^*(T\wedge\beta)).
\end{equation}
Since $H$ is a subspace of the space of currents, we have
\begin{equation}\label{eq:esq3}
Q(\alpha, \beta)=(T\wedge\alpha , (dd^*+d^*d)(T\wedge\beta )).
\end{equation}
Thus \eqref{eq:esq1} and \eqref{eq:esq3} together give
$$
(dd^*+d^*d)(T\wedge \beta )=(d^c)^* (T\wedge\theta),
$$
in the sense of current. Let us define $u$ such that $T\wedge u= d^*(T\wedge\beta)$. Since $dd^*+d^* d$ is elliptic, we know that $\beta$ is smooth. Thus $u$ is smooth. Notice that \eqref{eq:es} gives
$$||T\wedge u|| \leq ||T\wedge\theta||,$$ Thus   it suffices to prove the following identity.
\end{proof}

\begin{lemma}\label{le:last-1} $d^*d(T\wedge \beta ) \equiv 0$.
\end{lemma}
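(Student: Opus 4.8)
The plan is to deduce the identity from the current equation $(dd^*+d^*d)(T\wedge\beta)=(d^c)^*(T\wedge\theta)$ established just above the lemma, together with the completeness of $(X,\hat\omega)$. Recall that $\beta$ is already known to be smooth by elliptic regularity, and that \eqref{eq:es} gives $||d(T\wedge\beta)||^2\leq Q(\beta,\beta)\leq ||T\wedge\theta||^2<\infty$, so that $\eta:=d(T\wedge\beta)$ is a smooth, square-integrable form in $V_T^2$. Since $d^*d(T\wedge\beta)=d^*\eta$, it suffices to prove $d^*\eta=0$.

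First I would differentiate the current equation. Applying $d$ and using $d^2=0$ gives $dd^*d(T\wedge\beta)=d(d^c)^*(T\wedge\theta)$, i.e. $dd^*\eta=d(d^c)^*(T\wedge\theta)$. The right-hand side vanishes: the K\"ahler identities of Timorin's $T$-Hodge theory (the same package that yields the Bochner-Kodaira-Nakano identity, Theorem 4.8 in \cite{Wang17}, already invoked in Step 1 of the proof above) provide the anticommutation $d(d^c)^*+(d^c)^*d=0$, whence $d(d^c)^*(T\wedge\theta)=-(d^c)^*d(T\wedge\theta)=0$, because $d(T\wedge\theta)=dT\wedge\theta+T\wedge d\theta=0$ (both $T$ and $\theta$ being $d$-closed). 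Thus $\eta$ is a smooth $L^2$ form satisfying $dd^*\eta=0$.

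It remains to integrate by parts to pass from $dd^*\eta=0$ to $d^*\eta=0$, and this is where completeness enters; it will be the main obstacle. I would fix a cutoff $\chi_R:=\chi(\rho/R)$, with $\chi$ smooth, $\chi\equiv 1$ on $[0,1]$ and $\chi\equiv 0$ on $[2,\infty)$, so that $\chi_R$ has compact support (as $\rho$ is proper) and, by $|d\rho|_{\hat\omega}\leq 1$ together with \eqref{eq:equ-norm} (which makes the $T$-Hodge theory norm comparable to the $\hat\omega$-norm in each degree), one has $|d\chi_R|\leq C/R$ for a fixed constant $C$. Writing $I_R:=\int_X \chi_R^2\,|d^*\eta|^2$ and integrating by parts against the compactly supported form $\chi_R^2\,d^*\eta$,
$$I_R=(d(\chi_R^2\,d^*\eta),\eta)=(\chi_R^2\,dd^*\eta,\eta)+(d(\chi_R^2)\wedge d^*\eta,\eta)=(d(\chi_R^2)\wedge d^*\eta,\eta),$$
since $dd^*\eta=0$. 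Cauchy-Schwarz and $|d(\chi_R^2)|=2\chi_R|d\chi_R|\leq 2\chi_R C/R$ then give $I_R\leq (2C/R)\,I_R^{1/2}\,||\eta||$, hence $I_R^{1/2}\leq (2C/R)\,||\eta||$. Letting $R\to\infty$ and using monotone convergence ($\chi_R^2\nearrow 1$), we obtain $||d^*\eta||^2=\lim_{R\to\infty} I_R=0$, so $d^*\eta=d^*d(T\wedge\beta)\equiv 0$, as claimed. The only delicate points are the algebraic identity $d(d^c)^*=-(d^c)^*d$ in the $T$-Hodge calculus and the uniform bound $|d\chi_R|\leq C/R$ measured in the $T$-Hodge norm, both of which rest on \eqref{eq:equ-norm} and the completeness hypothesis.
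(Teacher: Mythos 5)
Your proof is correct and takes essentially the same route as the paper: the same use of the $T$-K\"ahler anticommutation $d(d^c)^*=-(d^c)^*d$ together with $d(T\wedge\theta)=0$ to get $dd^*d(T\wedge\beta)=0$, followed by the same cutoff integration by parts (your identity for $I_R$ is precisely \eqref{eq:equi} with $\varepsilon=1/R$, and your bound $\|\eta\|\leq\|T\wedge\theta\|$ is \eqref{eq:es}). The one ingredient you assert rather than prove --- that wedging with $d\chi_R$ is controlled in the $T$-Hodge norm thanks to \eqref{eq:equ-norm} --- is exactly what the paper's appendix supplies via the two-sided comparison of Lemma \ref{le:c2} and the resulting estimate \eqref{eq:estimate}, so you have correctly located where the remaining work lies.
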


\begin{proof} The $T$-K\"ahler identity $(d^c)^*=[d,\Lambda]$ (see section 4 in \cite{Wang17}) implies that $$d(d^c)^*+(d^c)^*d=0.$$ Thus
$$
d(d^c)^*(T\wedge \theta)=-(d^c)^*d(T\wedge \theta)=0.
$$
Now we have
$$
dd^* d(T\wedge \beta) \equiv 0.
$$
Since $\hat \omega$ is complete, there exists a smooth exhaustion function, say $\rho$, on $X$ such that
\begin{equation}\label{eq:db}
|d\rho|_{\hat \omega} \leq 1. 
\end{equation}
Let  $0\leq \chi \leq 1$ be a smooth function on $\mathbb R$ such that $\chi\equiv 1$ on $(-\infty, 1)$ and $\chi\equiv 0$ on $(2, \infty)$. Then for each $\varepsilon>0$, $\chi(\varepsilon \rho)$ is a smooth function with compact support. Since
\begin{equation}\label{eq:chen}
(\chi^2(\varepsilon b ) dd^*d (T\wedge\beta),  d(T\wedge\beta))=0,
\end{equation}
and
$$
\chi^2(\varepsilon b ) dd^*d (T\wedge\beta)= d(\chi^2(\varepsilon b )d^*d (T\wedge\beta))- 2 d(\chi(\varepsilon b))\wedge \chi (\varepsilon b)d^*d(T\wedge\beta),
$$
we have
\begin{equation}\label{eq:equi}
||\chi(\varepsilon b) d^*d(T\wedge \beta)||^2=2 (d(\chi(\varepsilon b))\wedge \chi (\varepsilon b)d^*d(T\wedge \beta), d(T\wedge\beta)). 
\end{equation}
Thus Lemma \ref{le:last-1} follows from the following estimate
\begin{equation}\label{eq:estimate}
\lim_{\varepsilon\to 0} ||d(\chi(\varepsilon b))\wedge \chi (\varepsilon b)d^*d(T\wedge \beta)||=0.
\end{equation} 
The above estimate is easily seen to be true in case $T=1$, see \cite{CWW}. The general case will be proved in the appendix.
\end{proof}

\subsection{Proof of Lemma \ref{le:3-new}}

By Lemma \ref{le:2-new}, we have
$$
d\left(  T\wedge(E_\mu(G)-G) \right)=d\Lambda(T\wedge \theta)=[d,\Lambda] (T\wedge\theta),
$$
By the K\"ahler identity in $T$-Hodge theory (section 4 in \cite{Wang17}), we have $[d,\Lambda]=(d^c)^*$, thus  $T\wedge(E_\mu(G)-G)$ is a solution of 
$$
d(\cdot)=(d^c)^*(T\wedge \theta).
$$ 
Notice that $T\wedge(E_\mu(G)-G)$ is perpendicular to $\ker d$, thus it is also the $L^2$-minimal solution. By \eqref{eq:equ-norm}, for every fixed $0<t<1$, $\omega=t\alpha_1+(1-t)\alpha_2$ is complete. Apply Theorem \ref{th:Hormander-new} to the case $\hat\omega=\omega$, Lemma \ref{le:3-new} follows.

\section{Proof of the Alexandrov-Fenchel inequality}

\begin{lemma}\label{le:complete} Put
$$
\psi(x)=\sum_{j=1}^n \log\frac{1}{1+(x^j)^2}  +C \log(1+e^{x^j}), \ C:= 4(1+e^{\sqrt{3}})^2 e^{\sqrt{3}}.
$$
Then $\psi$ is strictly convex on $\mathbb R^n$ and  $\nabla \psi(\mathbb R^n) \subset (-1,C+1)^n$. Moreover, if we look at $\psi$ as a function on $\mathbb R^n \times {\mathbb T}^n$ then $dd^c\psi$ is complete K\"ahler on $\mathbb R^n \times {\mathbb T}^n$. 
\end{lemma}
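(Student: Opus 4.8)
The plan is to exploit the product structure of $\psi$. Writing $g(s):=-\log(1+s^2)+C\log(1+e^s)$, we have $\psi(x)=\sum_{j=1}^n g(x^j)$, so the Hessian is diagonal, $\psi_{jk}=g''(x^j)\delta_{jk}$, and all three assertions reduce to one-variable statements about $g$. A direct computation gives
\[
g''(s)=\frac{2(s^2-1)}{(1+s^2)^2}+C\,\frac{e^s}{(1+e^s)^2},\qquad g'(s)=-\frac{2s}{1+s^2}+C\,\frac{e^s}{1+e^s}.
\]
First I would prove strict convexity, i.e. $g''>0$, by splitting at $\pm\sqrt3$. For $|s|\ge\sqrt3$ the first term of $g''$ is $\ge0$ and the second is $>0$. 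For $|s|\le\sqrt3$ I would bound the first term below by its global minimum $-2$ (attained at $s=0$) and the second below by $C\,e^{-\sqrt3}/(1+e^{\sqrt3})^2=4$, using $e^s\ge e^{-\sqrt3}$ and $(1+e^s)^2\le(1+e^{\sqrt3})^2$ on $[-\sqrt3,\sqrt3]$ together with the exact value of $C$; hence $g''\ge-2+4=2>0$ there. This is precisely the role of $C$: it is taken large enough to overwhelm the possible negativity of the $-\log(1+s^2)$ term on the central interval, and since the Hessian of $\psi$ is diagonal with these entries, $\psi$ is strictly convex on $\R^n$.

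For the gradient image I would simply read off the ranges of the two summands of $g'$: an elementary calculation shows $-2s/(1+s^2)\in[-1,1]$ (with extrema $\pm1$ at $s=\mp1$), while $C\,e^s/(1+e^s)\in(0,C)$. Since the second summand is strictly positive and strictly below $C$, one gets $g'(s)\in(-1,C+1)$ for every $s$, whence $\nabla\psi(\R^n)=\{(g'(x^1),\dots,g'(x^n)):x\in\R^n\}\subset(-1,C+1)^n$.

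The substantive step is completeness. Viewed on $\R^n\times\T^n$, the form $dd^c\psi$ is $d$-closed and, by strict convexity, positive, hence Kähler, with associated metric $\sum_j g''(x^j)\,|dz^j|^2$. The key quantitative input I would extract from the computation above is the global lower bound $g''(s)\ge\frac{1}{1+s^2}$: for $|s|\ge\sqrt3$ this follows from $\frac{2(s^2-1)}{1+s^2}=2-\frac{4}{1+s^2}\ge1$, which is exactly the inequality $|s|\ge\sqrt3$, and for $|s|\le\sqrt3$ from $g''\ge2\ge\frac{1}{1+s^2}$. To verify completeness in the sense of the paper's definition I would then exhibit an exhaustion function: set $\rho_j(s):=\operatorname{arcsinh}(s)=\int_0^s(1+\sigma^2)^{-1/2}\,d\sigma$, so that $(\rho_j')^2=\frac{1}{1+s^2}\le g''$, and put $\rho:=\big(\sum_{j}\rho_j(x^j)^2\big)^{1/2}$. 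Since $\rho$ does not depend on the $\T^n$-variables and the metric is diagonal, $|d\rho|_{\hat\omega}^2=\rho^{-2}\sum_j\rho_j^2\,(\rho_j')^2/g''(x^j)\le\rho^{-2}\sum_j\rho_j^2=1$, and $\rho^{-1}([0,c])$ is compact because each $|\operatorname{arcsinh}(x^j)|$, hence each $|x^j|$, is bounded on it while $\T^n$ is compact.

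The main obstacle is this completeness argument, and within it the only genuinely delicate point is the uniform bound $g''(s)\ge\frac{1}{1+s^2}$: it is the balance between the concave contribution of $-\log(1+s^2)$ near the origin and the convex contribution of $C\log(1+e^s)$ that both forces the specific shape of $C$ and pins down $\sqrt3$ as the natural splitting point. Once this bound is in hand, the exhaustion $\rho$ and the estimate $|d\rho|_{\hat\omega}\le1$ are routine; the only cosmetic nuisance is the non-smoothness of $\rho$ at the origin, which is harmless for completeness and can be removed by replacing $\sum_j\rho_j^2$ with $1+\sum_j\rho_j^2$ or by any smoothing supported on a compact set.
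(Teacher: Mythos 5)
Your proposal is correct and follows essentially the same route as the paper: the same splitting of $g''$ at $|s|=\sqrt{3}$ with the same role for $C$ (yielding the identical key bound $g''(s)\ge (1+s^2)^{-1}$, i.e. $dd^c\psi \ge \sum_j (1+(x^j)^2)^{-1}\, dx^j\wedge dy^j$), and the same elementary estimates $|2s|\le 1+s^2$ and $0<e^s/(1+e^s)<1$ for $\nabla\psi(\mathbb{R}^n)\subset(-1,C+1)^n$. The only difference is cosmetic: the paper certifies completeness with the exhaustion $\log(1+|x|^2)$, whose $g$-gradient it bounds by $n$ via $|dx^j|_g\le\sqrt{1+(x^j)^2}$ (so one rescales by $1/n$), whereas you use the $\operatorname{arcsinh}$-based radius, smoothed to $\bigl(1+\sum_j \operatorname{arcsinh}^2(x^j)\bigr)^{1/2}$, which achieves $|d\rho|_{\hat\omega}\le 1$ exactly --- both are equally routine once the key pointwise bound is in hand.
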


\begin{proof} A direct computation gives
\begin{equation}\label{eq: c-1}
\left(\log\frac{1}{1+(x^j)^2} \right)_{x^j} =\frac{-2x^j}{1+(x^j)^2}, 
\end{equation}
and
\begin{equation}\label{eq: c-2}
\left(\log\frac{1}{1+(x^j)^2} \right)_{x^jx^j} =\frac{2(x^j)^2-2}{(1+(x^j)^2)^2} \geq \frac{1}{1+(x^j)^2}, \ \ \text{if} \ (x^j)^2\geq 3.
\end{equation}
Since $\log (1+e^x)$ is convex, the above inequality gives
$$
\psi_{x^jx^j} \geq  \frac{1}{1+(x^j)^2} \ \ \text{if} \ (x^j)^2 \geq  3. 
$$
We also have
\begin{equation}\label{eq: c-3}
\left(\log(1+e^{x^j}) \right)_{x^jx^j} =\frac{e^{x^j}}{(1+e^{x^j})^2} \geq \frac{e^{-\sqrt{3}}}{(1+e^{\sqrt{3}})^2}, \ \ \text{if} \ (x^j)^2\leq 3.
\end{equation}
Thus
\begin{equation}\label{eq: c-4}
C \left(\log(1+e^{x^j}) \right)_{x^jx^j} \geq 4 \geq \frac{4}{1+(x^j)^2}, \ \ \text{if} \ (x^j)^2\leq 3, 
\end{equation}
which gives
$$
\psi_{x^jx^j} \geq  \frac{4}{1+(x^j)^2} + \frac{2(x^j)^2-2}{(1+(x^j)^2)^2} \geq \frac{2}{1+(x^j)^2} \ \ \text{if} \ (x^j)^2\leq 3. 
$$
Notice that $\psi_{x^jx^k}=0$ if $j\neq k$. Thus $\psi$ is strictly convex and 
$$
dd^c\psi \geq \sum_{j=1}^n \frac{1}{1+(x^j)^2}  dx^j \wedge dy^j, 
$$
on $\mathbb R^n \times {\mathbb T}^n$. Denote by $g$ the associated Riemannian metric of $dd^c\psi$, then we have
$$
g\geq g_0:=\sum_{j=1}^n  \frac{1}{1+(x^j)^2}  (dx^j 
\otimes dx^j +dy^j\otimes dy^j).
$$
Thus
$$
|dx^j|_g \leq |dx^j|_{g_0}=\sqrt{1+(x^j)^2}.
$$
Since $d\log (1+|x|^2)=\sum_{j=1}^n \frac{2x^j dx^j}{1+|x|^2}$, we have
$$
|d\log (1+|x|^2)|_{g} \leq \sum_{j=1}^n \frac{2|x^j|}{1+|x|^2}|dx^j|_g \leq \sum_{j=1}^n \frac{2|x^j|}{1+|x|^2} \sqrt{1+(x^j)^2} \leq n. 
$$
Notice that $\log(1+|x|^2)$ is an exhaustion function on $\mathbb R^n \times {\mathbb T}^n$, the above inequality implies that $dd^c\psi$ is complete K\"ahler. $\nabla \psi(\mathbb R^n) \subset (-1, C+1)^n$ follows from
$$
\psi_{x^j}= \frac{-2x^j}{1+(x^j)^2} + C\frac{e^{x^j}}{1+e^{x^j}},  \ 2|x_j| \leq 1+(x^j)^2, \ 0 < \frac{e^{x^j}}{1+e^{x^j}} <1. 
$$
The proof is complete. 
\end{proof}

We shall use our main theorem and the above lemma to prove Theorem \ref{th:AF-m-c}, which implies the Alexandrov-Fenchel inequality.

\subsection{Proof of Theorem \ref{th:AF-m-c}} 

Put
$$
\tilde \phi= \psi+\phi_1+\phi_2+\phi_{m+1}+\cdots+\phi_n.
$$
The above lemma implies that $\hat \omega:=dd^c\tilde \phi$
is complete on $\mathbb R^n \times {\mathbb T}^n$ and $dd^c
\phi_j \leq \hat \omega$ for each $j$. Moreover, by the above lemma, $\nabla\psi(\mathbb R^n)$ is bounded, thus $\nabla\tilde\phi(\mathbb R^n)$ is bounded and $(X,\hat\omega)$ has finite volume. We know that Theorem \ref{th:AF-m-c} follows from Theorem \ref{th:main}.

\section{Appendix}

\subsection{Compare the $T$-Hodge theory norm with the usual norm}

For every smooth $k$-form $u$, $0\leq k\leq m$, on $X$, let us define $|u|^2_{T,\omega}$ such that
$$
u\wedge \overline{ *(T\wedge u)}=|u|^2_{T,\omega} \frac{\omega^m}{m!} \wedge T.
$$
where $*$ denotes the Hodge star operator on $V_T$, see Definition \ref{de:hodge-star}.

\begin{definition} We call $|u|_{T,\omega}$ the pointwise $T$-norm of $u$.
\end{definition}

\begin{lemma}\label{le:c1} Let $|T\wedge u|_\omega$ be the usual pointwise norm of $T\wedge u$ with respect to $\omega$. If $T=\omega^{n-m}$ then 
$$
\frac{n!(n-m)!)}{m!} |u|^2_{T,\omega} \leq |T\wedge u|^2_{\omega} \leq \frac{(n!)^2}{(m!)^2}  |u|^2_{T,\omega}.
$$
\end{lemma}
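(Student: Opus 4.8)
The plan is to reduce everything to pointwise linear algebra and then to the classical Kähler (Weil) formulas, which become available precisely because $T=\omega^{n-m}$. Both $|u|_{T,\omega}$ and $|T\wedge u|_\omega$ are pointwise quantities, so I would fix a point $x\in X$ and choose coordinates in which $\omega$ is the standard flat Kähler form on the cotangent space; all subsequent manipulations take place in the exterior algebra at $x$. Since $T=\omega^{n-m}$, the form $T\wedge u=\omega^{n-m}\wedge u$ lies in the genuine Lefschetz tower of the single form $\omega$, so the operators $*_s$, $*$ of Definitions \ref{de:lef-star}--\ref{de:hodge-star} reduce (after the degree shift by $T$) to the classical Lefschetz and Hodge star operators attached to $\omega$, and the $T$-Lefschetz decomposition of $T\wedge u$ agrees with the classical one.

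First I would invoke the Lefschetz decomposition $u=\sum_{r\ge 0}L_r u_r$, where $u_r$ is primitive of degree $k-2r$ (so $0\le r\le m$ in the admissible range). The crucial reduction is that both norms diagonalize over this decomposition: for the $T$-norm this is exactly the computation in the proof of Theorem \ref{th: T-metric}, where $u\wedge\overline{*(T\wedge u)}$ is shown to be a sum of the diagonal terms $(-1)^{[k-2r]}L_rL_{m+r-k}(T\wedge u_r)\wedge\overline{Ju_r}$ with all cross terms vanishing; for the classical norm it is the orthogonality of the Lefschetz decomposition, applied to $T\wedge L_r u_r=L^{\,n-m+r}u_r/r!$, whose primitive parts have distinct degrees. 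Hence it suffices to prove the two inequalities for a single primitive piece $u=L_r v$, with $v$ primitive of degree $j=k-2r$, and to sum afterwards.

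For such a piece I would compute both sides explicitly. Definition \ref{de:lef-star} (the primitive form being $T\wedge v$ of degree $j$, at Lefschetz level $r$) gives $*(T\wedge L_r v)=(-1)^{[j]}L_{m-r-j}(T\wedge Jv)$; wedging with $L_r v$ and using the classical Weil identity $(-1)^{[j]}\,v\wedge\overline{Jv}\wedge\omega^{n-j}=\tfrac{(n-j)!}{n!}\,|v|^2_\omega\,\omega^n$ yields $|L_r v|^2_{T,\omega}=\tfrac{m!\,(n-j)!}{n!\,r!\,(m-r-j)!}\,|v|^2_\omega$. The same Weil identity applied to $T\wedge L_r v=L^{\,n-m+r}v/r!$ gives $|T\wedge L_r v|^2_\omega=\tfrac{(n-m+r)!\,(n-j)!}{(r!)^2\,(m-r-j)!}\,|v|^2_\omega$. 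Dividing, all dependence on $j$ and on $|v|_\omega$ cancels, and the ratio collapses to
$$
\frac{|T\wedge L_r v|^2_\omega}{|L_r v|^2_{T,\omega}}=\frac{n!}{m!}\cdot\frac{(n-m+r)!}{r!}=:c_r .
$$
Since $\tfrac{(n-m+r)!}{r!}=(r+1)(r+2)\cdots(r+n-m)$ is a product of $n-m$ factors each increasing in $r$, the quantity $c_r$ is monotone increasing. The admissible values $0\le r\le m$ then force $c_0\le c_r\le c_m$, i.e. $\tfrac{n!(n-m)!}{m!}\le c_r\le\tfrac{(n!)^2}{(m!)^2}$, with the lower bound attained at $r=0$. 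Multiplying through by $|L_r v|^2_{T,\omega}$ and summing over the orthogonal pieces finishes the proof.

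The main obstacle I expect is not conceptual but the careful bookkeeping of the combinatorial constants: matching the normalizations $L_p=L^p/p!$, the sign $(-1)^{[k]}$, and the volume factor $\tfrac{\omega^m}{m!}\wedge T=\tfrac{\omega^n}{m!}$ against the classical Weil factor $\tfrac{\omega^n}{n!}$, together with verifying that the cross terms genuinely vanish so that both norms really do diagonalize. One should also note that the upper bound $\tfrac{(n!)^2}{(m!)^2}=c_m$ remains valid even though $r$ never actually reaches $m$ in the decomposition; this is immediate from the monotonicity of $c_r$ together with $r\le m$.
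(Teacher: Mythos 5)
Your proof is correct and follows essentially the same route as the paper's: both identify $T$-primitivity with classical $\omega$-primitivity when $T=\omega^{n-m}$, expand $T\wedge u$ in the Lefschetz decomposition, and compare the $T$-Hodge star of Definition \ref{de:lef-star} with the classical Hodge star (Weil formula) term by term. The only difference is that you carry out explicitly the constant bookkeeping the paper leaves implicit, computing the per-piece ratio $c_r=\frac{n!}{m!}\cdot\frac{(n-m+r)!}{r!}$ and using its monotonicity in $r$, which makes transparent where the constants $\frac{n!(n-m)!}{m!}$ (at $r=0$) and $\frac{(n!)^2}{(m!)^2}$ (at $r=m$) come from.
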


\begin{proof} By Definition \ref{de:primitive}, if $T=\omega^{n-m}$ then a form $T\wedge v\in V_T^k$ is primitive in $T$-Hodge theory if and only if $v$ is primitive with respect to $\omega$ in the usual sense. Let
$$
T\wedge u:=\sum_{r=0}^j L_r  (T\wedge u_r)=\sum_{r=0}^j L_{n-m+r} u'_r, \ u'_r:=\frac{(n-m+r)!}{r!} u_r,
$$
be the Lefschetz decomposition of $T\wedge u$. Then Definition \ref{de:hodge-star} gives
$$
*(T\wedge u)=\sum_{r=0}^j (-1)^{[k-2r]} L_{m-k+r} (T\wedge Ju_r).
$$
Moreover,
$$
\star (T\wedge u)=\sum_{r=0}^j  (-1)^{[k-2r]}L_{m-k+r} (Ju'_r),
$$
where $\star$ denotes the usual Hodge star operator. Recall that 
$$
T\wedge u \wedge \overline{\star (T\wedge u)}=|T\wedge u|^2_{\omega}\frac{\omega^n}{n!}.
$$
Thus the lemma follows.
\end{proof}

For general $T=\alpha_{m+1}\wedge \cdots \wedge \alpha_n$, we have:

\begin{lemma}\label{le:c2} Assume that \eqref{eq:equ-norm} is true. Then there exists a constant $C_1$ that only depends on $C, n,m$ such that
$$
C_1^{-1} |u|_{T,\hat \omega} \leq |T\wedge u|_{\hat \omega} \leq C_1  |u|_{T,\hat \omega}.
$$
\end{lemma}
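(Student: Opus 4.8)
The plan is to read the asserted inequality as a purely pointwise statement in linear algebra and then extract a uniform constant by compactness. First I would fix a point $x\in X$ and choose a coframe that is unitary for $\hat\omega$, so that in these coordinates $\hat\omega$ becomes the standard Hermitian form on a fixed complex vector space $W=T_x^*X$ of dimension $n$ that does not depend on $x$. In this frame each $\alpha_j$ is represented by a Hermitian matrix $H_j$, and hypothesis \eqref{eq:equ-norm} reads $C^{-1}I\leq H_j\leq CI$, so the tuple $(H_{m+1},\dots,H_n)$ ranges over the compact set
$$
\mathcal K:=\{(H_{m+1},\dots,H_n): C^{-1}I\leq H_j\leq CI\}.
$$
Both $|u|_{T,\hat\omega}$ and $|T\wedge u|_{\hat\omega}$ are built entirely out of this tuple, the standard $\hat\omega$, and the $k$-form $u$, so the whole problem descends to a statement on the single finite-dimensional space $\Lambda^k W$ with parameters running over $\mathcal K$.

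Next I would verify that, for each fixed tuple in $\mathcal K$ and each $0\leq k\leq m$, both $u\mapsto|u|_{T,\hat\omega}$ and $u\mapsto|T\wedge u|_{\hat\omega}$ are genuine (positive-definite) norms on $V^k$. The usual norm $|T\wedge u|_{\hat\omega}$ is positive definite as soon as $u\mapsto T\wedge u$ is injective, which holds pointwise by the isomorphism underlying Theorem \ref{th: T-HL}, since wedging with a product of positive $(1,1)$-forms is injective in this degree range. The pointwise $T$-norm $|u|_{T,\hat\omega}$ is positive definite by the pointwise strict positivity behind Timorin's mixed Hodge-Riemann relation, i.e. the positivity exploited in the proof of Theorem \ref{th: T-metric} (together with the star operator of Definition \ref{de:hodge-star}). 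Both quantities are continuous in $u$ and in the parameters $(H_{m+1},\dots,H_n)$.

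Finally I would run the compactness argument. By homogeneity in $u$ it suffices to control the ratio $|T\wedge u|_{\hat\omega}^2/|u|_{T,\hat\omega}^2$ on the product $\mathcal K\times S$, where $S$ is the unit sphere of $\Lambda^k W$ for a fixed background norm. This set is compact, the ratio is a continuous and strictly positive function on it (the denominator never vanishes by the previous step), so it attains a positive minimum $m_0$ and a finite maximum $M_0$ depending only on $C,n,m$ and the fixed dimension data. Setting $C_1:=\max\bigl(M_0^{1/2},m_0^{-1/2}\bigr)$ then yields the two-sided bound uniformly in $x$, because the reduced problem no longer sees the point $x$.

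The hard part will be securing uniform non-degeneracy of the $T$-norm over the whole family $\mathcal K$ rather than for one fixed $T$: Theorem \ref{th: T-metric} must be invoked for every admissible tuple, and one must confirm that the Lefschetz decomposition and the operator $*$ of Definition \ref{de:hodge-star} depend continuously on $(H_{m+1},\dots,H_n)$, so that $|u|_{T,\hat\omega}$ remains a continuous, strictly positive function of the parameters and the compactness conclusion genuinely applies; everything else is the standard equivalence of norms on a finite-dimensional space.
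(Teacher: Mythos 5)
Your argument is correct, and its engine --- freeze a point, normalize $\hat\omega$, observe that \eqref{eq:equ-norm} confines the tuple of forms defining $T$ to a compact parameter set, and extract a uniform constant by continuity, positivity and compactness --- is the same as the paper's, which parametrizes the admissible forms by $U(n)\times[1/C,C]^n$ via the eigenvalue decomposition $A=OBO^*$ instead of your matrix set $\mathcal K$ (the two parametrizations have the same compact image). Where you genuinely diverge is in what gets compared: the paper does not compare $|u|_{T,\hat\omega}$ with $|T\wedge u|_{\hat\omega}$ in one stroke, but anchors at $T_0:=\hat\omega^{n-m}$, first proving the explicit model-case comparison of Lemma \ref{le:c1} (by matching the $T$-Hodge Lefschetz decomposition with the classical one, which yields computable constants) and then using compactness and connectedness of the parameter space to compare the varying norms with those at $T_0$. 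Your one-shot minimization of the ratio $|T\wedge u|^2_{\hat\omega}/|u|^2_{T,\hat\omega}$ over $\mathcal K\times S$ dispenses with Lemma \ref{le:c1} altogether, at the price of certifying non-degeneracy of both norms across the whole family, which you do correctly: injectivity of $u\mapsto T\wedge u$ in degrees $k\leq m$ from the pointwise statement behind Theorem \ref{th: T-HL}, and strict positivity of $|u|_{T,\hat\omega}$ from the pointwise Timorin positivity invoked in the proof of Theorem \ref{th: T-metric}. The continuity issue you flag at the end is precisely what the paper leaves implicit, and it does go through: since the mixed hard Lefschetz maps are isomorphisms for \emph{every} tuple in $\mathcal K$, the primitivity conditions cut out subspaces of constant dimension depending continuously on the parameters, the primitive components of the Lefschetz decomposition are obtained by inverting continuously varying isomorphisms, and hence $*_s$, the operator $*$ of Definition \ref{de:hodge-star}, and $|u|^2_{T,\hat\omega}$ are continuous on $\mathcal K\times\Lambda^k W$. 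What each approach buys: the paper's detour through Lemma \ref{le:c1} gives explicit constants in the model case and uses connectedness as a device to propagate definiteness from $T_0$ through the family, whereas your direct citation of Timorin's positivity renders that unnecessary and also avoids the tacit extra step of comparing $|T\wedge u|_{\hat\omega}$ with $|T_0\wedge u|_{\hat\omega}$, which the anchored reduction would still require; just remember to take the maximum of the finitely many constants over the degrees $0\leq k\leq m$ at the end, as your sketch implicitly does.
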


\begin{proof} By Lemma \ref{le:c1}, it suffices to compare $|u|^2_{T,\hat \omega}$ with $|u|^2_{T_0,\hat \omega}$, where $T_0:={\hat \omega}^{n-m}$. Fix an arbitrary point, say $z_0$, in $X$, let us choose local coordinates, say $\{z^j\}$, near $z_0$ such that
$$
 \hat \omega(z_0)=i \sum_{j=1}^n  dz^j \wedge d\bar z^j. 
$$
With respect to the local coordinates $\{z^j\}$, we can identify the space of positive $(1,1)$-forms at $z_0$ with the space of positive definite $n$ by $n$ Hermitian matrices. We know that every positive definite $n$ by $n$ Hermitian matrix can be written as
$$
A=O B O^*, \ OO^*=I_n,
$$
where $O^*$ denotes the conjugate transpose of $O$, $I_n$ is the identity matrix and $B$ is a diagonal matrix with positive eigenvalues. Moreover, 
$$
\frac{\omega(z_0)}{C} \leq \hat \omega(z_0) \leq C \omega(z_0)
$$
if and only if each eigenvalue of the associated matrix of  $ \omega(z_0)$ lies in $[1/C, C]$. Consider 
$$
V:= U(n)\times [1/C, C]^n,
$$
where $U(n):=\{O: OO^*=I_n\}$ is the unitary group.  Every element, say $v=(O, \lambda_1, \cdots, \lambda_n)$, in $V$ represents a positive $(1,1)$-form, say $\omega^v$, at $z_0$ whose associated matrix is  
$$
O{\rm Diag}\{\lambda_1, \cdots, \lambda_n\} O^*.
$$ 
Consider the following map, say $F$,  from 
$$
V^{n-m}:=\underbrace{V\times\cdots \times V}_{n-m}
$$ 
to the space of Hermitian norms on $\wedge^{k} (\mathbb C\otimes T_{z_0}^*X)$, $0\leq k\leq m$, defined by
$$
(v^{m+1}, \cdots, v^n)\mapsto |\cdot|_{T, \hat \omega(z_0)}, \ T:= \omega^{v^{m+1}} \wedge \cdots \wedge \omega^{v^n}.
$$
The lemma follows since $V^{n-m}$ is compact and connected.
\end{proof}

\subsection{Proof of estimate \eqref{eq:estimate}}
Let us write $d^*d(T\wedge \beta)$ as $T\wedge \sigma$, where $\sigma$ is a one-form. Then
$$
 ||d(\chi(\varepsilon \rho))\wedge \chi (\varepsilon \rho)d^*d(T\wedge\beta)||^2=\int_{X} |d(\chi(\varepsilon \rho))\wedge \chi (\varepsilon \rho)\sigma|^2_{T, \hat \omega} \frac{\hat \omega^m}{m!} \wedge T.  
$$
By Lemma \ref{le:c2}, we have
$$
|d(\chi(\varepsilon \rho))\wedge \chi (\varepsilon \rho)\sigma|_{T, \hat \omega} \leq C_1 |d(\chi(\varepsilon \rho))\wedge \chi (\varepsilon \rho)d^*d(T\wedge\beta)|_{\hat \omega}.
$$
Since $|d\rho|_{\hat \omega} \leq 1$, we have
$$
|d(\chi(\varepsilon \rho))\wedge \chi (\varepsilon \rho)d^*d(\beta\wedge T)|_{\hat \omega} \leq \left(\varepsilon \sup|\chi'|\right)
|\chi (\varepsilon \rho)d^*d(T\wedge\beta)|_{\hat \omega}.
$$
Use Lemma \ref{le:c2} again, we get
$$
|d(\chi(\varepsilon \rho))\wedge \chi (\varepsilon \rho)\sigma|_{T, \hat \omega} \leq  \left(\varepsilon C_1^2 \sup|\chi'|\right)  |\chi (\varepsilon \rho)\sigma|_{T, \hat \omega}, 
$$
which gives
$$
 ||d(\chi(\varepsilon \rho))\wedge \chi (\varepsilon \rho)d^*d(T\wedge\beta)|| \leq \left( \varepsilon C_1^2 \sup|\chi'| \right)  || \chi (\varepsilon \rho)d^*d(T\wedge\beta)|| .
$$ 
By \eqref{eq:equi}, then we have
$$
 || \chi (\varepsilon \rho)d^*d(T\wedge\beta)||^2 \leq 2  \left( \varepsilon C_1^2 \sup|\chi'| \right)  || \chi (\varepsilon \rho)d^*d(T\wedge\beta)|| \cdot  ||T\wedge \theta ||,
$$
hence
$$
 || \chi (\varepsilon \rho)d^*d(T\wedge\beta)||\leq  \left( 2  \varepsilon C_1^2 \sup|\chi'| \right) ||T\wedge \theta||,
$$
which gives
$$
||d(\chi(\varepsilon \rho))\wedge \chi (\varepsilon \rho)d^*d(T\wedge \beta)|| \leq 2 ( \varepsilon C_1^2 \sup|\chi'|)^2 ||T\wedge \theta||,  
$$
thus \eqref{eq:estimate} follows.

\end{document}